\newcommand{\dd}{{\,\mathrm d}}
\newcommand{\si}{\sigma}
\renewcommand{\th}{\theta}
\newcommand{\la}{\lambda}
\newcommand{\ga}{\gamma}
\newcommand{\eps}{\varepsilon}
\renewcommand{\phi}{\varphi}
\newcommand{\scr}[1]{{\mathcal #1}}
\newcommand{\EE}{\mathbb{E}}
\newcommand{\PP}{\mathbb{P}}
\newcommand{\ind}{\mathbf{1}}
\newcommand{\RR}{\mathbb{R}}
\newcommand{\ZZ}{\mathbb{Z}}
\newcommand{\NN}{\text{I\!N}}
\newcommand{\bem}{{\begin{bmatrix}}}
\newcommand{\enm}{{\end{bmatrix}}}
\newcommand{\T}{{\prime}}
\newcommand{\tr}{\operatorname{tr}}
\newcommand{\Col}{\operatorname{Col}}
 \newcommand{\DD}{{\,\mathrm D}}
\newcommand{\R}{\RR}
 \newcommand{\rank}{\operatorname{rank}}
\theoremstyle{definition}
\newtheorem{thm}{Theorem}
\newtheorem{prop}[thm]{Proposition}
\newtheorem{lem}[thm]{Lemma}
\newtheorem{cor}[thm]{Corollary}
\newtheorem{rem}[thm]{Remark}
\newtheorem{ex}[thm]{Example}
\newtheorem{defn}[thm]{Definition}
\newtheorem{ass}[thm]{Assumption}
\numberwithin{thm}{section}
\newcommand{\expec}[1]{\ensuremath{{\rm E}\mspace{-1mu}\left[#1\right]}}
\newcommand{\rara}[1]{\renewcommand{\arraystretch}{#1}}
\newcommand{\e}{\mathrm{e}}
\newcommand{\di}{\triangle}
\newcommand{\Bm}{\begin{bmatrix}}
\newcommand{\Em}{\end{bmatrix}}
\newcommand{\lmin}{\lambda_{\rm min}}
\newcommand{\lmax}{\lambda_{\rm max}}
\newcommand{\Md}{M^\dagger}
\newcommand{\ZD}[1]{Z_{\Delta,#1}}
\newcommand{\LD}{L_\Delta}
\newcommand{\MD}{M_\Delta}
\begin{document}

\title[Simulation of conditional diffusions]{Simulation of elliptic and hypo-elliptic conditional diffusions}

\author[J.B.~Bierkens]{Joris Bierkens}
\address{Delft Institute of Applied Mathematics\\
Faculty of Electrical Engineering, Mathematics and Computer Science\\
Delft University of Technology\\
Van Mourik Broekmanweg 6\\
 2628 XE Delft\\
The Netherlands}
\email{joris.bierkens@tudelft.nl}

\author[F.H.~van der Meulen]{Frank van der Meulen}
\address{Delft Institute of Applied Mathematics\\
Faculty of Electrical Engineering, Mathematics and Computer Science\\
Delft University of Technology\\
Van Mourik Broekmanweg 6\\
 2628 XE Delft\\
The Netherlands}
\email{f.h.vandermeulen@tudelft.nl}

\author[M.~Schauer]{Moritz Schauer}
\address{Department of Mathematical Sciences\\
		Chalmers University of Technology and University of Gothenburg\\
		SE-412 96 G\"{o}teborg\\
		Sweden}
\email{smoritz@chalmers.se}

\subjclass[2000]{Primary: 60J60, Secondary: 65C30, 65C05}

\keywords{Diffusion bridge, FitzHugh-Nagumo model, guided proposal, Langevin sampler, partially observed diffusion, twice-integrated diffusion}
%%%%%%%%%

\begin{abstract}
Suppose $X$ is a multidimensional diffusion process. Assume that at time zero the state of $X$ is fully observed, but at time $T>0$ only linear combinations of its components are observed. That is, one only observes  the vector $L X_T$ for a given matrix $L$.  In this paper we show how samples from the conditioned process can be generated. The main contribution of this paper is to prove that  guided proposals, introduced in (\cite{Schauer}), can be used in a unified way for both uniformly and hypo-elliptic diffusions, also when $L$ is not the identity matrix.   This is illustrated by excellent performance in two challenging cases: a partially observed twice integrated diffusion with multiple wells and the partially observed FitzHugh-Nagumo model.
\end{abstract}

\date{\today}

\maketitle

\section{Introduction}
\noindent 

Let $X=(X_t,\, t\in [0, T])$ be a $d$-dimensional diffusion process satisfying the stochastic differential equation (SDE)
\begin{equation}\label{eq:sde} \dd X_t = b(t,X_t) \dd t + \si(t,X_t) \dd W_t,\qquad X_0=x_0,\qquad t\in [0,T]. \end{equation}
Here $b\colon [0,\infty)\times \RR^d \to \RR^d$, $\si\colon [0,\infty)\times \RR^d \to \RR^{d\times d'}$ and $W$ is a $d'$-dimensional Wiener process with all components independent. Stochastic differential equations are widely used for modelling in engineering, finance and biology, to name a few fields of applications. 
In this paper we will not only consider {\it uniformly elliptic} models, where it is assumed that there exists an $\eps >0$ such that for all $(t,x) \in [0,T] \times \RR^d$ and $y\in \RR^d$ we have $\|\si(t,x)^\T y\|^2 \ge \eps \|y\|^2$, but also hypo-elliptic models. These are models where the randomness spreads through all components - ensuring the existence of smooth transition densities of the diffusion, even though the diffusion is possibly not uniformly elliptic (for example because the Wiener noise only affects certain components.)
Such models appear frequently in  application areas; many examples are given in the  introductory section of \cite{clairon2017}.  A rich subclass of nonlinear hypo-elliptic diffusions that is included in our setup is specified by a drift of the form 
\begin{equation}\label{eq:descr-hyp1}
	b(t,x) = B x + \beta(t,x), 
\end{equation} 
where
\begin{equation}\label{eq:descr-hyp2} B=\Bm 0_{k\times k'} & I_{k\times k} \\ 0_{k'\times k'} & 0_{k'\times k}\Em \qquad \beta(t,x)=\Bm 0_{k\times 1} \\ \underline{\beta}(t,x)\Em \qquad  \si =\Bm 0_{k\times d'} \\ \underline{\sigma}(t) \Em \end{equation}
and $\underline{\si} \colon [0,\infty) \to  \RR^{k'\times d'}$, $\underline{\beta}\colon[0,\infty) \times \RR^d \to \RR^{k'}$ and  $k+k'=d$. This  includes several forms of integrated diffusions. 

Suppose $L$ is a $m\times d$ matrix with $m \le d$. We aim to simulate the process $X$, conditioned on the random variable
\[ V = L X_T. \]
The conditional process is termed a diffusion bridge, albeit its paths do not necessarily end at a fixed point but in the set $\{x\colon V = L x\}$.
 Besides being an interesting mathema\-tical problem on its own, simulation of such diffusion bridges is key to parameter estimation of diffusions from discrete observations. If the process is observed at discrete times directly or through an observation operator $L$, data-augmentation is routinely used for performing Bayesian inference (see for instance \cite{GolightlyWilkinson},  \cite{PapaRobertsStramer} and \cite{vdm-s-estpaper}). Here, a key step consists of the simulation of the ``missing'' data, which amounts to simulation of diffusion bridges. 
 
 Another application is nonlinear filtering, where  at time $t$ the state $X_t$ was observed and at time ${t+T}$ a new observation $L X_{t+T}$ comes in. Interest then lies in sampling from the distribution on $X_{t+T}$, conditional on $(X_t, L X_{t+T})$. The simulation method developed in this paper can then be used for constructing efficient particle filters. We leave the application of our methods to estimation and filtering to future research, although it is clear that our results can be used directly  within the algorithms given in \cite{vdm-s-estpaper}. Finally, rare-event simulation is a third application area for which our results are useful. 

We aim for a unified approach, by which we mean that the bridge simulation method applies simultaneously to uniformly elliptic and hypo-elliptic models. This is important, as in the aforementioned estimation problems either one of the two types of ellipticity may apply to the data.  While the sample paths of uniformly- and hypo-elliptic diffusions are very different, the  corresponding distributions of the observations can be very similar if the diffusion coefficients are close. Algorithms which are invalid for hypo-elliptic diffusions will therefore be numerically unstable if the model is close to being hypo-elliptic, and it may be a priori unknown if this is the case.

\subsection{Literature review} In case the diffusion is uniformly elliptic and the endpoint is fully observed, i.e.\ $L=I$, the problem has been studied extensively.  Cf.\  \cite{Clark}, \cite{DurhamGallant}, \cite{BeskosPapaspiliopoulosRobertsFearnhead}, \cite{DelyonHu}, \cite{beskos-mcmc-methods}, \cite{HaiStuaVo09}, \cite{LinChenMykland}), \cite{lindstrom}, \cite{Schoenmakers}, \cite{Bladt2}, \cite{Schauer} and \cite{Whitaker}.

 Much less is is known when either $L\neq I$ or   when the diffusion is not assumed to be uniformly elliptic. 
 In \cite{beskos-mcmc-methods} and \cite{HaiStuaVo09} a Langevin MCMC sampler is constructed for  sampling diffusion bridges when the drift is of the form $b(x) =B x + \si \si^\T \nabla V(x)$ and $\si$ is constant, assuming uniform ellipticity. Subsequently, this approach  was extended to hypo-elliptic diffusions of the form \[ \Bm X_t \\ Y_t \Em = \Bm Y_t \\ f(X_t) - Y_t\Em \dd t + \Bm 0 \\ 1\Em \dd W_t \]
 in \cite{Hairer2011}. However, no simulation results were included to the paper as ``these simulations proved prohibitively slow and the resulting method does not seem like a useful approach to sampling'' (\cite{Hairer2011}, page 671).

We will shortly review in more detail the works \cite{DelyonHu}, \cite{Marchand} and \cite{vdm-schauer}, as the present work builds upon these. The first of these papers includes some forms of hypo-elliptic diffusions, whereas the latter two papers consider uniformly elliptic diffusions with $L\neq I$. 

\cite{StramerRobertsJTSA} consider Bayesian estimation of nonlinear continuous-time autoregressive (NLCAR) processes using a data-augmentation scheme. This is a specific  class of hypo-elliptic models included by the specification \eqref{eq:descr-hyp1}--\eqref{eq:descr-hyp2}. The method of imputation is however different from what is proposed in this paper. 

\emph{Estimation} of discretely observed hypo-elliptic diffusions has been an active field over the past 10 years. As we stated before, within the Bayesian approach a data-augmentation strategy where diffusion bridges are imputed is natural. However, this is by no means the only way for doing estimation. Frequentist approaches to the estimation problem include \cite{Sorensen2012}, \cite{ditlevsen-samson2017}, \cite{lu2016}, \cite{comte2017}, \cite{SAMSON2012},  \cite{PokernStuartWiberg2009},   \cite{clairon2017} and \cite{melnykova}.

\subsection{Review of \cite{DelyonHu} and \cite{Schauer}}
To motivate and explain our approach, it is  useful to review shortly the methods developed in \cite{DelyonHu} and \cite{Schauer}. The method  that we propose in this article builds up on these papers. Both of these are restricted to the setting  $L=I$ (full observation of the diffusion at time $T$) and uniform ellipticity. Their common starting point  is that under mild conditions the diffusion bridge, obtained by conditioning on $L X_t = v$, is a diffusion process itself, governed by the SDE
\begin{equation}\label{Xstarequation} \dd X^\star_t = \left(b(t,X^\star_t) + a(t,X^\star_t)r(t,X^\star_t) \right) \dd t + \si(t,X^\star_t) \dd W_t,\qquad X^\star_0=x_0. \end{equation}
Here $a=\si \si^\T$ and $r(t,x) = \nabla_x \log p(t,x; T,v)$. We implicitly have assumed the existence of transition densities $p$ such that $\PP^{(t,x)}(X_T \in A)=\int_A p(t,x; T,\xi) \dd \xi$ and that $r(t,x)$ is well defined. The SDE for $X^\star$ can be derived from either Doob's $h$-transform or the theory of initial enlargement of the filtration. Unfortunately, the ``guiding'' term $a(t,X^\star_t)r(t,X^\star_t)$ appearing in the drift of $X^\star$ is intractable, as the transition densities $p$ are  not available in closed form. 
Henceforth, as direct simulation of $X^\star$ is infeasible, a common feature of both \cite{DelyonHu} and \cite{Schauer} is  to simulate a tractable process $X^\circ$ instead of $X^\star$, that resembles $X^\star$. Next,  the mismatch  can be  corrected for by a Metropolis-Hastings step or weighting. The {\it proposal} $X^\circ$ (the terminology is inherited from $X^\circ$ being a proposal for a Metropolis-Hastings step) is assumed to solve the SDE
\begin{equation}\label{eq:prop-sde}
\dd X^\circ_t = b^\circ(t, X^\circ_t) \dd t + \sigma(t, X^\circ_t) \dd W_t, \quad X^\circ_0 = x_0,  \quad t\in [0,T],
\end{equation}
where the drift $b^\circ$ is chosen such that the process $ X_t^\circ$ hits the correct endpoint (say $v$) at the final time $T$. \cite{DelyonHu}   proposed to take \begin{equation}\label{eq:DH}b^\circ(t,x) = \la b(t,x) + (v-x)/(T-t),\end{equation} where either $\la=0$ or $\la=1$,  the choice $\la=1$ requiring the drift $b$ to be bounded. If $\la=0$, a popular discretisation of this SDE is the Modified Diffusion Bridge introduced by \cite{DurhamGallant}. A drawback of this method is that the drift is not taken into account. 
In \cite{Schauer}  it was proposed to take 
\begin{equation}\label{eq:guideddrift}
			b^\circ(t, x) = b(t,x) + a(t, x) \tilde{r}(t,x)
\end{equation}
Here $\tilde{r}(t,x)=\nabla_x \log \tilde{p}(t,x; T,v)$, where $\tilde p(t,x)$ is  the transition density  of an auxiliary diffusion process $\tilde{X}$ that has tractable transition densities. In this paper, we always assume $\tilde{X}$ to be a linear process, i.e.\ a diffusion satisfying the SDE
\begin{equation}
	\label{eq:auxlin}
	\dd \tilde{X}_t =\tilde{b}(t,\tilde{X}_t) \dd t + \tilde\sigma(t) \dd W_t, \quad\text{where}\quad  \tilde{b}(t,x)=\tilde{B}(t) x +\tilde\beta(t).
\end{equation} 
The process $X^\circ$ obtained in this way will be referred to as a {\it guided proposal}. 

We denote the laws of $X$, $X^\star$ and $X^\circ$ viewed as measures on the space $C([0,t], \RR^d)$ of continuous functions from $[0,t]$ to $\RR^d$ equipped with its Borel-$\sigma$-algebra by  $\PP_t$, $\PP^\star_t$ and $\PP^\circ_t$ respectively.   \cite{DelyonHu} provided sufficient conditions such that $\PP^\star_T$ is absolutely continuous with respect to $\PP^\circ_T$ for the proposals derived from \eqref{eq:DH}. Moreover, closed form expressions for the Radon-Nikodym derivative were derived. For the proposals derived from \eqref{eq:guideddrift}, it was proved in \cite{Schauer} that the condition $\tilde\sigma(T)^\T \tilde\sigma(T) = a(T,v)$ is necessary for absolute continuity of $\PP^\star_T$ with respect to $\PP^\circ_T$. We refer to this condition as the {\it matching condition}, as the diffusivity of $X$ and $\tilde X$ need to match at the conditioning point. Under that condition (and some additional technical conditions), it was derived that 
  \[ \frac{\dd \PP^\star_T}{\dd \PP^\circ_T}(X^\circ) =\frac{\tilde{p}(0,x_0;T,v)}{p(0,x_0, T,v)} \exp\left(\int_0^T \scr{G}(s,X^\circ_s) \dd s\right), \]
  where $\scr{G}(s,x)$ is tractable. A great deal of work in the proof is concerned with proving that $\|X^\circ_t -v\| \to 0$ at the ``correct'' rate. 
  
%  \subsection{Extending \cite{DelyonHu} and \cite{Schauer}}
\subsection{Approach}
 We aim to extend the results in  \cite{DelyonHu} and \cite{Schauer} by lifting the restrictions of
\begin{enumerate}
  \item uniform ellipticity;
  \item  $L$ being the identity matrix.
\end{enumerate}

\subsubsection{Extending \cite{DelyonHu}}
We  first explain the difficulty in extending this approach  beyond uniform ellipticity. To see the problem, we fix $t<T$. Absolute continuity of  $\PP^\star_t$ with respect to $\PP^\circ$ requires the existence of a mapping
$\eta(s,x)$ such that 
\begin{equation}\label{eq:eta} \si(s,x) \eta(s,x) = b^\star(s,x)-b^\circ(s,x),\qquad s\in [0,t], \quad x\in \RR^d,  \end{equation}
which follows from    Girsanov's theorem (\cite{LiptserShiryayevI}, Section 7.6.4). 
However, for the choice of \cite{DelyonHu} (as given in equation \eqref{eq:DH})  this mapping $\eta$ need not exist both in case $\la=0$ and $\la=1$. If  $\la=1$ then we have \[b^\star(s,x)-b^\circ(s,x)=\frac{v-x}{T-s},\] and therefore $\eta(s,x)$ only exists if $v-x$ is  in the column space of $\si(s,x)$. A similar argument applies to the case $\la=0$. From these considerations, it is not surprising that \cite{DelyonHu} need additional assumptions on the form of the drift to deal with the hypo-elliptic case. 
More specifically, they consider 
\begin{equation} \label{eq:delyonhu-hypoelliptic} \dd X_t = \left( \si(t) h(t,X_t) + B(t) x + \beta(t)\right) \dd t + \sigma(t) \dd W_t, \end{equation}
 with $\si(t)$ admitting  a left-inverse. Then they show that bridges can be obtained by simulating bridges corresponding to this SDE with $h\equiv 0$, followed by correcting for the discrepancy by weighting according to their likelihood ratio.
 Clearly, the form of the drift in the preceding display is restrictive, but necessary for absolute continuity. 
 
 Whereas lifting the assumption of uniform ellipticity seems hard, lifting the assumption that $L=I$ is possible. Indeed, it was shown by  \cite{Marchand} in a clever way how this can be done by using  the guiding term
\begin{equation}\label{eq:pull-marchand} v(t,x):=a(t,x) L^\T (L a(t,x) L^\T)^{-1} \frac{v-Lx}{T-t} \end{equation}
to be superimposed on the drift of the original diffusion.  Hence, the proposal satisfies the SDE
\[ \dd X^\di_t = b(t,X^\di_t) \dd t + v(t,X^\di_t) \dd t + \si(t,X^\di_t) \dd W_t, \qquad X^\di_0 =x_0. \]
By  applying Ito's lemma to $(T-t)^{-1} (LX^\di(t)-v)$, followed  by the law of the iterated logarithm for Brownian motion, the rate at which  $L X^\di(t)$ converges to $v$ can be derived. Interestingly, the same guiding term as in \eqref{eq:pull-marchand} was used in a specific setting by \cite{Arnaudon2018}, where the guiding term was rewritten as $\si(t,x) (L\si(t,x))^+ (v-Lx)/(T-t)$, 
assuming that $L\si$ has linearly independent rows. Here $A^+$ denotes the Moore-Penrose inverse of the matrix $A$. 
The form of the guiding term in \eqref{eq:pull-marchand} suggests that invertibility of $La(t,x)L^\T$ suffices, which, depending on the precise form of $L$, would allow for some forms of hypo-ellipticity. However, we believe  there are fundamental problems when one wants to include for example  integrated diffusions. We return to this in the discussion in section \ref{sec:discussion}.

 \subsubsection{Extending \cite{Schauer}}\label{subsubsec:extending-schauer}
  In case $L$ is not the identity matrix, the conditioned diffusion also satisfies the SDE \eqref{Xstarequation}, albeit with an adjusted definition of $r(t,x)$. To find the right form of $r(t,x)$, assume without loss of generality that $\rank L = m<d$.  
 Let $(f_1, \dots, f_m)$ denote an orthonormal basis of
 $\Col(L^\T)$, and let $(f_{m+1},\dots,f_d)$ denote an orthonormal basis of $\ker L$.
 Then for $A\subset \RR^m$
 \begin{multline*}   \PP^{(t,x)}\left(X_T \in A \times \RR^{d-m}\right) =\\ \int_A \left(\int_{\RR^{d-m}} p\left(t,x; T, \sum_{i=1}^d \xi_i f_i\right) \dd \xi_{m+1},\cdots ,\dd\xi_d \right) \dd\xi_1,\cdots,\dd\xi_m.
  \end{multline*}
Suppose $x=\sum_{i=1}^d \xi_i f_i$ is such that $Lx=v$. This is equivalent to
\begin{equation}\label{eq:fix-first-mxis}   \sum_{i=1}^m \xi_i L f_i = v, \end{equation}
since $f_{m+1},\ldots, f_d \in \ker L$. Hence if $\xi_1,\ldots, \xi_m$ are determined by \eqref{eq:fix-first-mxis} and if we define 
\[ \rho(t,x) = \int_{\RR^{d-m}} p\left(t,x; T, \sum_{i=1}^d \xi_i f_i\right) \dd \xi_{m+1},\cdots ,\dd\xi_d,  \]
then this is the density of $X_T \mid X_t$, concentrated on the subspace $LX_T=v$. 
 
In case $\rank L=d$, we can assume without loss of generality that $L=I$ which is the situation of fully observing $X_T$. Summarising, we  define
\begin{equation}\label{eq:def-rho} \rho(t,x)=\begin{cases} p(t,x; T, v) & \text{if}\quad m=d \\ \int_{\RR^{d-m}} p\left(t,x; T, \sum_{i=1}^d \xi_i f_i\right) \dd \xi_{m+1},\cdots ,\dd\xi_d\,   & \text{if}\quad m<d\end{cases}\end{equation}
and let 
 $r(t,x) =\nabla_x \rho(t,x)$. 
The definition of guided proposals  in the partially observed  hypo-elliptic case is then just as in the uniformly elliptic case with a full observation: replace the intractable transition density $p$ appearing in the definition of $\rho$ by $\tilde{p}$ to yield $\tilde\rho$. Then  define  \[\tilde{r}(t,x)=\nabla_x \log \tilde\rho(t,x)\] and let  the process  $X^\circ$ be  defined by equation \eqref{eq:prop-sde} with  $b^\circ(t,x)=b(t,x) + a(t,x) \tilde{r}(t,x)$. For $t<T$, it is conceivable that $\PP^\star_t$ is absolutely continuous with respect to $\PP^\circ_t$ because clearly equation \eqref{eq:eta} is solved by 
$ \eta(s,x) =\si(s,x) \left(r(s,x)-\tilde{r}(s,x)\right).$
Contrary to the hypo-elliptic setting in \cite{DelyonHu}, no specific form of the drift needs to be imposed here. However, it is not clear whether
\begin{itemize}
  \item $\|L X^\circ_t - v\|$ tends to zero as $t\uparrow T$;
  \item $\PP^\star_T \ll \PP^\circ_T$. 
\end{itemize}
The two  main results of this paper (Proposition \ref{thm:limitU} and Theorem \ref{thm:main}) provide conditions such that this is indeed the case. Interestingly, in the hypo-elliptic case the necessary ``matching condition'' on the parameters of the auxiliary process $\tilde{X}$ not only involves its diffusion coefficient $\tilde\si(t)$, but its drift $\tilde{b}(t,x)$ as well. In particular, simply equating $\tilde{b}$ to zero turns the measures $\PP^\star_T$ and $\PP^\circ_T$ mutually singular.  For deriving the rate at which $\|L X^\circ_t - v\|$ decays we employ a completely different method of proof compared to the analogous result in \cite{Schauer}, using techniques detailed in \cite{Mao}. While the proof of the absolute continuity result is along the lines of that in \cite{Schauer}, having a partial observation and hypo-ellipticity requires nontrivial adaptations of that proof. 

{\it Put shortly, our results show that guided proposals can be defined for partially observed hypo-elliptic diffusions  exactly as in \cite{Schauer}, if an extra restriction on the drift $\tilde{b}$ of the auxiliary process $\tilde{X}$ is taken into account. } 

Whereas most of the results are derived for $\si$ depending on the state $x$, the  applicability of our methods is mostly confined to the case where $\si$ is only allowed to depend on $t$. The difficulty lies in checking the fourth inequality of Assumption \ref{ass:sigma_const} appearing in Section \ref{sec:main-results}. On the other hand, numerical experiments can give insight whether the law of a particular proposal process and the law of the conditional process are equivalent.

Examples of hypo-elliptic diffusion processes that fall into our setup include
\begin{enumerate}
  \item integrated diffusions, when either the rough, smooth, or both components are observed;
  \item higher order integrated diffusions;
  \item NLCAR models;
  \item the class of hypo-elliptic diffusions considered in \cite{Hairer2011}.
\end{enumerate}
These examples are listed here for illustration purpose. We stress that the derived  results are more general. 

Whereas some examples that we discuss can be treated by the approach of \cite{DelyonHu} (which is restricted to SDEs of the form~\eqref{eq:delyonhu-hypoelliptic}),  our approach extends well beyond this class of models (see for instance Example \ref{ex:not-dh}). Moreover, the hypo-elliptic bridges proposed by \cite{DelyonHu} are bridges of a linear process, whereas the bridges we propose {\it only} use a linear process to derive the guiding term that is superimposed on the true drift. This means that only our approach is able to incorporate  nonlinearity in the drift of the proposal.

\subsection{A toy problem}
Here  we first consider a two-dimensional uniformly elliptic diffusion with unit diffusion coefficient, which is fully observed. Upon taking $\tilde{b}\equiv 0$ and $\tilde\si=\si$, we have
\[ \dd X^\circ_t = b(t,X^\circ_t) \dd t + \frac{v-X^\circ_t}{T-t} \dd t + \dd W_t. \]
The guiding term can be viewed as the distance left to be covered, $v-X^\circ_t$, divided by the remaining time $T-t$. This simple expression is to be contrasted to a hypo-elliptic diffusion, the simplest example perhaps being  an integrated diffusion, with both components observed, i.e.\ a diffusion with 
\[ b(t,x)=\Bm 0 & 1 \\ 0 & 0 \Em x =: B x \quad \text{and} \quad \si=\Bm 0 \\ 1\Em. \] 
It follows from the results  in this paper that using  guided proposals we obtain an ``exact'' proposal, i.e.\  $X_t^{\circ} = X_t^{\star}$ upon taking $\tilde{B}=B$, $\tilde\beta\equiv 0$ and $\tilde\si=\si$. The SDE for $X^\circ$ takes the form 
\[ \dd X^\circ_t = \Bm 0 & 1 \\ 0 & 0 \Em X^\circ_t \dd t + \Bm 0 \\ 18 \frac{v_1-X^\circ_{t,1}}{(T-t)^2} + \frac{10v_2-28 X^\circ_{t,2}}{T-t}\Em \dd t + \Bm 0 \\ 1 \Em \dd W_t, \]
(where $X_{t,i}$ and $v_i$ denote the $i$-th component of $X_t$ and $v$ respectively). This is an elementary consequence of the process being Gaussian and  follows for example directly as a special case of either  Lemma \ref{lem:form-rtilde}  or Equation  \eqref{eq:delyonhu-hypoelliptic}. 

Even for this relatively simple case the guiding term behaves radically different compared to the uniformly elliptic case. 
The pulling term only acts on the rough coordinate and is not any longer inversely proportional to the remaining time.
This illustrates the inherent difficulty of the problem and explains the centring and scaling of $X^\circ$ that we will introduce for studying its behaviour.

\subsection{Outline}
In Section \ref{sec:main-results} we present the main results of the paper. We illustrate the main theorems by applying it to various forms of partially conditioned hypo-elliptic diffusions in Section~\ref{sec:tractable-hypo}. In Section~\ref{sec:numerical} we illustrate our work with simulation examples for the FitzHugh-Nagumo model and a twice integrated diffusion model.  The proof of the proposition on the behaviour of $X^\circ$ near the end-point is given in Section \ref{sec:behaviour-near-endpoint} and  the proof of the theorem on absolute continuity is given in Section \ref{sec:absolute-continuity}.  We end with a discussion in Section \ref{sec:discussion}. Some technical and additional results are gathered in the Appendix. 

\subsection{Frequently used notation}\label{sec:notation}
\subsubsection{Inequalities}
We use the following notation to compare two sequences $\{a_n\}$ and $\{b_n\}$ of positive real numbers: $a_n\lesssim b_n$  (or $b_n\gtrsim a_n$) means that there exists a constant $C>0$ that is independent of $n$ and is such that $a_n\leq C b_n.$ As a combination of the two we write $a_n\asymp b_n$ if both $a_n\lesssim b_n$ and $a_n\gtrsim b_n$. We will also write $a_n \gg b_n$ to indicate that $a_n/b_n\rightarrow\infty$ as $n\rightarrow\infty$. By $a \vee b$ and $a \wedge b$ we denote the maximum and minimum of two numbers $a$ and $b$ respectively. 

\subsubsection{Linear algebra}
We denote the smallest and largest eigenvalue of a square matrix $A$ by $\lmin(A)$ and $\lmax(A)$ respectively. The $p\times p$ identity matrix is denoted by $I_p$. The $p\times q$ matrix with all entries equal to zero is denoted by $0_{p\times q}$. For matrices we  use the spectral norm, which equals the largest singular value of the matrix. The determinant of the matrix $A$ is denoted by $|A|$ and the trace by $\tr(A)$.

\subsubsection{Stochastic processes}
For easy reference, the following table summaries the various processes around. The rightmost three columns give the drift, diffusion coefficient and measure on $C([0,t], \RR^d)$ respectively. 
\begin{center} \rara{1.2}
\begin{tabular}{|l|l|  l | l | l|}
\hline
$X$ & original, unconditioned diffusion process, defined by \eqref{eq:sde} & $b$ & $\si$& $\PP_t$\\
$X^\star$ & corresponding  bridge, conditioned on $v$, defined by \eqref{eq:xstar}& $b^\star$ &$\si$& $\PP^\star_t$\\
$X^\circ$ & proposal process defined by \eqref{eq:prop-sde}& $b^\circ$&$\si$& $\PP^\circ_t$\\ 
$\tilde X$ & linear process defined by  \eqref{eq:auxlin} whose transition densities $\tilde p$ & $\tilde b$ &$\tilde\si$& $\tilde{\PP}_t$\\ &
appear in the definition of $X^\circ$& &&\\
\hline
\end{tabular}
\end{center}
We write 
\[ a(t,x) = \si(t,x)\si(t,x)^\T \quad \text{and}\quad  \tilde{a}(t) = \tilde\si(t) \tilde\si(t)^\T. \]
The state-space of $X$, $X^\star$ and $X^\circ$ is $\RR^d$. The Wiener process lives on $\RR^{d'}$. The observation is determined by the $m\times d$ matrix $L$. Finally, the orthonormal basis $\{f_1,\ldots, f_d\}$ for $\RR^d$ defined in Section \ref{subsubsec:extending-schauer} is fixed throughout, as are the numbers $\xi_1,\ldots, \xi_m$ defined via Equation \eqref{eq:fix-first-mxis}. 

%%%%%%%%%%%%%%%%%%%%%%%%%%
\section{Main results}\label{sec:main-results}

Throughout, we assume
\begin{ass}\label{ass:solutionX-exists}
Both $b$ and $\si$ are globally Lipschitz continuous in both arguments. \end{ass}
This ensures that a strong solution to the SDE \eqref{eq:sde} exists. 
We define the conditioned process, denoted by $X^{\star}$ to be a diffusion process satisfying  the SDE
\begin{equation}\label{eq:xstar}
	\dd X^\star_t = b(t,X^\star_t) \dd t + a(t,X^\star_t) r(t,X^\star_t) \dd t + \si(t,X^\star_t) \dd W_t, \qquad X^\star_0=x_0.
\end{equation} 
Here $r(t,x)=\nabla_x \log \rho(t,x)$. A derivation is given in section \ref{sec:Doob-h}.

\begin{ass}\label{ass:p} The process $X$ has transition densities such that  the mapping $ \rho \colon \RR_+ \times \RR^d \to \RR$ is $C^{\infty,\infty}$ and strictly positive  for all $s< T$ and $x\in \RR^d$. 

  For fixed $x\in \RR^d$, $s$ and $t>s+\eps$,  the mapping $(t,y) \to p(s,x; t,y)$ is continuous and bounded. \end{ass}

In general Assumption~\ref{ass:p} is established by verifying H\"ormander's hypoellipticity conditions; see \cite{Williams1981}. The assumption is satisfied in particular under suitable conditions for the diffusion as described by equations \eqref{eq:descr-hyp1} and \eqref{eq:descr-hyp2}. Note that the results in this paper are not limited to this special case.

\begin{prop}\label{prop:hyp}
Suppose that the matrix valued function $t,x \mapsto \underline{\sigma}$ in  the hypo-elliptic model given by \eqref{eq:descr-hyp1} and \eqref{eq:descr-hyp2} has rank $k'$ for all $(t,x)$. Furthermore suppose that $\underline \sigma$ and $\underline \beta$ are infinitely often differentiable with respect to $(t,x)$. Then the process $(X_t)$ admits a smooth (i.e.\  $C^{\infty}$) density which is also smooth with respect to the initial condition.
\end{prop}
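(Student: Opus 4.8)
The plan is to verify H\"ormander's hypoellipticity condition for the generator of the diffusion $(X_t)$ and then invoke the standard theory (see \cite{Williams1981} and the references therein) guaranteeing that H\"ormander's condition implies the existence of a jointly smooth transition density, smooth also in the initial condition. First I would write the generator in H\"ormander "sum of squares plus drift" form. Since $\si = \bigl(\begin{smallmatrix} 0_{k\times d'}\\ \underline\si(t,x)\end{smallmatrix}\bigr)$, the columns of $\si$ give vector fields $\si_1(t,x),\dots,\si_{d'}(t,x)$, all of which have vanishing first $k$ components, and the drift $b(t,x) = Bx + \beta(t,x)$ gives a drift vector field. Because the coefficients are time-dependent, I would first reduce to a time-homogeneous problem on $\RR_{+}\times\RR^d$ by adjoining the variable $t$ (with the trivial equation $\dd t = \dd t$), so that H\"ormander's condition is to be checked for the lift; this is the routine device that makes the smoothness in the time argument automatic alongside smoothness in the space argument.

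The key computation is the Lie-bracket step. By assumption $\underline\si(t,x)$ has rank $k'$ for every $(t,x)$, so the vector fields $\si_1,\dots,\si_{d'}$ already span the "$k'$-block" of directions (the last $k'$ coordinates of $\RR^d$). It remains to produce the first $k$ coordinate directions. Here the block structure $B = \bigl(\begin{smallmatrix} 0 & I_k \\ 0 & 0\end{smallmatrix}\bigr)$ is exactly what is needed: bracketing the drift vector field $x\mapsto Bx + \beta + \partial_t$ against each $\si_j$ produces, up to lower-order terms, the vector field $-B\,\si_j$, and since $B$ maps the $j'$-th last coordinate to a combination of the first $k$ coordinates, the full collection $\{[b+\partial_t,\si_j]\}_j$ together with $\{\si_j\}_j$ spans $\RR\times\RR^d$ (the extra $\RR$-direction from the time variable coming from the trivial $\partial_t$ component once one iterates the bracket, or more simply, since $\partial_t$ is itself a drift we may keep it separate — one only needs the spatial directions to be spanned together with $\partial_t$). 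Thus H\"ormander's condition holds at every point with brackets of length at most two. The smoothness of $\underline\si$ and $\underline\beta$ ensures all the vector fields are $C^\infty$ so that the bracket computation is legitimate and H\"ormander's theorem applies.

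Having checked H\"ormander's condition uniformly in $(t,x)$, I would then cite the hypoellipticity theorem (H\"ormander's theorem for the heat operator $\partial_t - \mathcal{L}$, as in \cite{Williams1981}) to conclude that the fundamental solution $p(s,x;t,y)$ is $C^\infty$ in all of its arguments $(s,x,t,y)$ on the region $s<t$; in particular it is smooth in the "forward" variables $(t,y)$ and, by the same theorem applied to the adjoint/backward operator, smooth in the initial condition $(s,x)$. This gives precisely the two assertions in the statement: the density $(t,y)\mapsto p(s,x;t,y)$ is smooth, and it is smooth with respect to the initial condition $x$.

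The main obstacle is the bracket bookkeeping: one must be careful that the $x$-dependence of $\underline\si$ (if present) does not spoil the rank/spanning argument, and that the time-dependence is handled cleanly by the time-lift rather than by an ad hoc argument. In the generality stated ($\underline\si$ allowed to depend on $(t,x)$ but of full row rank $k'$, and all coefficients $C^\infty$), the spanning at length two is robust: the $\si_j$ span the rough block pointwise, and a single bracket with the drift — whose linear part $B$ is the shift operator moving the rough block onto the smooth block — recovers the remaining $k$ directions, so no deeper iterated brackets are required. I would just note explicitly that any lower-order terms arising in $[b+\partial_t,\si_j]$ (involving $\partial_x\underline\si$ and $\beta$) lie in the span of the $\si_i$'s plus the already-obtained directions and hence do not obstruct the conclusion.
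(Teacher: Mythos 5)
Your overall strategy—check H\"ormander's condition for the time-lifted generator, using the columns of $\sigma$ for the rough directions and brackets with the drift for the rest—is exactly the route the paper takes (it proves a more general Proposition~C.1 in Appendix~C and then specializes). Your handling of the time variable, and your observation that the lower-order terms in $[b+\partial_t,\sigma_j]$ land in $\Col\sigma(t,x) = \{0\}^k\times\RR^{k'}$ and hence do no harm, are both correct and are also the key observations in the paper's proof.

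However, there is a genuine gap in the claim that brackets of length at most two suffice, i.e.\ that $\{\sigma_j\}_j \cup \{[b+\partial_t,\sigma_j]\}_j$ already spans. Since $\underline\sigma$ has rank $k'$, the $\sigma_j$ span a $k'$-dimensional subspace, and $B$ is linear, so $\{B\sigma_j\}_j$ also spans at most $k'$ new directions --- not the full $k$ directions of the smooth block. Concretely, with the shift $B$ of \eqref{eq:descr-hyp2}, $B\sigma_j$ is supported on coordinates $\{k-k'+1,\dots,k\}$, so one bracket only reaches $k'$ of the $k$ smooth coordinates. When $k' < k$ this falls short: for the NCLAR($3$) model ($k=2$, $k'=1$, see Example~\ref{ex:NCLAR}) one has $\sigma \propto e_3$, $B\sigma \propto e_2$, and $\{\sigma,B\sigma\}$ misses $e_1$ entirely; one needs $B^2\sigma \propto e_1$, i.e.\ a bracket of length three. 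In general the iteration must go up to length $\lceil k/k'\rceil + 1$. The paper's Proposition~\ref{prop:hormander} makes this precise via the controllability hypothesis $\rank[\sigma\;\; B\sigma\;\; \cdots\;\; B^{d-1}\sigma] = d$ together with the inductive identity $[\mathcal A_j,\mathcal A_0]^{l} = B^{l}\sigma_j + \mathcal U$, where $\mathcal U(t,x) \in \Col[\sigma\;\; B\sigma\;\; \cdots\;\; B^{l-1}\sigma]$. Your argument would be complete if you replaced the single bracket by this induction: show $[\mathcal A_j,\mathcal A_0]^{l}$ equals $B^l\sigma_j$ modulo terms in the span of lower-order bracket directions (using, as you already noted, that all partial derivatives of $\beta$ and $\sigma$ land in $\Col\sigma$), and then appeal to controllability of $(B,\sigma)$ --- which does hold here precisely because $\underline\sigma$ has full rank $k'$ --- to conclude that the iterated brackets span $\RR^d$ at every point.
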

\begin{proof}
This is a special case of Proposition~\ref{prop:hormander} in  Appendix~\ref{sec:hormander}.
\end{proof}

\subsection{Existence of guided proposals}
The guiding term of $X^\circ$ involves $\tilde{r}\colon [0,\infty) \times \RR^d \to \RR$. 
 In the uniformly elliptic case it is easily verified that this mapping is well defined. This need not be the case in the hypo-elliptic setting. 
 
Let $\Phi(t)$ denote the fundamental matrix solution of the ODE 
$\dd \Phi(t) = \tilde{B}(t) \Phi(t) \dd t$, $\Phi(0)=I$ and set $\Phi(t,s)=\Phi(t)\Phi(s)^{-1}$. Define
 \begin{equation}
 	\label{eq:tildeL}
 	L(t) := L \Phi(T,t).
 \end{equation} 
\begin{ass}\label{ass:controll}
	The matrix
\[   \int_t^T \Phi(T,s) \tilde{a}(s) \Phi(T,s)^\T \dd s \] is strictly positive definite for $t<T$.
\end{ass}
In the uniformly elliptic setting, this assumption is always satisfied. 
Under this assumption, the matrix
	\[ \Md(t) := \int_t^T L(s) \tilde{a}(s) L(s)^\T \dd s \]
is also strictly positive definite for all $t\in [0,T)$ and, in particular, invertible.

\begin{lem}\label{lem:form-rtilde}
If Assumption \ref{ass:controll} holds then,
\begin{equation}
	\label{eq:tilder}
	\tilde{r}(t,x)=
L(t)^\T M(t) \left( v -\mu(t)-L(t)x\right), \qquad t\in [0,T],
\end{equation} 
where\[ \mu(t)=\int_t^T L(s) \tilde\beta(s) \dd s\] and
\begin{equation} \label{eq:tildeM}M(t) = [\Md(t)]^{-1}.\end{equation}

\end{lem}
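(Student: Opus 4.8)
The plan is to compute the transition density $\tilde p$ of the linear auxiliary process $\tilde X$ explicitly, form $\tilde\rho$ according to the definition \eqref{eq:def-rho}, and differentiate. Since $\tilde X$ solves the linear SDE \eqref{eq:auxlin}, the solution at time $T$ started from $x$ at time $t$ is Gaussian: by the variation-of-constants formula, $\tilde X_T = \Phi(T,t) x + \int_t^T \Phi(T,s)\tilde\beta(s)\dd s + \int_t^T \Phi(T,s)\tilde\sigma(s)\dd W_s$. Hence $\tilde X_T \mid \tilde X_t = x$ is $N\bigl(\Phi(T,t)x + \int_t^T \Phi(T,s)\tilde\beta(s)\dd s,\ C(t)\bigr)$ with covariance $C(t) = \int_t^T \Phi(T,s)\tilde a(s)\Phi(T,s)^\T \dd s$, which is strictly positive definite by Assumption \ref{ass:controll}, so $\tilde p(t,x;T,\cdot)$ is a genuine (full-rank) Gaussian density.

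Next I handle the partial-observation marginalisation. Working in the fixed orthonormal basis $\{f_1,\dots,f_d\}$ with $f_1,\dots,f_m$ spanning $\Col(L^\T)$ and $f_{m+1},\dots,f_d$ spanning $\ker L$, the integral $\int_{\RR^{d-m}} \tilde p(t,x;T,\sum_i \xi_i f_i)\dd\xi_{m+1}\cdots\dd\xi_d$ in \eqref{eq:def-rho} is a marginal of a Gaussian, hence again Gaussian: $\tilde\rho(t,x)$ is, up to a normalising constant not depending on $x$, proportional to the Gaussian density in the variables $(\xi_1,\dots,\xi_m)$ evaluated at the values fixed by \eqref{eq:fix-first-mxis}. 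The cleaner route is to observe that the marginalisation over $\ker L$ is exactly the push-forward of the Gaussian law of $\tilde X_T$ under $L$, so that the relevant object is the law of $L\tilde X_T \mid \tilde X_t = x$, which is $N\bigl(L\Phi(T,t)x + L\int_t^T \Phi(T,s)\tilde\beta(s)\dd s,\ L C(t) L^\T\bigr)$. In the notation of the lemma this mean is $L(t)x + \mu(t)$ and, after inserting $L\Phi(T,s) = L\Phi(T,s) = L(s)$... more precisely $L\Phi(T,s)\tilde a(s)\Phi(T,s)^\T L^\T = L(s)\tilde a(s) L(s)^\T$, so the covariance is $\Md(t)$, which is invertible with inverse $M(t)$ as noted after Assumption \ref{ass:controll}. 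Then $\tilde\rho(t,x) \propto \exp\bigl(-\tfrac12 (v - \mu(t) - L(t)x)^\T M(t)(v-\mu(t)-L(t)x)\bigr)$ up to a factor independent of $x$.

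Finally, taking $\tilde r(t,x) = \nabla_x \log\tilde\rho(t,x)$ and differentiating this quadratic form in $x$ gives $\tilde r(t,x) = L(t)^\T M(t)\bigl(v-\mu(t)-L(t)x\bigr)$, which is \eqref{eq:tilder}; one should note that the $x$-independent normalising constant drops out under $\nabla_x\log$, so the $m<d$ versus $m=d$ distinction in \eqref{eq:def-rho} does not affect the formula. The main point requiring care — though it is not deep — is the bookkeeping that identifies the marginalised density $\tilde\rho$ with the density of $L\tilde X_T$ (i.e.\ that integrating the full joint Gaussian over the $\ker L$ directions in the $f$-basis yields precisely the $L$-push-forward), and the verification that $LC(t)L^\T = \Md(t)$ via $L\Phi(T,s) = L(s)$; checking that $\tilde\rho$ is finite and smooth and that one may differentiate under the integral sign is routine given Assumption \ref{ass:controll}. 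I would also remark that the uniformly elliptic full-observation case $L = I$, $\tilde B,\tilde\beta$ general recovers the formula in \cite{Schauer}, and the two-dimensional integrated-diffusion display in the toy-problem section follows by plugging in $\tilde B = B$, $\tilde\beta\equiv 0$, $\tilde\sigma = \si$.
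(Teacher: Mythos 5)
Your argument is correct and is essentially the paper's own proof: both start from the variation-of-constants representation of the linear process, apply $L$ to identify the law of $L\tilde X_T \mid \tilde X_t = x$ as Gaussian with mean $L(t)x+\mu(t)$ and covariance $\Md(t)$, and then differentiate the log-density. You spend more words than the paper justifying that marginalising the full Gaussian over the $\ker L$ directions in the $f$-basis gives (up to an $x$-independent Jacobian) the $L$-pushforward density — a point the paper passes over with ``multiply both sides with $L$'' — but that is a welcome clarification rather than a different route.
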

\begin{proof}
The solution to the SDE for $\tilde{X}_u$ is given by 
\[ \tilde{X}_u = \Phi(u,t) x + \int_t^u \Phi(u,s) \tilde\beta(s) \dd s + \int_t^u \Phi(u,s) \tilde{\sigma}(s) \dd W_s, \quad u\ge t ,\quad \tilde{X}_t =x. \]
Cf.\ \cite{LiptserShiryayevI}, Theorem 4.10. The result now follows directly upon taking $u=T$, multiplying both sides with $L$ and using the definition of $L(t)$. 
\end{proof}

In Appendix \ref{sec:app-control} easily verifiable conditions for the existence of $\tilde{p}$ are given for the case $L=I$.

Since $t\mapsto \mu(t)$ and $t\mapsto M(t)$ are continuous and $x\mapsto \tilde{r}(t,x)$ is linear in $x$ for fixed $t$, the process $X^\circ$ is well defined on intervals bounded away from $T$. 
\begin{lem}
Under Assumptions \ref{ass:solutionX-exists} and \ref{ass:controll} we have that 
for any $t<T$, the SDE for $X^\circ$ has a unique strong solution on $[0,t]$. 
\end{lem}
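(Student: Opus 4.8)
The plan is to localise away from the singular time $T$ and then to invoke the classical existence-and-uniqueness theory for stochastic differential equations with coefficients that are locally Lipschitz in the state variable; the only substantial point is the exclusion of explosion on $[0,t]$. Fix $t<T$. By Assumption~\ref{ass:controll} the map $s\mapsto M(s)=[\Md(s)]^{-1}$ is continuous on $[0,T)$, and $s\mapsto\mu(s)$ and $s\mapsto L(s)$ are continuous there as well, so all three are bounded on the compact interval $[0,t]$. By Lemma~\ref{lem:form-rtilde}, $\tilde r(s,x)=L(s)^\T M(s)\bigl(v-\mu(s)-L(s)x\bigr)$ is then affine in $x$ with coefficients bounded uniformly in $s\in[0,t]$, hence $x\mapsto\tilde r(s,x)$ is globally Lipschitz, uniformly in $s$. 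Since $b$ and $\si$ are globally Lipschitz by Assumption~\ref{ass:solutionX-exists}, the map $a=\si\si^\T$ is locally Lipschitz in $x$ with at most quadratic growth, and therefore $b^\circ(s,x)=b(s,x)+a(s,x)\tilde r(s,x)$ is continuous on $[0,t]\times\RR^d$ and locally Lipschitz in $x$, uniformly in $s\in[0,t]$; the same holds trivially for $\si$. Standard theory (cf.~\cite{Mao}) then yields a unique maximal strong solution $(X^\circ_s)_{s<\zeta}$ of \eqref{eq:prop-sde} on $[0,t]$, with $\zeta\le t$ an explosion time and pathwise uniqueness up to $\zeta$, and it remains to prove $\zeta=t$ almost surely.

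For the non-explosion step I would use Khasminskii's test (see~\cite{Mao}): it suffices to produce a nonnegative $V$ with $\inf_{\|x\|\ge n}V(x)\to\infty$ and a constant $C_t$ with $\mathcal{L}^\circ_sV(x)\le C_t\bigl(1+V(x)\bigr)$ for all $s\in[0,t]$ and $x\in\RR^d$, where $\mathcal{L}^\circ_s$ denotes the time-$s$ generator of $X^\circ$. Testing $V(x)=\|x\|^2$ gives
\[ \mathcal{L}^\circ_sV(x)=2\langle x,b(s,x)\rangle+2\langle x,a(s,x)\tilde r(s,x)\rangle+\tr a(s,x), \]
and the first and last terms are bounded by $C_t\bigl(1+\|x\|^2\bigr)$ by the linear growth of $b$ and $\si$. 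The middle term is, I expect, the main obstacle: writing $\tilde r(s,x)=-L(s)^\T M(s)L(s)x+L(s)^\T M(s)\bigl(v-\mu(s)\bigr)$ splits it into a quadratic ``pulling'' piece $-2\langle\si(s,x)^\T x,\si(s,x)^\T L(s)^\T M(s)L(s)x\rangle$ and an affine piece of size $O\bigl(\|x\|\,\|a(s,x)\|\bigr)$; for state-dependent $\si$ the latter is cubic in $\|x\|$ and the quadratic piece is not sign-definite in general, so the bound $\mathcal{L}^\circ_sV\le C_t(1+V)$ cannot be obtained from $V=\|x\|^2$ alone.

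I would resolve this in two regimes. When $\si$ depends on time only --- the setting of all the models treated in Sections~\ref{sec:tractable-hypo} and~\ref{sec:numerical} --- the matrix $a(s)$ is continuous, hence bounded on $[0,t]$, so $b^\circ(s,\cdot)$ and $\si(s,\cdot)$ are in fact globally Lipschitz with linear growth, uniformly in $s\in[0,t]$, and the classical It\^o--Lipschitz theorem gives a unique strong solution on $[0,t]$ with no explosion to rule out. For genuinely state-dependent $\si$ I would instead replace $V(x)=\|x\|^2$ by a weight adapted to the pulling term, e.g.\ one proportional to $1/\tilde\rho(s,\cdot)$, for which the guiding drift $a\tilde r$ was built to be dissipative and dominates the contributions of $b$ and $a$ once $\|x\|$ is large, so that the Khasminskii inequality is recovered on $[0,t]$. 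In either case $\zeta=t$ almost surely, which together with pathwise uniqueness gives the asserted unique strong solution on $[0,t]$.
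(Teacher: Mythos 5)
The paper gives no proof of this lemma: it relies only on the remark immediately preceding it, namely that $\mu$, $M$ and $L$ are continuous (hence bounded on $[0,t]$) and that $\tilde r(s,\cdot)$ is affine in $x$. Your write-up makes the localisation and local-Lipschitz steps precise, and in the case $\sigma=\sigma(t)$ your argument is complete: on a compact $[0,t]$ with $t<T$ the matrix $a(s)$ is bounded, so $b^\circ(s,x)=b(s,x)+a(s)\tilde r(s,x)$ is globally Lipschitz and of linear growth uniformly in $s$, and the classical It\^o existence--uniqueness theorem applies with no explosion to worry about; this covers every model actually treated in the paper. You also correctly surface the point the paper glosses over: Assumption~\ref{ass:solutionX-exists} only demands $\sigma$ globally Lipschitz, not bounded, so $a(s,x)$ has quadratic growth, $b^\circ$ has cubic growth of indefinite sign, and $V(x)=\|x\|^2$ does not give a Khasminskii bound.

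The genuine gap is in your remedy for the state-dependent case. A Khasminskii test function must be coercive, $\inf_{\|x\|\ge n} V(s,x)\to\infty$ as $n\to\infty$; but $\tilde\rho(s,x)$ depends on $x$ only through $L(s)x$, since (as used in the proof of Lemma~\ref{lem:expec_greater_1}) $\tilde\rho(s,x)\propto \exp\bigl(-\tfrac12(v-\mu(s)-L(s)x)^\T M(s)(v-\mu(s)-L(s)x)\bigr)$. Hence $1/\tilde\rho(s,\cdot)$ is \emph{constant} along $\ker L(s)$, fails coercivity whenever $\rank L<d$, and cannot rule out explosion of the unobserved components. One would have to try a composite such as $V(s,x)=\|x\|^2+c/\tilde\rho(s,x)$ and verify both coercivity and $\mathcal L^\circ_s V\le C(1+V)$, which is not clear from the structure of $a\tilde r$ and should not be asserted without a computation. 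Given the paper's own caveat that its methodology is mostly confined to $\sigma=\sigma(t)$, the clean fix is to prove the lemma for $\sigma=\sigma(t)$ (global Lipschitz plus linear growth, as you do) and explicitly flag state-dependent $\sigma$ as requiring a separate non-explosion argument.
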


{\it Throughout, without explicitly stating it in lemmas and theorems, we will assume that Assumptions \ref{ass:solutionX-exists}, \ref{ass:p} and  \ref{ass:controll} hold true.}

%-------------------------
\subsection{Behaviour of guided proposals near the endpoint}\label{sec:behav}

Let $\Delta(t)$ be an invertible $m\times m$ diagonal matrix-valued measurable function on $[0,T)$. Define
\begin{equation}\label{eq:defU} \ZD{t}=\Delta(t) \left( v- \mu(t)-L(t) X^\circ_t  \right) \end{equation} and
\begin{equation}\label{eq:def-tildeC-P}
	\LD(t)=\Delta(t) L(t) \qquad \MD(t)=\Delta(t)^{-1} M(t) \Delta(t)^{-1}.
\end{equation} 
 Note that for $t\approx T$,  we have $\Phi(T,t)\approx I$ and hence $\ZD{t} \approx \Delta(t)(v-L X^\circ_t)$. The matrix $\Delta(t)$ is a scaling matrix which in the hypo-elliptic case incorporates the difference in rate of convergence for smooth and rough components of $L X^\circ_t$ to $v$, when $t\uparrow T$. In the uniformly elliptic case, we can always take $\Delta(t)=I_{m}$. 

The following assumption is of key importance.
\begin{ass}\label{ass:sigma_const}
There exists an invertible $m\times m$ diagonal matrix-valued function  $\Delta(t)$, which is measurable on $[0,T)$, a $t_0<T$, $\alpha \in (0,1]$ and positive constants $\underline{c}$, $\overline{c}$, $c_1$,   $c_2$ and $c_3$ such that for all $t \in [t_0,T)$
\begin{equation}\label{eq:bounds-thm}
\begin{split}
\underbar{c}\, (T-t)^{-1}\le \lmin\left(\MD(t)\right)  &\le \lmax\left(\MD(t)\right) \le \overline{c}\, (T-t)^{-1}, \\
\left\|  \LD(t)  \left(\tilde{b}(t,x) - b(t,x)\right) \right\| &\le c_1 \\
 \tr\left(\LD(t)\, a(t,x)\, \LD(t)^\T \right)
	&\le c_2  \\
	\|\LD(t) (\tilde{a}(t)-a(t,x)) \LD(t)^\T\| &\le c_3 (T-t)^\alpha.
	\end{split}	
\end{equation}

\end{ass}

\begin{prop}\label{thm:limitU}
Under Assumption \ref{ass:sigma_const}, there exists a positive number $C$ such that
\[ \limsup_{t\uparrow T} \frac{\|\ZD{t}\|}{\sqrt{(T-t) \log(1/(T-t))}} \le C \qquad \text{a.s.} \]
\end{prop}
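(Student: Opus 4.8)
The plan is to study the stochastic process $\ZD{t}$ directly via Itô calculus and then apply a law-of-the-iterated-logarithm type bound for the resulting stochastic differential equation, following the techniques in \cite{Mao}. First I would compute the SDE satisfied by $U_t := \ZD{t}$. Starting from the definition \eqref{eq:defU} and the SDE \eqref{eq:prop-sde} for $X^\circ$ with drift $b^\circ = b + a\tilde r$, together with the explicit form of $\tilde r$ from Lemma \ref{lem:form-rtilde}, one differentiates $\Delta(t)(v - \mu(t) - L(t) X^\circ_t)$. Using $\dd \mu(t) = -L(t)\tilde\beta(t)\dd t$ and $\dot L(t) = -L(t)\tilde B(t)$ (which follows from $L(t) = L\Phi(T,t)$ and the ODE for $\Phi$), the terms involving $\tilde b(t,x) = \tilde B(t)x + \tilde\beta(t)$ should largely cancel against the contribution from $\dot\mu$ and $\dot L$, leaving a drift of the schematic form
\[
\dd U_t = \dot\Delta(t)\Delta(t)^{-1} U_t \dd t + \LD(t)\bigl(\tilde b(t,X^\circ_t) - b(t,X^\circ_t)\bigr)\dd t - \LD(t) a(t,X^\circ_t) L(t)^\T M(t)\Delta(t)^{-1} U_t \dd t + \LD(t)\si(t,X^\circ_t)\dd W_t.
\]
The dominant, stabilizing term is $-\LD(t) a L(t)^\T M(t)\Delta(t)^{-1} U_t = -\LD(t) a \LD(t)^\T \MD(t) U_t$, and the point of the matching condition \eqref{eq:bounds-thm} is precisely that $\LD a \LD^\T \MD(t)$ behaves like a multiple of $(T-t)^{-1}$ times (approximately) the identity, making $U_t$ a mean-reverting process with reversion rate $\asymp (T-t)^{-1}$.

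Next I would construct a Lyapunov function, the natural candidate being $\|U_t\|^2 = U_t^\T U_t$, and compute $\dd \|U_t\|^2$ by Itô's formula. The drift of $\|U_t\|^2$ contains the negative-definite term $-2 U_t^\T \LD a \LD^\T \MD(t) U_t$, which by the eigenvalue bounds in Assumption \ref{ass:sigma_const} is bounded above by $-c\,(T-t)^{-1}\|U_t\|^2$ for some $c>0$ (here I would need to argue that $\LD a \LD^\T$ and $\MD$ combine correctly; the first and fourth inequalities of \eqref{eq:bounds-thm} control the discrepancy between $\LD a \LD^\T$ and $\LD \tilde a \LD^\T = (\MD)^{-1}$ times a bounded factor, so $\LD a \LD^\T \MD$ is close to identity up to an error of order $(T-t)^\alpha$). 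The cross terms with the $c_1$-bounded drift contribute at most $C(1 + \|U_t\|)\,\dd t$, and the quadratic variation term contributes $\tr(\LD a \LD^\T)\dd t \le c_2 \dd t$. The martingale part is $2 U_t^\T \LD \si \dd W_t$ with quadratic variation $\le 4 c_2 \|U_t\|^2 \dd t$.

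Then I would apply a time change or direct comparison to known LIL-type results. The cleanest route is: change variables to $\tau = -\log(T-t)$, so that the reversion rate $(T-t)^{-1}\dd t$ becomes $\dd\tau$ and the equation for $V_\tau := \|U_t\|^2$ (or for $U$ itself) becomes an autonomous-in-rate SDE with uniformly negative drift coefficient on the $\|U\|^2$ term, bounded additive forcing, and a martingale whose quadratic variation grows linearly in $\tau$ with rate controlled by $\|U\|^2$ plus a constant. Such processes satisfy $\limsup_{\tau\to\infty} V_\tau/\tau \le C$ almost surely by a standard argument combining the supermartingale-type estimate $\EE[e^{\lambda V_\tau}]$ bounds with Borel–Cantelli along the integer times $\tau = n$ and a maximal inequality to fill in between; Mao's book contains exactly the stochastic-stability estimates needed. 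Translating back, $\|U_t\|^2 \lesssim \log(1/(T-t))$, hence $\|U_t\| \lesssim \sqrt{(T-t)\log(1/(T-t))}$ after noting the $\Delta$-scaling is already absorbed in $U_t = \ZD{t}$; wait — more carefully, $V_\tau \asymp \|U_t\|^2$ so $\|\ZD{t}\|^2 \lesssim \log(1/(T-t))$, which gives $\|\ZD t\|/\sqrt{(T-t)\log(1/(T-t))} \to 0$, stronger than claimed; the stated bound with the $\sqrt{T-t}$ factor must come from tracking the additive forcing of order $(T-t)^{-1}\dd\tau$-equivalent more sharply, i.e. the stationary scale of $\|U_t\|^2$ is $\asymp (T-t)$ and the $\log$ enters only through the almost-sure fluctuations — so the Lyapunov analysis should be run on $W_t := \|U_t\|^2/(T-t)$ or equivalently one keeps the explicit $(T-t)$ prefactor in the forcing term throughout.

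The main obstacle I anticipate is making the cancellations in the drift of $U_t$ precise and then controlling the sign and size of the effective reversion matrix $\LD(t) a(t,X^\circ_t)\LD(t)^\T \MD(t)$ uniformly in the (random, unbounded) state $X^\circ_t$ — this is exactly where all four inequalities of Assumption \ref{ass:sigma_const} are used, and where the requirement $\alpha > 0$ matters so that the perturbation of the reversion matrix away from (a multiple of) the identity is $o(1)$ as $t\uparrow T$ and cannot destroy mean reversion. A secondary technical point is that the diffusion coefficient $\LD\si$ in front of the noise is state-dependent and only controlled in trace/spectral norm via $c_2$, so one cannot diagonalize cleanly; the Lyapunov/energy approach via $\|U_t\|^2$ sidesteps this, but the maximal inequality needed to pass from integer times to the continuum in the $\tau$-scale requires a careful Burkholder–Davis–Gundy estimate using the $c_2$ bound on the quadratic variation. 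I expect no essential difficulty beyond bookkeeping once the SDE for $\ZD{t}$ is in hand and the time change $\tau = -\log(T-t)$ is performed.
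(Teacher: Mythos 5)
Your outline --- derive the SDE for the scaled centering, set up a Lyapunov function, invoke Mao-style exponential martingale inequalities and Borel--Cantelli --- matches the paper's strategy, and you correctly see that all four conditions of Assumption \ref{ass:sigma_const} are needed. But the specific Lyapunov function you pick, $\|\ZD{t}\|^2 = \ZD{t}^\T\ZD{t}$, does not work, and that is the crux of the argument. The drift of $\|\ZD{t}\|^2$ contains $-2\,\ZD{t}^\T \LD(t)\, a_t\, \LD(t)^\T \MD(t)\, \ZD{t}$, and the matrix $\LD\, a\, \LD^\T \MD$ is the product of two symmetric positive semidefinite matrices, which in general is \emph{not} symmetric and whose symmetric part can be indefinite. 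This is not an abstract worry: take the fully-observed integrated diffusion of Example \ref{ex:integrated-diffusion}. There $\LD\, a\, \LD^\T = \gamma^2\bigl[\begin{smallmatrix}1 & 1\\ 1 & 1\end{smallmatrix}\bigr]$ (rank one) and $\MD(t) = \gamma^{-2}(T-t)^{-1}\bigl[\begin{smallmatrix}12 & 6\\ 6 & 4\end{smallmatrix}\bigr]$, so $\LD a\LD^\T\MD = (T-t)^{-1}\bigl[\begin{smallmatrix}18 & 10\\ 18 & 10\end{smallmatrix}\bigr]$ with symmetric part $(T-t)^{-1}\bigl[\begin{smallmatrix}18 & 14\\ 14 & 10\end{smallmatrix}\bigr]$, whose determinant is $-16/(T-t)^2 < 0$. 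So this matrix has a negative eigenvalue, the putative mean-reversion is not there, and your parenthetical claim that $\LD a\LD^\T$ is ``$(\MD)^{-1}$ times a bounded factor, so $\LD a\LD^\T \MD$ is close to identity'' is false: in this example $\LD a \LD^\T$ is $O(1)$ and rank one while $(\MD)^{-1}$ is $O(T-t)$ and full rank, so they are not comparable.

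The fix, which is what the paper does, is to use the $M$-weighted quadratic form. Work first with the \emph{unscaled} $Z_t = v - \mu(t) - L(t)X^\circ_t$ and compute the It\^o expansion of $\tfrac12 Z_t^\T M(t) Z_t$. This buys you two things at once. First, the term $\tfrac12 Z^\T \dot M Z$ combines with the ODE $\dot M = M\,L\tilde a L^\T M$ to produce, together with the guiding-term contribution, the sandwiched negative term $-\tfrac12 Z^\T M L\, a\, L^\T M Z$; since $M L a L^\T M$ \emph{is} symmetric positive semidefinite, this has the right sign, and it moreover exactly matches the quadratic variation of the stochastic integral $N_t = \int Z^\T M L\sigma\, \dd W$, so the exponential martingale inequality from \cite{Mao} applies to $N_t - \tfrac12[N]_t$ directly without any time change. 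Second, because $Z^\T M Z = \ZD{t}^\T \MD(t)\ZD{t}$ is a pure algebraic identity, the scaling $\Delta(t)$ enters only by substitution afterwards. This matters: you wrote an SDE for $\ZD{t}$ containing $\dot\Delta(t)\Delta(t)^{-1}$, but Assumption \ref{ass:sigma_const} only requires $\Delta$ to be measurable, so that derivative need not exist. The final step then passes from $\ZD^\T\MD\ZD$ to $\xi_t = (T-t)^{-1}\|\ZD{t}\|^2$ via $\lmin(\MD(t)) \gtrsim (T-t)^{-1}$ and closes with a Gronwall-type inequality adapted to the square-root nonlinearity (Lemma \ref{lem:gronwall2}); your proposed time change $\tau = -\log(T-t)$ plus BDG-filling between integer times is plausible as an alternative but is not needed once the exponential martingale inequality gives a supremum bound directly.
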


\begin{rem}
If $\si$ is state-dependent, it is particularly difficult to ensure that the fourth inequality in \eqref{eq:bounds-thm} is satisfied. There is at least one non-trivial example where this inequality can be assured (see Example \ref{ex:state-dep}). In Section \ref{sec:discussion} we further discuss the case of state dependent diffusivity.    In the simpler  case where $\si$ only depends on $t$, we can always take $\tilde\si(t)=\si(t)$ and then the fourth inequality is trivially satisfied. In Section \ref{sec:tractable-hypo} we verify \eqref{eq:bounds-thm}  for a wide range of examples. As a prelude: for the SDE system specified by \eqref{eq:descr-hyp1} and \eqref{eq:descr-hyp2} one takes $\tilde{B}=B$ and $\tilde\si=\si$. Then $\Delta(t)$ can be chosen such that the first inequality is satisfied. The second condition of \eqref{eq:bounds-thm} encapsulates a matching condition on the drift which induces some restrictions on $\tilde\beta$ and $\underline\beta$. The third inequality is then usually satisfied automatically. 
\end{rem}

The uniformly elliptic case is particularly simple:
\begin{cor}[Uniformly elliptic case]\label{cor:limit-behav-ue}
Assume that either (i) the diffusivity    $\si$ is constant and  $\tilde\si=\si$ or (ii) $\si$ depends on $t$ and $\tilde\si(t)=\si(t)$. Assume  $a$ is strictly positive definite and that 
 $b(t,x)-\tilde{b}(t,x)$ is bounded on $[0,T]\times \RR^d$. Then the conclusion of Proposition \ref{thm:limitU} holds true with $\Delta(t)=I_m$.
\end{cor}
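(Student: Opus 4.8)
The plan is to verify that Assumption~\ref{ass:sigma_const} holds with the simplest possible choice $\Delta(t)=I_m$ and then to apply Proposition~\ref{thm:limitU} verbatim. With $\Delta(t)=I_m$ one has $\LD(t)=L(t)=L\Phi(T,t)$, $\MD(t)=M(t)=[\Md(t)]^{-1}$, and $\ZD{t}=v-\mu(t)-L(t)X^\circ_t$, so everything reduces to checking the four inequalities in \eqref{eq:bounds-thm}.

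The fourth inequality is immediate in both cases considered: the diffusivity is state-independent, so $\tilde a(t)=\tilde\si(t)\tilde\si(t)^\T=\si(t)\si(t)^\T=a(t,x)$ and hence $\LD(t)\bigl(\tilde a(t)-a(t,x)\bigr)\LD(t)^\T\equiv 0$; the bound then holds with, say, $\alpha=1$ and any $c_3>0$. The second and third inequalities are a matter of boundedness on a compact interval. Since $t\mapsto\Phi(T,t)$ solves a linear ODE and satisfies $\Phi(T,T)=I$, it is continuous, hence bounded, on any $[t_0,T]$, so $t\mapsto L(t)=L\Phi(T,t)$ is bounded there; combined with the assumed boundedness of $b-\tilde b$ this yields the second inequality, and combined with continuity (hence boundedness on $[t_0,T]$) of $a(t,x)=a(t)$ it yields the third.

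The first inequality is the only one calling for an argument; it is equivalent to the two-sided estimate $\Md(t)\asymp(T-t)I_m$ near $T$. Here I would use uniform ellipticity: there are constants $0<\eps\le C_0$ with $\eps I_d\preceq a(s)\preceq C_0 I_d$ on $[t_0,T]$ in the positive semidefinite order (the lower bound is the ellipticity constant, the upper one comes from continuity of $a$), and therefore $\eps\,L(s)L(s)^\T\preceq L(s)\tilde a(s)L(s)^\T\preceq C_0\,L(s)L(s)^\T$. The matrix $L(s)L(s)^\T=L\Phi(T,s)\Phi(T,s)^\T L^\T$ depends continuously on $s$ and equals $LL^\T$ at $s=T$, which is positive definite because $L$ has full row rank $m$; hence, choosing $t_0$ close enough to $T$, we get $c_-I_m\preceq L(s)L(s)^\T\preceq c_+I_m$ on $[t_0,T]$ for some $0<c_-\le c_+$. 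Integrating over $[t,T]$ gives $\eps c_-(T-t)I_m\preceq\Md(t)\preceq C_0 c_+(T-t)I_m$, and inverting yields the first line of \eqref{eq:bounds-thm} with $\underbar{c}=(C_0c_+)^{-1}$, $\overline{c}=(\eps c_-)^{-1}$ (this also re-establishes Assumption~\ref{ass:controll}). All hypotheses of Proposition~\ref{thm:limitU} are then met and the conclusion follows. The only real obstacle is this uniform two-sided control of $\Md(t)$ as $t\uparrow T$; the rest is bookkeeping on the compact set $[t_0,T]$.
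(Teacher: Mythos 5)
Your proof is correct and follows the same overall strategy as the paper: verify the four inequalities of Assumption~\ref{ass:sigma_const} with $\Delta(t)=I_m$ and then invoke Proposition~\ref{thm:limitU}. Where you differ is in the level of detail for the key estimate $\Md(t)\asymp(T-t)I_m$: the paper treats it as ``easy to see'' and points to the first inequality of Lemma~8 in \cite{Schauer}, a lemma stated in the fully-observed ($L=I$) setting, whereas you give a short self-contained derivation — sandwich $a(s)$ between $\eps I_d$ and $C_0 I_d$, note that $L(s)L(s)^\T$ is continuous and equals the positive-definite $LL^\T$ at $s=T$ (using $\operatorname{rank} L=m$), integrate, and invert. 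This self-contained argument is a genuine improvement, since it applies directly to general $m\times d$ observation operators $L$ rather than borrowing a bound proved for $L=I$. One small imprecision: you say your bound ``re-establishes Assumption~\ref{ass:controll}''; more precisely it re-establishes the positive-definiteness of $\Md(t)$, which is the consequence of that assumption rather than the assumption itself (the latter concerns a $d\times d$, not $m\times m$, Gramian). This does not affect the validity of the proof.
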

\begin{rem}
The behaviour of $\|\ZD{t}\|$ that we obtain agrees with the results of \cite{Schauer}. That paper is confined to $L=I$ and the uniformly elliptic case, but includes the case of state-dependent diffusion coefficient. 
Under this assumption, it suffices that  $\tilde\sigma(T)^\T \tilde\sigma(T) = a(T,v)$, a condition that can always be ensured to be satisfied.

	\end{rem}

The proofs of Theorem \ref{thm:limitU} and Corollary \ref{cor:limit-behav-ue} are given in Section \ref{sec:behaviour-near-endpoint}.

In Section \ref{sec:tractable-hypo} we give a set of tractable hypo-elliptic models for which the conclusion of Theorem \ref{thm:limitU} is valid. The appropriate choice of the scaling matrix $\Delta(t)$ is really problem specific. Moreover, the assumptions on the auxiliary process depend on the choice of $L$. 

In most cases it will not be possible to satisfy the fourth inequality of Assumption \ref{ass:sigma_const} when the diffusion coefficient is state-dependent. The following example shows an exception. 
\begin{ex}\label{ex:state-dep}
Suppose the diffusion is uniformly elliptic and $L=\Bm \underline{L} & 0_{m\times k'}\Em$, where $L \in \RR^{m\times k}$ and $d=k+k'$.  Now suppose $a(t,x)$ is of block form:
\[ a(t,x) =\Bm a_{11}(t) & 0_{k\times k'} \\ 0_{k'\times k} & a_{22}(t,x) \Em \]
and that  we take $\tilde{a}$ to be of the same block form. 
Upon taking $\tilde{B} =0_{d\times d}$ and $\Delta(t)=I_d$, we see that $\LD(t)=L$ and hence 
\[	\LD(t) (\tilde{a}(t)-a(t,x)) \LD(t)^\T = \underline{L}  (\tilde{a}_{11}(t)-a_{11}(t)) \underline{L}^\T. \]
Therefore, if we choose $\tilde{a}_{11}(t)$ to be equal to $a_{11}(t)$ the fourth inequality in Assumption \ref{ass:sigma_const} is trivially satisfied. 
\end{ex}
Empirically however, it appears that Assumption \ref{ass:sigma_const} is stronger than needed for valid guided proposals, see Example \ref{example:sigma(x)}.

%---------------------------
\subsection{Absolute continuity}

The following theorem gives sufficient conditions for absolute continuity of $\PP_T^\star$ with respect to $\PP_T^\circ$. First we introduce an assumption.

\begin{ass}\label{ass:p<=tildep}
There exists a constant $C$ such that 
\begin{equation}\label{eq:p<=tildep} p(s,x;t,y) \le C \tilde{p}(s,x; t,y) \qquad 0\le s<t<T \end{equation}
for all $x, y \in \RR^d$. 
\end{ass}

\begin{thm}\label{thm:main}
Assume there exists a positive $\delta$ such that $|\Delta(t)| \lesssim (T-t)^{-\delta}$.
If Assumptions \ref{ass:sigma_const} and \ref{ass:p<=tildep} hold true, then\[ \frac{\dd \PP_T^\star}{\dd \PP_T^\circ}(X^\circ) = \frac{\tilde \rho(0,x_0)}{ \rho(0,x_0)}\Psi_T(X^\circ), \]
where 
\begin{equation}\label{eq:Psi} \Psi_t(X^\circ)=\exp\left(\int_0^t \scr{G}(s,X^\circ_s) \dd s\right), \end{equation}
\[
\begin{split}\label{eq:G} \scr{G}(s,x) &= (b(s,x) - \tilde b(s,x))^\T \tilde r(s,x)  \\ & \qquad -  \frac12 \tr\left(\left[a(s,x) - \tilde a(s)\right] \left[\tilde H(s)-\tilde{r}(s,x)\tilde{r}(s,x)^\T\right]\right)
\end{split}
\]
and $\tilde{H}(s)=L(s)^\T M(s) L(s)$.
\end{thm}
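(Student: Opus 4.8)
The plan is to establish the identity first on a truncated interval $[0,t]$ with $t<T$, where all the measures involved are mutually absolutely continuous and Girsanov's theorem is available, and then to let $t\uparrow T$, using the endpoint rate of Proposition~\ref{thm:limitU} together with the domination Assumption~\ref{ass:p<=tildep}.

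\textbf{Finite horizon.} Fix $t<T$. By the construction of $X^\star$ as the $h$-transform with $h=\rho$ (Section~\ref{sec:Doob-h}) --- using that $\rho$ is strictly positive and smooth on $[0,t]\times\RR^d$ by Assumption~\ref{ass:p} and solves the backward Kolmogorov equation $\partial_s\rho+b^\T\nabla\rho+\tfrac12\tr(a\nabla^2\rho)=0$ of $X$ --- one has $\PP^\star_t\ll\PP_t$ with $\frac{\dd\PP^\star_t}{\dd\PP_t}(X)=\rho(t,X_t)/\rho(0,x_0)$. Next, since the extra drift $a\tilde r=\si(\si^\T\tilde r)$ of $X^\circ$ lies in the range of $\si$ and, by Lemma~\ref{lem:form-rtilde}, $x\mapsto\tilde r(s,x)$ is affine with coefficients continuous on $[0,t]$ (hence of at most linear growth), a standard Girsanov argument gives $\PP^\circ_t\sim\PP_t$ with density $Z_t=\exp\big(\int_0^t(\si^\T\tilde r)(s,X_s)\dd W_s-\tfrac12\int_0^t\|(\si^\T\tilde r)(s,X_s)\|^2\dd s\big)$, and $W^\circ_s:=W_s-\int_0^s(\si^\T\tilde r)(u,X_u)\dd u$ is a $\PP^\circ$-Brownian motion; thus $\frac{\dd\PP^\star_t}{\dd\PP^\circ_t}(X)=\rho(t,X_t)/(\rho(0,x_0)Z_t)$. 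To turn the right-hand side into an explicit path functional, I would then apply Itô's formula to $\log\tilde\rho(s,X_s)$ along $\dd X_s=(b+a\tilde r)\dd s+\si\dd W^\circ_s$: using $\nabla\log\tilde\rho=\tilde r$, the affine form of $\tilde r$ which gives $\nabla^2\log\tilde\rho=-\tilde H(s)$, and the backward equation $\partial_s\tilde\rho+\tilde b^\T\nabla\tilde\rho+\tfrac12\tr(\tilde a\nabla^2\tilde\rho)=0$ of $\tilde X$, the drift collapses to $\dd\log\tilde\rho(s,X_s)=(\scr G(s,X_s)+\tfrac12\tilde r^\T a\tilde r)\dd s+\tilde r^\T\si\dd W^\circ_s$ with $\scr G$ exactly as in the statement. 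Since $\log Z_t=\int_0^t\tilde r^\T\si\dd W^\circ_s+\tfrac12\int_0^t\tilde r^\T a\tilde r\dd s$, this forces $Z_t=\tilde\rho(t,X_t)/(\tilde\rho(0,x_0)\Psi_t(X))$, and substituting and evaluating at $X=X^\circ$ (which under $\PP^\circ$ is distributed as $X^\circ$) yields, for every $t<T$,
\[ D_t:=\frac{\dd\PP^\star_t}{\dd\PP^\circ_t}(X^\circ)=\frac{\tilde\rho(0,x_0)}{\rho(0,x_0)}\,\frac{\rho(t,X^\circ_t)}{\tilde\rho(t,X^\circ_t)}\,\Psi_t(X^\circ). \]

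\textbf{Passage to $t\uparrow T$.} Here I would verify three facts. (i) $\Psi_t(X^\circ)$ converges a.s.\ to a finite $\Psi_T(X^\circ)$: writing $\tilde r(s,X^\circ_s)=\LD(s)^\T\MD(s)\ZD{s}$ and $\tilde H(s)=\LD(s)^\T\MD(s)\LD(s)$, the bounds of Assumption~\ref{ass:sigma_const} on $\LD(\tilde b-b)$, on $\LD(\tilde a-a)\LD^\T$, and on $\lmax(\MD)$, combined with $\|\ZD{s}\|\lesssim\sqrt{(T-s)\log(1/(T-s))}$ from Proposition~\ref{thm:limitU}, bound $|\scr G(s,X^\circ_s)|$ for $s$ near $T$ by a constant times $(T-s)^{\alpha-1}\log(1/(T-s))+((T-s)^{-1}\log(1/(T-s)))^{1/2}$, which is integrable on $[0,T]$ since $\alpha>0$. (ii) $\rho(t,X^\circ_t)/\tilde\rho(t,X^\circ_t)\to1$ a.s.; this is the delicate point, carried out as in \cite{Schauer} but with substantial modifications for partial observation and hypo-ellipticity, by expanding both densities as $t\uparrow T$: Proposition~\ref{thm:limitU} controls the argument $LX^\circ_t-v$, the hypothesis $|\Delta(t)|\lesssim(T-t)^{-\delta}$ keeps the normalising constants of $\tilde\rho$ (and, via Assumption~\ref{ass:p}, of $\rho$) trapped between powers of $T-t$, and the drift-- and diffusivity--matching encoded in the second and fourth inequalities of Assumption~\ref{ass:sigma_const} makes the leading-order terms cancel. (iii) $(D_t)_{t<T}$ is uniformly integrable under $\PP^\circ$: by Assumption~\ref{ass:p<=tildep}, $\rho\le C\tilde\rho$, hence $D_t\le C\frac{\tilde\rho(0,x_0)}{\rho(0,x_0)}\Psi_t(X^\circ)$, and one needs a moment bound $\sup_{t<T}\EE^{\PP^\circ}[\Psi_t(X^\circ)^p]<\infty$ for some $p>1$, obtained from the same quantitative endpoint control, now in $L^p$, using also the third inequality of Assumption~\ref{ass:sigma_const} to control the quadratic-variation contribution.

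\textbf{Conclusion and main difficulty.} Since the coordinate process has continuous paths, $\scr F_T=\bigvee_{t<T}\scr F_t$, so uniform integrability of the nonnegative $\PP^\circ$-martingale $(D_t)_{t<T}$ gives a.s.\ and $L^1(\PP^\circ)$ convergence to a limit $D_T$ with $\EE^{\PP^\circ}[D_T]=1$, whence $\PP^\star_T\ll\PP^\circ_T$ with $\frac{\dd\PP^\star_T}{\dd\PP^\circ_T}(X^\circ)=D_T$; by (i) and (ii), $D_T=\frac{\tilde\rho(0,x_0)}{\rho(0,x_0)}\Psi_T(X^\circ)$, which is the asserted identity. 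I expect the principal obstacle to be fact (ii): in the degenerate, partially observed regime neither $\rho(t,\cdot)$ nor $\tilde\rho(t,\cdot)$ converges and both blow up at rates governed by $\Delta(t)$, so one must carefully compare the true and auxiliary transition densities near the terminal time and show that Assumption~\ref{ass:sigma_const} exactly cancels the leading discrepancy; the $L^p$-bound on $\Psi_t(X^\circ)$ needed for (iii) is the second, more technical, hurdle.
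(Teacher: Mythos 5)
Your finite-horizon derivation coincides with the paper's Proposition~\ref{prop-abscont-beforeT} (Doob $h$-transform plus Girsanov, with the key observation that $\tilde\rho$ solves Kolmogorov's backward equation for $\tilde X$), and your step~(i) is essentially the content of Lemma~\ref{lem:boundlikelihood}. The passage to $t\uparrow T$, however, is where you diverge from the paper and where your proposal has genuine gaps.

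You propose to prove (ii) that $\rho(t,X^\circ_t)/\tilde\rho(t,X^\circ_t)\to 1$ a.s., by ``expanding both densities'' near $T$, and you assert this is ``carried out as in \cite{Schauer}.'' Neither is the case. The paper never proves this a.s.\ convergence (nor does \cite{Schauer}): the density $\rho$ is the intractable true transition density, and there is no asymptotic expansion available at the level of generality treated here. Instead, the paper localizes with the stopping times $\sigma_k$ built from the Proposition~\ref{thm:limitU} rate, transfers the $X^\star$-expectation to an $X$-expectation through the $h$-transform, and then argues at the \emph{integral} level: Lemma~\ref{lem:tildepoverp} shows $\expec{\,g(X^\star_{t_1},\dots,X^\star_{t_N})\,\tilde\rho(t,X^\star_t)/\rho(t,X^\star_t)}\to\expec{g(X^\star_{t_1},\dots,X^\star_{t_N})}$, a Chapman--Kolmogorov argument using only boundedness and continuity of $p$ and the delta-approximation property of $\tilde\rho$, while Lemma~\ref{lem:expec_greater_1} kills the tail term on $\{t>\sigma_k\}$ using $|\Delta(t)|\lesssim(T-t)^{-\delta}$ and the explicit Gaussian form of $\tilde\rho$. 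The a.s.\ statement you posit as a step is only a \emph{consequence} of the theorem once the identity is established; it is not an available intermediate result.

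Your step~(iii) also requires a moment bound $\sup_{t<T}\EE[\Psi_t(X^\circ)^p]<\infty$ for some $p>1$, which the paper does not prove and which does not follow from Assumption~\ref{ass:sigma_const}: Lemma~\ref{lem:boundlikelihood} gives only a pathwise bound $\exp(Kk^2)$ on the stopped event $\{T=\sigma_k^\circ\}$, which degenerates too fast in $k$ to yield $L^p$. The paper sidesteps uniform integrability of $(D_t)$ entirely: after the localization it uses dominated/monotone convergence in $k$ to get $\EE[\Psi_T(X^\circ)]$ on one side, shows the other side tends to $1$, then invokes Scheff\'e's lemma to upgrade to $L^1$, and finally identifies the density by testing against bounded $\mathcal F_s$-measurable continuous functionals with $s<T$. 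If you want to keep your martingale-convergence framing, you would still need a workable surrogate for (ii) and (iii); the paper's stopping-time-plus-Scheff\'e-plus-weak-identification route is precisely designed to avoid both.
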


The proof is given in Section \ref{sec:absolute-continuity}. 
%%%%%%%%%%%%%%%%%%%%%%%%%%
\begin{rem}
	The expression for the Radon-Nikodym derivative does depend on the intractable transition densities $p$ via the term $\rho(0,x_0)$. This is a multiplicative term that only shows up in the denominator and therefore cancels in the acceptance probability for sampling diffusion bridges using the Metropolis-Hastings algorithm. 
	
\end{rem}

%%%%%%%
The following lemma is useful for verifying Assumption \ref{ass:p<=tildep}. Its proof is located in Section \ref{sec:absolute-continuity}. 
\begin{lem} \label{lem:verif-p<=tildep}
Assume $\eta(s,x)$ satisfies the equation
\[ \si(s,x) \eta(s,x) = b(s,x)-\tilde{b}(s,x) \]
and that $\eta$ is bounded. Then there exists a constant $C$ such that 
\[ p(s,x;t,y) \le C \tilde{p}(s,x; t,y)\]
for all $x, y \in \RR^d$  for all  $0\le s<t\le T $. 
\end{lem}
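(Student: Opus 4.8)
The plan is to reduce the comparison of transition densities of $X$ and $\tilde X$ to a Girsanov change-of-measure computation, in the spirit of the bound $p \le C\tilde p$ obtained in \cite{DelyonHu} and \cite{Schauer}. Since $\si(s,x)\eta(s,x) = b(s,x) - \tilde b(s,x)$ with $\eta$ bounded, the laws of $X$ and $\tilde X$ started from the same point $x$ at time $s$ are mutually absolutely continuous on $C([s,t],\RR^d)$ for every $t < T$ (indeed up to $t = T$), by Girsanov's theorem: the Radon-Nikodym derivative is $\exp\bigl(\int_s^t \eta(u,X_u)^\T \dd W_u - \tfrac12\int_s^t \|\eta(u,X_u)\|^2 \dd u\bigr)$, and Novikov's condition holds because $\eta$ is bounded. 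First I would make this precise, checking that $\eta$ being bounded and measurable (together with the standing Lipschitz Assumption \ref{ass:solutionX-exists} ensuring both SDEs have strong solutions) is enough to invoke Girsanov on $[s,t]$.

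Next, I would pass from the change of measure on path space to a pointwise bound on transition densities by conditioning on the endpoint. Writing $\expec{\,\cdot\mid \tilde X_t = y}$ for the conditional expectation under the bridge of $\tilde X$ from $(s,x)$ to $(t,y)$, one has the identity
\[ p(s,x;t,y) = \tilde p(s,x;t,y)\, \expec{ \exp\Bigl( \int_s^t \eta(u,\tilde X_u)^\T \dd \tilde W_u - \tfrac12 \int_s^t \|\eta(u,\tilde X_u)\|^2 \dd u \Bigr) \,\Big|\, \tilde X_t = y }, \]
so it remains to bound the conditional expectation on the right uniformly in $x,y$ and in $0 \le s < t \le T$. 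The stochastic-integral term is the obstacle here: it is not bounded pathwise. I would handle it by rewriting $\int_s^t \eta(u,\tilde X_u)^\T \dd\tilde W_u$ in terms of $\tilde X$ itself. Since $\eta$ solves the linear equation above with $\si = \tilde\si$ constant-in-$x$ (under the assumptions where the lemma is actually applied $\tilde\si = \si$), we may write $\eta(u,\tilde X_u)^\T \dd\tilde W_u$ using $\tilde\si(u)\dd\tilde W_u = \dd\tilde X_u - \tilde b(u,\tilde X_u)\dd u$ and a left-inverse of $\tilde\si$, turning the It\^o integral into a sum of a Riemann integral (bounded, since it involves $\eta$, $\tilde b$ and the drift of $\tilde X$ against the bridge, all controlled) and a term $\int_s^t \psi(u)^\T \dd\tilde X_u$ for a bounded deterministic $\psi$; the latter can be integrated by parts to express it through the bridge endpoints $\tilde X_s = x$, $\tilde X_t = y$ and a further Riemann integral $\int_s^t (\text{bounded}) \cdot \tilde X_u \dd u$. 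A Gaussian bridge has exponential moments of $\int_s^t \tilde X_u \dd u$ that are uniform over the relevant range of parameters, which gives the required uniform bound on the conditional expectation.

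The main obstacle, then, is precisely this control of the Girsanov exponential conditionally on the endpoint: the naive bound by $e^{\sup\|\eta\|^2(t-s)/2}$ only controls the quadratic-variation term, and one must argue that the martingale term $\int_s^t \eta^\T \dd\tilde W$ cannot blow up the conditional expectation. The cleanest route is the integration-by-parts reduction just sketched, which converts everything to boundary terms and Riemann integrals of the Gaussian process $\tilde X$; alternatively one can cite the analogous estimate in \cite{DelyonHu} (their Lemma establishing $p \le C\tilde p$ for the comparison with the corresponding linear process) essentially verbatim, since the structure of the argument there does not use $L = I$ in any essential way. I would present the integration-by-parts argument as the main line and remark that it is the partially-observed, hypo-elliptic analogue of the estimate already used in those papers.
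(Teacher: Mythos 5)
Your first half — Girsanov change of measure followed by conditioning on the endpoint, yielding
\[
p(s,x;t,y)=\tilde p(s,x;t,y)\,
\expec{\exp\Bigl(\int_s^t\eta(u,\tilde X_u)^\T\dd W_u-\tfrac12\int_s^t\|\eta(u,\tilde X_u)\|^2\dd u\Bigr)\,\Big|\,\tilde X_t=y}
\]
— is exactly the paper's starting point, and you correctly note that the obstacle is the stochastic integral in the exponent, since the quadratic-variation term is trivially bounded by $\|\eta\|_\infty^2 T/2$.

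Where you diverge is in how you control that term, and this is where the gaps lie. The paper does \emph{not} integrate by parts; it time-changes. Writing $\tau_j=\int_s^t\eta_j(\tilde X_u)^2\dd u$, boundedness of $\eta$ gives a \emph{deterministic} bound $\tau_j\le K_j$, so by Dambis--Dubins--Schwarz (Knight's theorem for the vector of orthogonal martingales) $\int_s^t\eta_j\dd W^j=B^j_{\tau_j}\le\sup_{0\le u\le K_j}B^j_u$ for a Brownian motion $B^j$, and the running supremum of Brownian motion over a fixed interval has finite exponential moments. This argument requires nothing beyond $\|\eta\|_\infty<\infty$ and in particular never inverts $\tilde\sigma$.

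Your integration-by-parts route, in contrast, has several concrete problems. First, the coefficient in $\int_s^t\psi(u)^\T\dd\tilde X_u$ is $\psi(u)=\eta(u,\tilde X_u)^\T\tilde\sigma^+(u)$, which is \emph{not} deterministic: $\eta(u,x)$ solves $\sigma\eta=b-\tilde b$, and in every example in the paper $b(u,\cdot)-\tilde b(u,\cdot)$ depends on $x$ (through $\underline\beta$), so $\eta$ does too. Integrating by parts with a state-dependent integrand would require It\^o's formula for $u\mapsto\eta(u,\tilde X_u)$ — needing differentiability not assumed — and would reintroduce a martingale term rather than eliminate it. Second, the Riemann integral $\int_s^t\eta^\T\tilde\sigma^+\tilde b(u,\tilde X_u)\dd u$ you call ``controlled'' involves $\tilde B(u)\tilde X_u$, which is not pathwise bounded. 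Third, even in the degenerate case of deterministic $\eta$, the boundary term produced by integration by parts grows linearly in $x$ and $y$; exponentiating it is not uniformly bounded, and the ``uniform exponential moments of the Gaussian bridge'' you invoke are in fact \emph{not} uniform in the bridge endpoints — they degrade as $\|x\|,\|y\|\to\infty$. (This is precisely why the paper does not condition on the left endpoint when bounding $\tau_j$: it bounds $\tau_j$ \emph{pathwise}, not in mean.) Fourth, your route presupposes a left-inverse of $\tilde\sigma$, i.e.\ full column rank, which is an extra hypothesis the paper's argument avoids. In short: the setup is the same as the paper's, but the mechanism you propose for bounding the conditional Girsanov weight would not close, whereas the paper's DDS/time-change argument is both more elementary and free of these difficulties.
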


%%%%%%%%%%%%%%%%%%%%%%%%%%

\section{Tractable hypo-elliptic models} \label{sec:tractable-hypo}

In this section we give several examples of hypo-elliptic models that satisfy Assumption \ref{ass:sigma_const}. 
In the following we write $X_t=\Bm X_{t,1} & \cdots & X_{t,d}\Em^\T$. 

In each of the examples  we choose  an appropriate scaling matrix  $\Delta(t)$  and verify the conditions of Assumption \ref{ass:sigma_const}. For this,  we need to evaluate $\LD(t)$ and $\MD(t)$. The computations are somewhat tedious by hand (though straightforward), and for that reason we used the computer algebra system {\sf Maple}  for this. Ideally, instead of the conditions appearing  in Assumption \ref{ass:sigma_const}, one would like to have conditions only containing  $b$, $\tilde b$, $\si$ and $\tilde \si$. This however seems hard to obtain and maybe a bit too much to ask for, given the wide diversity in  behaviour of hypo-elliptic diffusions and the generality of the matrix $L$. In each of the examples, we state the model and the conditions on $\tilde{b}$ and $\tilde\si$ such that Assumption \ref{ass:sigma_const} is satisfied.

\begin{ex}\label{ex:integrated-diffusion} {\bf Integrated diffusion, fully observed.}
Suppose 
\begin{equation}\label{eq:intdiffusion}
\begin{split}
	\dd X_{t,1} & = X_{t,2} \dd t \\
	\dd X_{t,2} & = \underline\beta(t,X_t) \dd t + \ga \dd W_t,	\end{split}	
\end{equation}
where $\underline\beta\colon [0,T] \times \RR^2 \to \RR$ is bounded and globally Lipschitz in both arguments. If $L=I_2$, and the coefficients of the auxiliary process $\tilde{X}$ satisfy
\[ \tilde{B}(t) = \Bm 0 & 1 \\ 0 & 0 \Em, \qquad \tilde\beta_1(t) = 0, \qquad \tilde\sigma =\Bm 0 \\ \gamma\Em \]
then Assumption \ref{ass:sigma_const} is  satisfied.

{\it Proof:} 
 As we expect the rate of the first component, which is smooth, to converge to the endpoint one order faster than the second component, which is rough, we take
\[ \Delta(t)=\Bm (T-t)^{-1} & 0 \\ 0 & 1 \Em. \]
We have 
\[ b(t,x)-\tilde{b}(t,x) =  \Bm 0 \\ \underline\beta(t,x)\Em. \]
By choice of $\tilde{B}$ and $\Delta$ we get
\[  \LD(t)=\Delta(t) L \Phi(T,t) =\Delta(t) \Phi(T,t)= \Bm 1/(T-t) & 1 \\ 0 & 1 \Em. \]
and
\[ M(t) = \frac1{\ga^2} \Bm 12/(T-t)^3 & 6/(T-t)^2 \\6/(T-t)^2 & 4/(T-t) \Em \quad \Longrightarrow\quad  \MD(t) =  \frac1{\ga^2} (T-t)^{-1} \Bm 12 & 6 \\ 6 & 4\Em. \]
Now it is trivial to verify that Assumption \ref{ass:sigma_const} is  satisfied.
%{\tt hypA.mw}
\end{ex}

 \begin{ex}\label{ex:integrated-diffusion-partial1} {\bf Integrated diffusion, smooth component observed.}
 Consider the same setting as in the previous example, but now with $L=\Bm 1 & 0\Em$. That is,  only the smooth (integrated) component is observed. Then Assumption \ref{ass:sigma_const} is satisfied.

 {\it Proof:}
 Upon taking  $\Delta(t)=(T-t)^{-1}$ we get \begin{equation}\label{eq:intdiff:smooth-obs}  \MD(t)=3\ga^{-2}(T-t)^{-1} \quad \text{and} \quad \LD(t)=\Bm 1/(T-t) & 1 \Em\end{equation}
from which the claim easily follows
\end{ex}

 \begin{ex}\label{ex:integrated-diffusion-partial2} {\bf Integrated diffusion, rough component observed.}
 Consider the same setting as in Example \ref{ex:integrated-diffusion}, but now with $L=\Bm 0 & 1\Em$. That is,  only the rough component is observed. Then Assumption \ref{ass:sigma_const} is satisfied.

{\it Proof:}
Taking $\Delta(t)=1$ we get
 $ \MD(t)=\ga^{-2}(T-t)^{-1}$ and $\LD(t)=\Bm 0 & 1 \Em,$
 from which the claim easily follows.
%{\tt hypA-partial.mw}

The guiding term is completely independent of the first component. This is not surprising, as this example is equivalent to fully observing a one-dimensional uniformly elliptic diffusion (described by the second component). 
\end{ex}

\begin{ex}\label{ex:NCLAR} {\bf NLCAR($p$)-model.} The integrated diffusion model is a special case of the class of continuous-time nonlinear autoregressive models (Cf.\ \cite{StramerRobertsJTSA}). The real valued process $Y$ is called a $p$-th order NLCAR model if it solves the $p$-th order SDE
\[ \dd Y^{(p-1)}_t = \underline\beta(t,Y_t) \dd t + \ga \dd W_t. \]
We assume  $\underline\beta\colon [0,T] \times \RR^2 \to \RR$ is bounded and globally Lipschitz in both arguments.
This example corresponds to the model specified by \eqref{eq:descr-hyp1}--\eqref{eq:descr-hyp2} with $d=p$, $d'=1$ and $k=p-1$. 	Integrated diffusions correspond to $p=2$. Observing only the smoothest component means that we have $L=\Bm 1 & 0_{1\times d-1} \Em$. This class of models includes in particular continuous-time autoregressive  and continuous-time threshold autoregressive models, as defined in \cite{Brockwell-carma}.

We consider the NCLAR($3$)-model more specifically, which can be written explicitly as a diffusion in $\RR^3$ with 
\begin{equation}\label{eq:NLCAR3} b(t,x) = \Bm 0 & 1 & 0 \\ 0 & 0 & 1 \\ 0 & 0 & 0 \Em x + \Bm 0 \\ 0 \\ \underline{\beta}(t,x)\Em \qquad \si=\Bm 0 \\ 0 \\ \ga\Em. \end{equation}

If either $L=I_3$ or  $L=\Bm 1 & 0 & 0\Em$, Assumption \ref{ass:sigma_const} is satisfied if the coefficients of the auxiliary process $\tilde{X}$ satisfy
\begin{equation}\label{eq:aux-nclar3} \tilde{B}(t) = \Bm 0 & 1&0 \\ 0 & 0&1 \\ 0 & 0 &0 \Em, \qquad \tilde\beta_1(t)=\tilde\beta_2(t) = 0, \qquad \tilde\sigma =\Bm 0 \\0\\ \gamma\Em. \end{equation}

{\it Proof:}
If $L=I_3$ we   take the scaling matrix
\[ \Delta(t)=\Bm (T-t)^{-2} & 0&0 \\ 0 & (T-t)^{-1}&0\\ 0 & 0 & 1 \Em \]
to account for the different degrees of smoothness of the paths of the diffusion. We then obtain, defining $w(t)=(T-t)^{-1}$
\[ \MD(t)= \frac{3w(t)}{\ga^2}\Bm 240 & -120 & 20 \\
-120 & 64 & -12 \\ 20 & -12 & 3\Em\quad  \text{and} \quad \LD(t)=\Bm w(t)^2 & w(t) & 1/2 \\ 0 & w(t) & 1 \\ 0 & 0 &1\Em, \]
from which the claim is easily verified. 

In case $L=\Bm 1 & 0 & 0\Em$, we take $\Delta(t)=(T-t)^{-2}$  and since
\[ \MD(t) = \frac{20}{\ga^2} (T-t)^{-1} \quad \text{and}\quad \LD(t)=\Bm (T-t)^{-2} & (T-t)^{-1} & 1/2\Em. \]
Assumption \ref{ass:sigma_const} is satisfied. 
See Example~\ref{numex:nclar3} for a numerical illustration of this example.
%{\tt hypB.mw} and 
%{\tt hypB-partial.mw}
\end{ex}

\begin{ex} \label{ex:FM-demodulation}
Assume the following model for FM-demodulation:
\[
	\dd \Bm X_{t,1} \\ X_{t,2}\\ X_{t,2}\Em  = 
	\Bm X_{t,2} \\ -\alpha X_{t,2} \\ \sqrt{2\ga} \sin(\omega t + X_{t,1}) \Em \dd t + \Bm 0 & 0 \\ \sqrt{2\ga \alpha} & 0 \\ 0 & \psi \Em \dd \Bm W_{t,1} \\ W_{t,2} \Em. 
\]
Here, the observation is determined by $L=\Bm 0 & 0 & 1\Em$. 
Motivated by this example, we check our results for a diffusion with coefficients
\[ b(t,x) = B x + \Bm 0 \\ \underline\beta_2(t,x) \\ \underline\beta_3(t,x) \Em,	\qquad B=\Bm 0 & 1 & 0 \\ 0 & -\alpha & 0 \\0 & 0 & 0 \Em
\quad\text{and}\quad 
\si = \Bm 0 & 0 \\ \ga_1 & \ga_2 \\ \ga_3 & \ga_4\Em. \]
Note that this is a slight generalisation of the FM-demodulation model. We will assume that $\ga_3^2+\ga_4^2\neq 0$ and  $\underline\beta_3$ to be  bounded. 
If  $\tilde{B}(t)=B$ and $\tilde\si=\sigma$, then Assumption \ref{ass:sigma_const} is satisfied.
 
{\it Proof:}
As the observation is  on the rough component, we choose  $\Delta(t)=1$. We have \[ \MD(t)=(T-t)^{-1}(\ga_3^2+\ga_4^2)^{-1}\]
and $\LD(t)=\Bm 0 & 0 & 1\Em$. 
Hence 
$  \LD(t)\left(\tilde{b}(t)-b(t,x)\right) = \tilde\beta_3(t)-\underline\beta_3(t,x) $
and the other conditions are easily verified.
%{\tt hypC.mw}
\end{ex}

\begin{ex}\label{ex:tracking-2D}
Assume $\Bm X_{t,1} & X_{t,2}\Em^\T$ gives the position in the plane of a particle at time $t$. Suppose the velocity vector of the particle at time $t$, denoted by $\Bm X_{t,3} & X_{t,4}\Em^\T$ satisfies a SDE driven by a $2$-dimensional Wiener process. The evolution of $X_t=\Bm  X_{t,1} & X_{t,2} & X_{t,3} & X_{t,4}\Em^\T$ is then described by the SDE
\begin{align*}
	\dd X_{t,1} & = X_{t,3} \dd t \\
	\dd X_{t,2} & = X_{t,4} \dd t \\
	\dd \Bm X_{t,3}\\ X_{t,4}\Em & = \Bm \underline\beta_3(t,X_t) \\ \underline\beta_4(t,X_t)\Em \dd t + \ga \dd W_t,
\end{align*}
where $W_t \in \RR^2$. 
This example corresponds to the case $d=4$, $d'=2$ and $k=2$ in the model specified by \eqref{eq:descr-hyp1}--\eqref{eq:descr-hyp2}. Observing only the location corresponds to
\[ L=\Bm 1 & 0 & 0&0 \\
0 & 1 & 0 & 0 \Em. \]

In matrix-vector notation the drift of the diffusion is given by $b(t,x) = B x + \beta(t,x)$, where 
\[ B=\Bm 0 & 0 & 1 &0\\ 0 & 0 & 0&1 \\0 & 0 & 0 &0\\0 & 0 & 0 &0\Em \qquad\text{and}\qquad  \beta(t,x) =\Bm 0 \\ 0 \\ \underline\beta_3(t,X_t)\\\underline\beta_4(t,X_t)\Em. \]
We will assume diffusion coefficient
$\displaystyle	 \si=\Bm 0 & 0 & \ga_1 & \ga_3\\ 0&0&\ga_2 & \ga_4\Em^\T, $
where $\ga_1\ga_4-\ga_2\ga_3 \neq 0$. 
If $\tilde\beta_1(t)=\tilde\beta_2(t)=0$, $\tilde{B}(t)=B$ and $\tilde\si=\si$, then Assumption \ref{ass:sigma_const} is satisfied.

{\it Proof:} 
As we observed the first two coordinates, which are both smooth, we take $\Delta(t)=(T-t)^{-1}I_2$. The claim now follows from \[ \MD(t)=(T-t)^{-1}\frac{3}{(\ga_1\ga_4-\ga_2\ga_3)^2}\Bm 
-\ga_3^2-\ga_4^2 & \ga_1\ga_3+\ga_2\ga_4\\ \ga_1\ga_3+\ga_2\ga_4& -\ga_1^2-\ga_2^2.
\Em\]
and 
$\displaystyle \LD(t) = \Bm (T-t)^{-1} & 0 & 1 & 0 \\ 0 & (T-t)^{-1} & 0 & 1 \Em.$
%{\tt hypD.mw}
\end{ex}

\begin{ex}\label{ex:hairer-stuart-voss}
\cite{Hairer2011} consider SDEs of the form
\[ \dd X_t = \Bm 0 & 1 \\ 0 & \th\Em X_t\dd t + \Bm 0 \\ \underline\beta(t,X_t) \Em \dd t + \Bm 0 \\ \gamma \Em \dd W_t,
\] where $X_t=\Bm X_{t,1} & X_{t,2}\Em^\T$, $\th >0$ and the conditioning is specified by $L=\Bm 1 & 0\Em$. As explained in \cite{Hairer2011}  the solution to this SDE can be viewed as the time evolution of the state of a mechanical system with friction under the influence of noise. Assume $(t,x) \mapsto \underline\beta(t,x)$ is bounded and Lipschitz in both arguments. Note that this hypo-elliptic SDE is not of the form given in \eqref{eq:descr-hyp1} and \eqref{eq:descr-hyp2}. 
However, if  \[ \tilde{B}(t)= B_\theta= \Bm 0 & 1\\ 0 & \th\Em, \quad \tilde\beta_1(t)=0\quad  \text{and} \quad \tilde\si=\Bm 0 \\ \gamma \Em, \] 
then Assumption \ref{ass:sigma_const} is satisfied.

{\it Proof:} 
Upon 	taking $\Delta(t)=(T-t)^{-1}$, we find that 
\[ \lim_{t\uparrow T} (T-t) \MD(t) = 3\ga^{-2} \quad\text{and} \quad  \LD(t) =\Bm \frac1{T-t} & \frac{\e^{\th (T-t)}-1}{\th (T-t)}\Em. \]
This is to be compared with the expressions in \eqref{eq:intdiff:smooth-obs}. We conclude similarly as in  Example \ref{ex:integrated-diffusion-partial1}  that the conditions in Assumption \ref{ass:sigma_const} are satisfied.

\end{ex}

\begin{ex}\label{ex:not-dh}
This is an example to illustrate that our approach applies beyond equations of the form \eqref{eq:delyonhu-hypoelliptic}. We assume 
\[ \dd X_t =  B X_t\dd t + \Bm 0 \\ \underline\beta(t,X_t) \Em \dd t + \Bm 1 \\ 1 \Em \dd W_t
\]
with $\underline\beta\colon [0,T] \times \RR^2 \to \RR$ bounded and globally Lipschitz in both arguments.  
Suppose $L=\Bm 1 & 0\Em$. If 
\[ \tilde{B} =B:= \Bm 0 & 1 \\ 0 & 0\Em, \qquad \tilde\beta=\Bm 0 \\ 0 \Em, \qquad \tilde\si = \Bm 1\\ 1 \Em, \]
then \ref{ass:sigma_const}  holds. 

{\it Proof:}
Using $\Delta(t)=1$,  we have $L_\Delta(t)=\Bm 1 & T-t\Em$, $\lim_{t\uparrow T} (T-t) \MD(t)=1$ and the claim follows as in the previous examples. 

\end{ex}

%%%%%%%%%%%%%%%
%%%%%%%%%%%%%%%%%%%%%%%%%
\section{Numerical illustrations}\label{sec:numerical}

In this section we will discuss implementational aspects of our sampling method, and we will illustrate the method by some representative numerical examples. 
We implemented the examples as parts of the authors' software package \textsf{Bridge} (\cite{bridge}), written in the programming language
\textsf{Julia}, (\cite{julia}.) The corresponding code is available, \cite{codeexamples}.

For computing the guiding term and likelihood ratio, we have the following backwards ordinary differential equations
\begin{alignat}{3}
 \dd L(t) &= -L(t) \tilde{B}(t)\dd t,&L(T)&= L  \\
 \dd \Md(t)&=- L(t) \tilde{a}(t) L(t)^\T\dd t,\quad& \Md(T)&=0_{m\times m}  \\
 \dd \mu(t) &=-L(t) \tilde\beta(t)\dd t,& \mu(T)&=0_{m\times 1},  
\end{alignat}
where $t\in [0,T]$. These are easily derived, cf.\  lemma 2.4 in \cite{Vdm-S-smoothing}. These backward differential equations need only be solved once. Next, Algorithm 1 from \cite{vdm-s-estpaper} can be applied. This algorithm describes  a Metropolis-Hastings sampler for simulating diffusion bridges using guided proposals. We briefly recap the steps of this algorithm, more details can be found in \cite{vdm-s-estpaper}. As we assume $X^\circ$ to be a strong solution to the SDE specified by Equations \eqref{eq:prop-sde} and \eqref{eq:guideddrift}, there is a measurable mapping $\scr{G}\scr{P}$  such that $X^\circ=\scr{G}\scr{P}(x_0, W)$, where $x_0$ is the starting point and $W$ a $\RR^{d'}$-valued Wiener process ($\scr{G}\scr{P}$ abbreviates $\scr{G}$uided $\scr{P}$roposal). As $x_0$ is fixed, we will write, with slight abuse of notation, $X^\circ=\scr{G}\scr{P}(W)$. The algorithm requires to choose a tuning parameter $\rho\in [0,1)$ and proceeds according to the following steps.
\begin{enumerate}
  \item Draw a Wiener process $Z$ on $[0,T]$, Set $X=g(Z)$.
  \item Propose a Wiener process $W$ on $[0,T]$. Set 
  \begin{equation}\label{eq:langevinprop} Z^\circ = \rho Z + \sqrt{1-\rho^2} W \end{equation}  and 
  $X^\circ=\scr{G}\scr{P}(Z^\circ)$.
  \item Compute $A=\Psi_T(X^\circ)/\Psi_T(X)$ (where $\Psi_T$ is as defined in  \eqref{eq:Psi}). Sample $U\sim \mbox{Uniform}(0,1)$. If $U<A$ then set $Z=Z^\circ$ and $X=X^\circ$.
  \item Repeat steps (2) and (3).
\end{enumerate}
The invariant distribution of this chain is precisely $\PP^\star_T$. 
If the guided proposal is good, then we may use $\rho=0$, which yields an independence sampler. However, for difficult bridge simulation schemes, possibly caused by a large value of $T$ or strong nonlinearity in the drift or diffusion coefficient, a value of $\rho$ close to $1$ may be required. The proposal in Equation \eqref{eq:langevinprop} is precisely the pCN method, see e.g.\ \cite{cotter2013}. 

In the implementation we use a fine equidistant grid, which is transformed by the mapping $\tau\colon [0,T] \to [0,T]$ given by $\tau(s)=s(2-s/T)$. Motivation for this choice is given in Section 5 of \cite{vdm-s-estpaper}. Intuitively, the guiding term gets stronger near $T$ and therefore we use a finer grid the closer we get to $T$.  The guided proposal is simulated on this grid, and using the values obtained, $\Psi_T(X^\circ)$ is approximated by Riemann approximation. Furthermore, for numerical stability we solve the equation for $\Md(t)$ using $\Md(T)= 10^{-10} I_{m\times m}$ instead of  $\Md(T)=0_{m\times m}$.

\begin{ex}\label{numex:nclar3}
Assume the NCLAR($3$)-model, as described in Example \ref{ex:NCLAR} with $\underline\beta(t,x) =-6\sin(2\pi x)$ and $x_0=\Bm 0 & 0 & 0\Em^\T$. 
We first condition the process on hitting \[v=\Bm 1/32 & 1/4 & 1\Em^\T\] at time $T=0.5$, assuming $L=I_3$ (full observation at time $T$). The idea of this example is that sample paths of the rough component are mean-reverting at levels $k \in \ZZ$, with occasional noise-driven shifts from one level to another. The given conditioning then  forces the process to move halfway the interval (at about time $0.25$) from level $0$ to level $1$, remaining at level approximately level $1$ up till time $T$. Such paths are rare events and obtaining these by forward simulation is computationally extremely intensive.
 
We construct guided proposals according to \eqref{eq:aux-nclar3} with $\tilde\beta_3(t)=0$. 
Iterates of the sampler using $\rho=0.85$ are shown in Figure \ref{fig:nclar-fullcond}. The average Metropolis-Hastings acceptance percentage was $43\%$. We need a value of $\rho$ close to $1$ as we cannot easily incorporate the strong nonlinearity into the guiding term of the guided proposal.  We repeated the simulation, this time only conditioning on $LX_T=1/32$, where $L=\Bm 1 & 0 & 0\Em$. We again took $\rho=0.95$, leading to an average Metropolis-Hastings acceptance percentage of $24\%$. The results are in Figure \ref{fig:nclar-partialcond}. The distribution of bridges turns out to be bimodal. The latter is confirmed by extensive forward simulation and only keeping those paths which approximately satisfy the conditioning.

\begin{figure}
	\begin{center}
			\includegraphics[scale=0.7]{./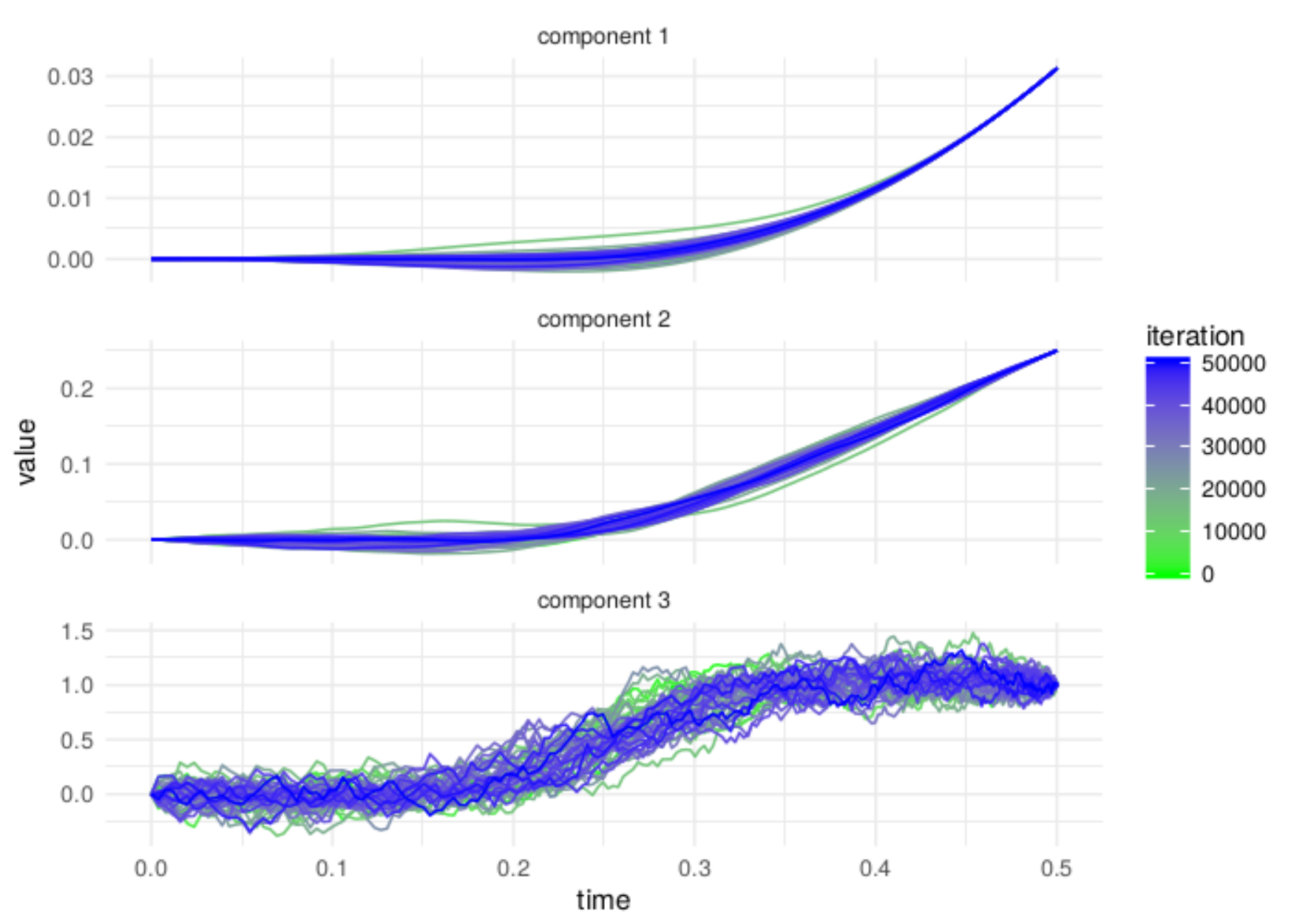}
	\end{center}
	\caption[]{Sampled guided diffusion bridges when conditioning on 
$X_T=\Bm 1/32 & 1/4 & 1\Em^\T$ 	 in Example \ref{numex:nclar3}.	\label{fig:nclar-fullcond}}

\end{figure}

\begin{figure}
	\begin{center}
			\includegraphics[scale=0.7]{./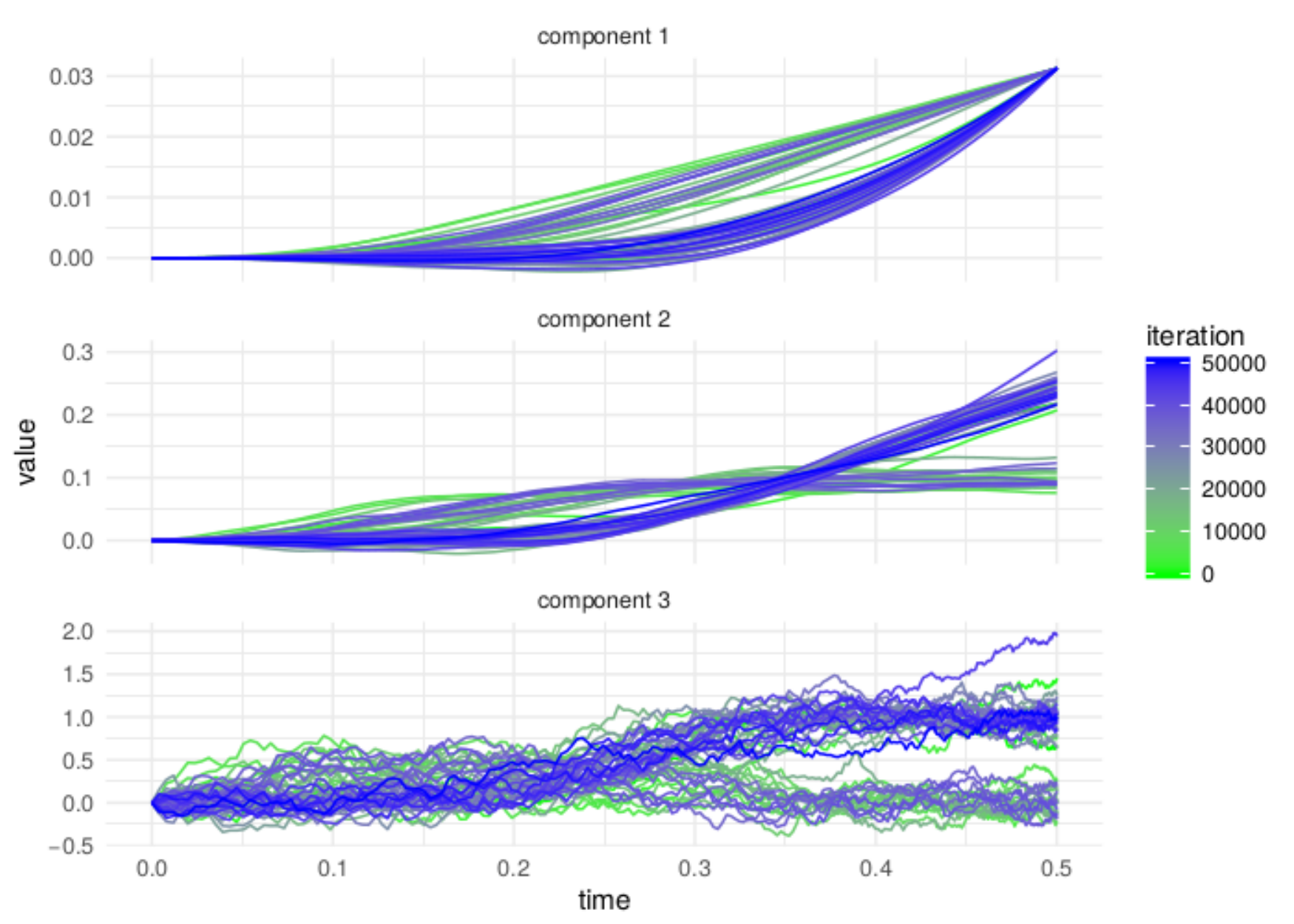}
	\end{center}
	\caption[]{Sampled guided diffusion bridges when conditioning on  $L X_T=1/32$ with 
		$L=\Bm 1 & 0 & 0\Em^{\T}$ 
		 in Example \ref{numex:nclar3}.}
	\label{fig:nclar-partialcond}
\end{figure}

\end{ex}

\begin{ex}
\cite{ditlevsen-samson2017} consider the stochastic hypo-elliptic FitzHugh-Nagumo model, which is specified by the SDE
\begin{equation}\label{eq:fitz} \dd X_t = \Bm 1/\eps & -1/\eps \\ \ga & -1 \Em X_t \dd t + \Bm -X_{t,1}^3/\eps +s/\eps \\ \beta\Em \dd t + \Bm 0 \\ \si\Em \dd W_t,\qquad X_0=x(0). \end{equation}
Only the first component is observed, hence $L=\Bm 1 & 0\Em$. We consider the same parameter values as in \cite{ditlevsen-samson2017}:
\begin{equation}\label{eq:fitz-parvalues}
 \Bm \eps & s & \ga & \beta & \sigma\Em =\Bm 0.1& 0 & 1.5 & 0.8 & 0.3\Em. \end{equation}
A realisation of a sample path on $[0,10]$ is given in Figure \ref{fig:fh-samplepath}. 
\begin{figure}
	\begin{center}
			\includegraphics[scale=0.7]{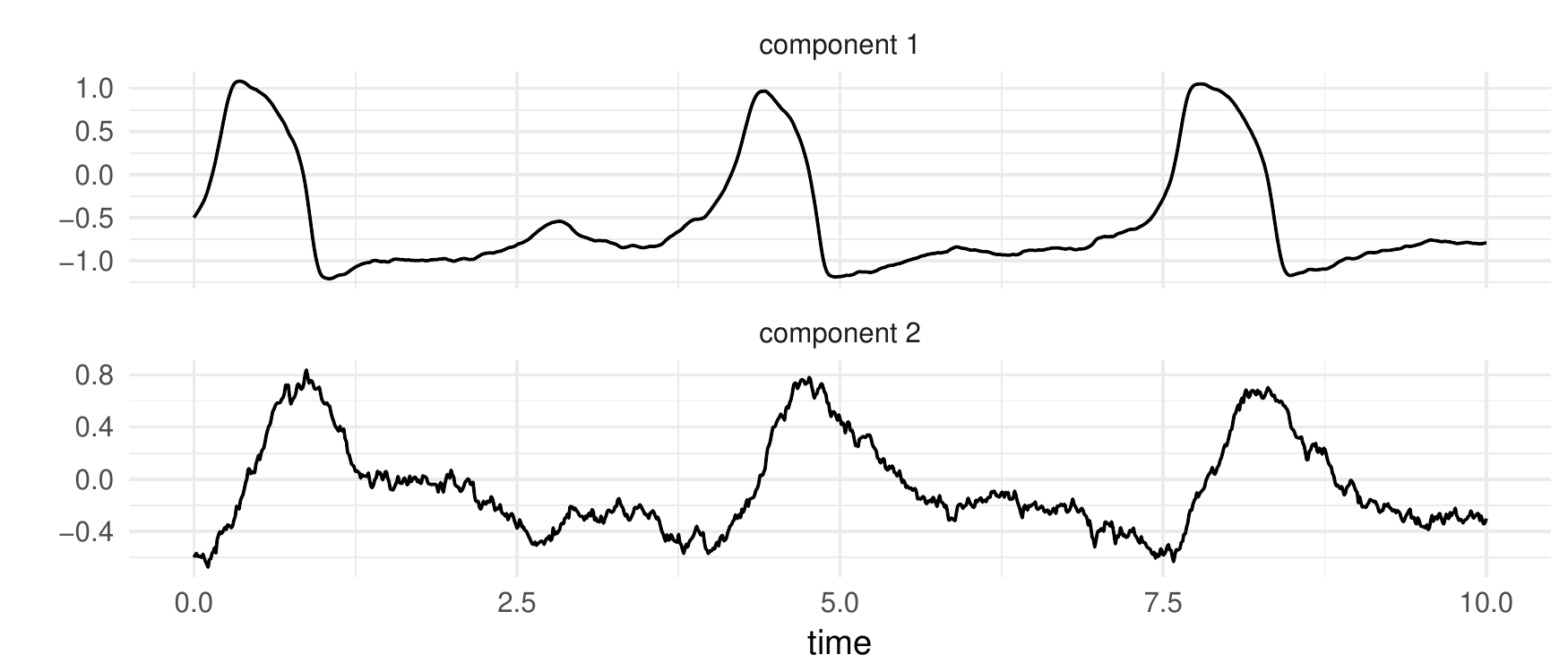}
	\end{center}
	\caption{A realisation of a sample path of the FitzHugh-Nagumo model as specified in Equation \eqref{eq:fitz}, with parameter values as in \eqref{eq:fitz-parvalues}. \label{fig:fh-samplepath}}
\end{figure}

While this example formally does not fall into our setup, the conditions of Assumption \ref{ass:sigma_const} strongly suggest that  the component of the drift with smooth path, i.e.\ the first component of $b$, certainly needs to match at the observed endpoint. 
We construct guided proposals by linearising the drift term $X_{t,1}$ at the observed endpoint $v$. Hence,  using that  $-x^3\approx 2 a^3 -3 a^2 x$ for $x$ near $a$, we take
\[ \tilde{B}(t)=  \Bm 1/\eps-3 v^2/\eps & -1/\eps \\ \ga & -1 \Em, \quad \tilde\beta(t) = \Bm 2v^3/\eps +s/\eps \\ \beta\Em
\quad \tilde\sigma= \Bm 0 \\ \si\Em. \]

To illustrate the performance of our method, we take a rather challenging, strongly nonlinear problem. We consider bridges over the time-interval $[0,T]$ with $T=2$, starting at $x(0)=\Bm -0.5& -0.6\Em$. In Figure \ref{fig:forwards} we forward simulated $100$ paths, to access the behaviour of the process.
Next, we consider two cases:
\begin{enumerate}
  \item[(a)] Conditioning on the first coordinate at the endpoint of a ``typical'' path; we took $v=-1$.
  \item[(b)] Conditioning on the first coordinate at the endpoint of an ``extreme'' path; we took $v=1.1$.
\end{enumerate}
We ran the sampler for $50.000$ iterations, using $\rho=0$ and $\rho=0.9$ in cases (a) and (b) respectively. The percentage of accepted proposals in the  Metropolis-Hastings step equals $64\%$ and $21\%$ respectively.   In Figures \ref{fig:fh-typical} and \ref{fig:fh-extreme} we plotted every $1000$-th sampled path out of the $50.000$ iterations for the ``typical'' and ``extreme'' cases respectively. 
Figure \ref{fig:fh-typical} immediately demonstrates that  for a typical path, guided proposals very closely resemble true bridges (using Figure \ref{fig:forwards} as comparison). 
To assess whether in the ``extreme'' case the sampled bridges resemble true bridges, we also forward simulated the process, only keeping those paths for which $|L x_T-v| <0.01$. The resulting paths are shown in Figure \ref{fig:bridges_extreme_blindforward} and resemble those in Figure  \ref{fig:fh-extreme} quite well. 

This example is extremely challenging in the sense that we take a rather long time horizon ($T=2$), the noise-level on the second coordinate is small and the drift of the diffusion is highly nonlinear. As a result,  the true distribution of bridges is multimodal. Even in much simpler settings, sampling form a multimodal distribution using MCMC constitutes a difficult problem. Here, the multimodality  is recovered remarkably well by our method as can be seen from Figure \ref{fig:fh-extreme}. 

\begin{figure}
	\begin{center}
	\includegraphics[scale=0.7]{./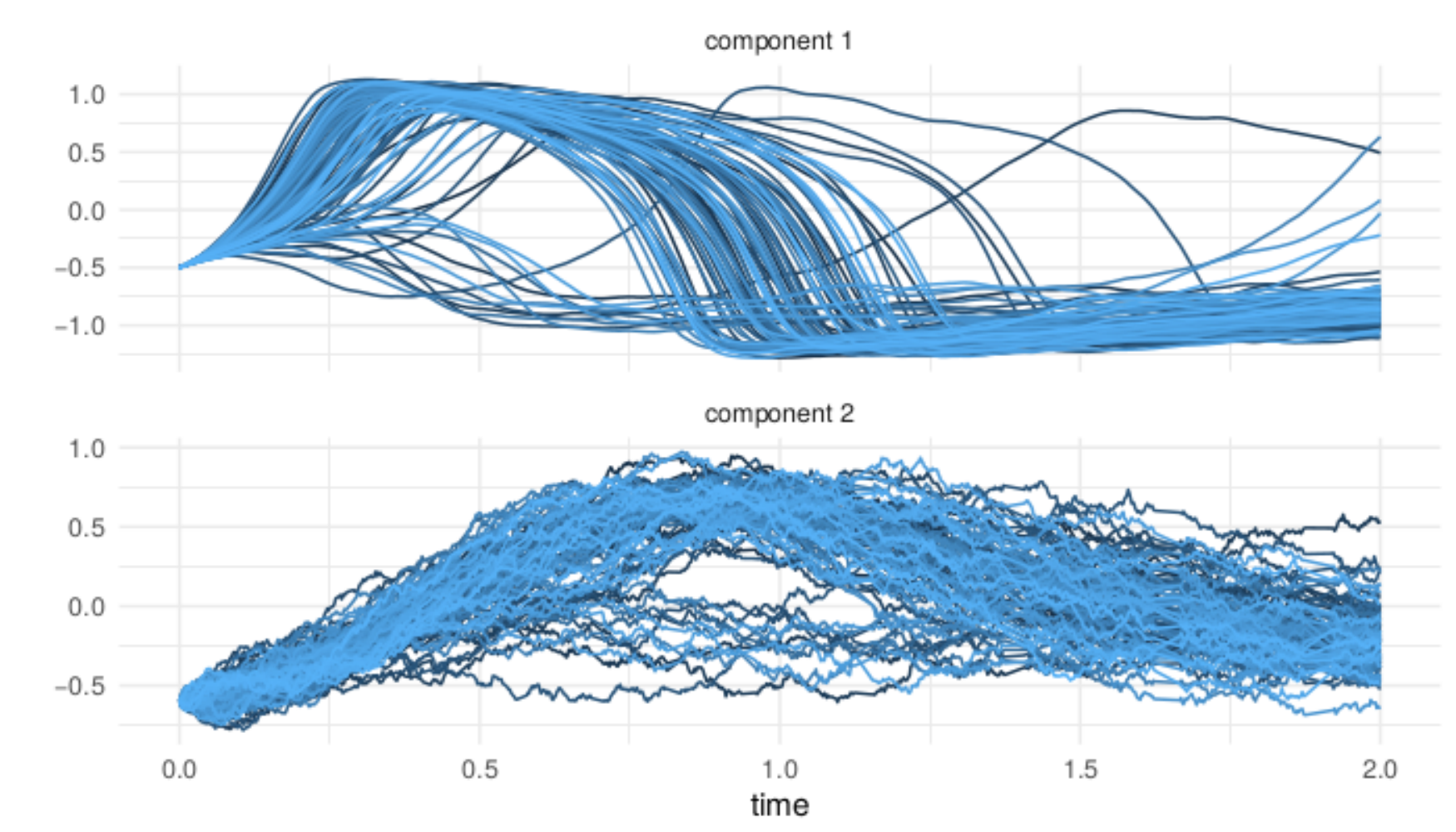}
	\end{center}
		\caption{Realisations of $100$ forward sampled paths for the FitzHugh-Nagumo model as specified in Equation \eqref{eq:fitz}, with parameter values as in \eqref{eq:fitz-parvalues}.  \label{fig:forwards}}

\end{figure}

\begin{figure}
	\begin{center}
		\includegraphics[scale=0.7]{./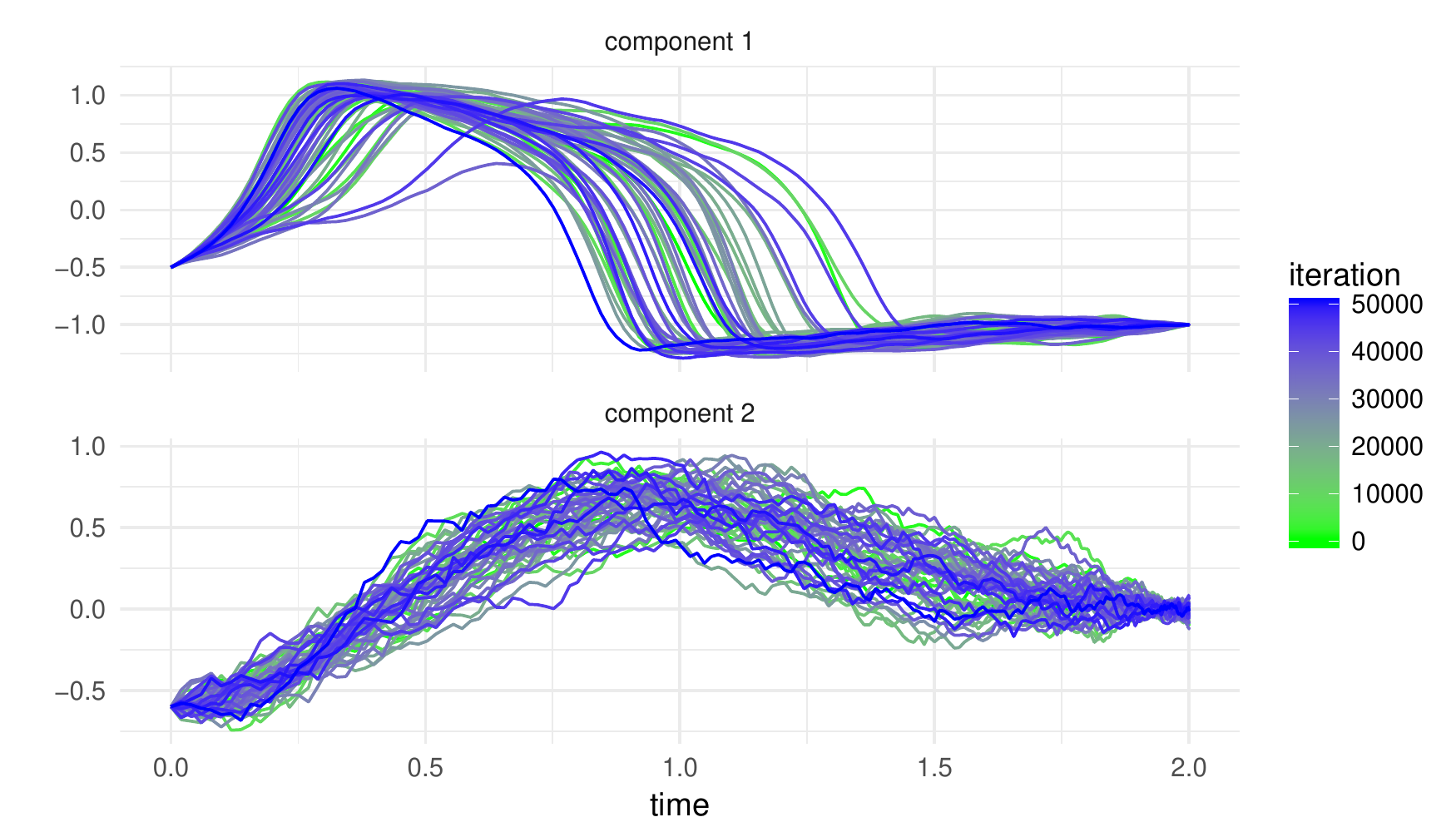}
	\end{center}
	\caption{Sampled guided diffusion bridges when conditioning on $v=-1$ (typical case).  \label{fig:fh-typical}}
\end{figure}

\begin{figure}
	\begin{center}
		\includegraphics[scale=0.7]{./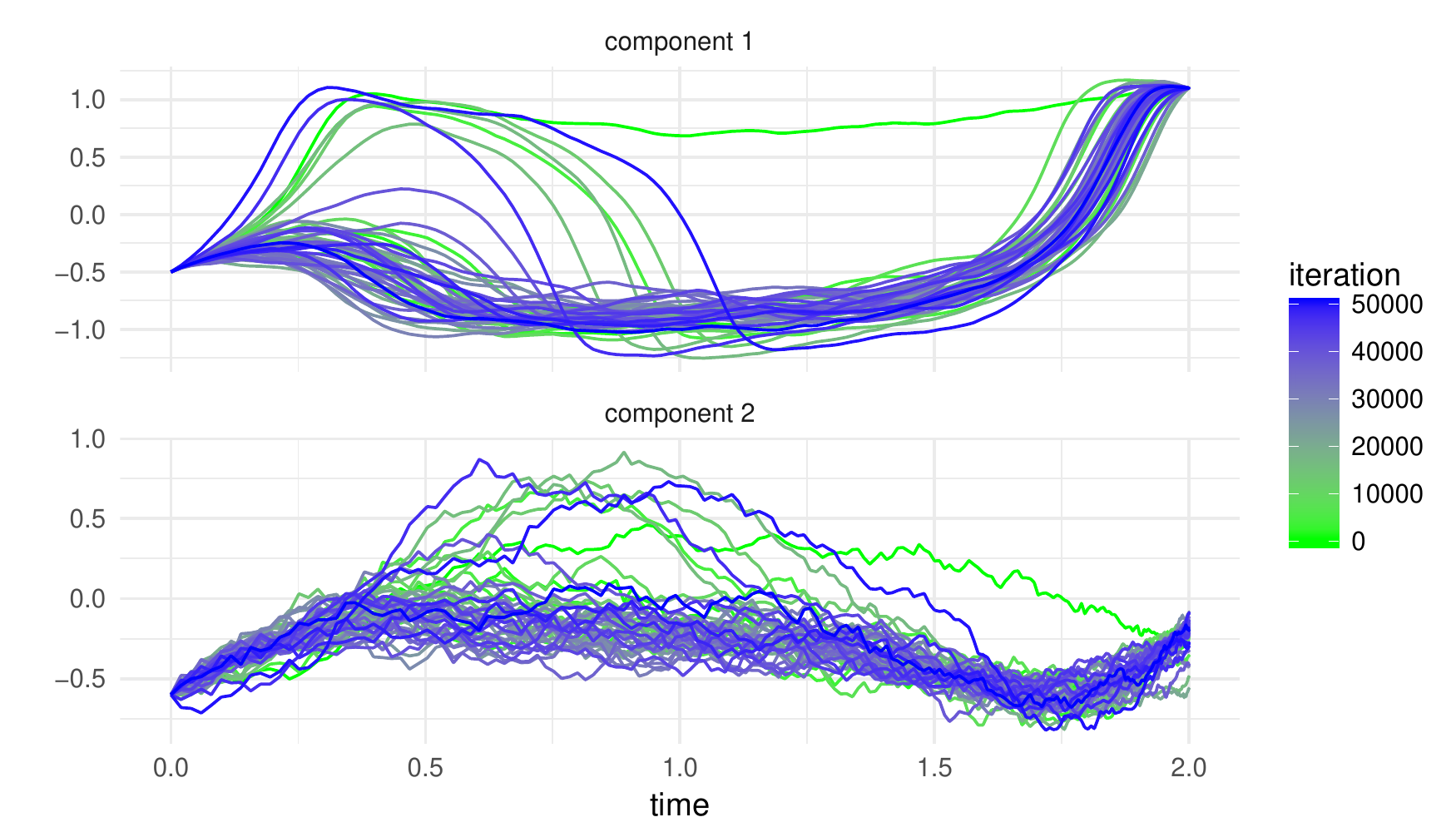}		
	\end{center}
	\caption{Sampled guided diffusion bridges when conditioning on $v=1.1$ (extreme case). The ``outlying'' green curve corresponds to the initialisation of the algorithm. 
	\label{fig:fh-extreme}}
\end{figure}

\begin{figure}
	\begin{center}
	\includegraphics[scale=0.7]{./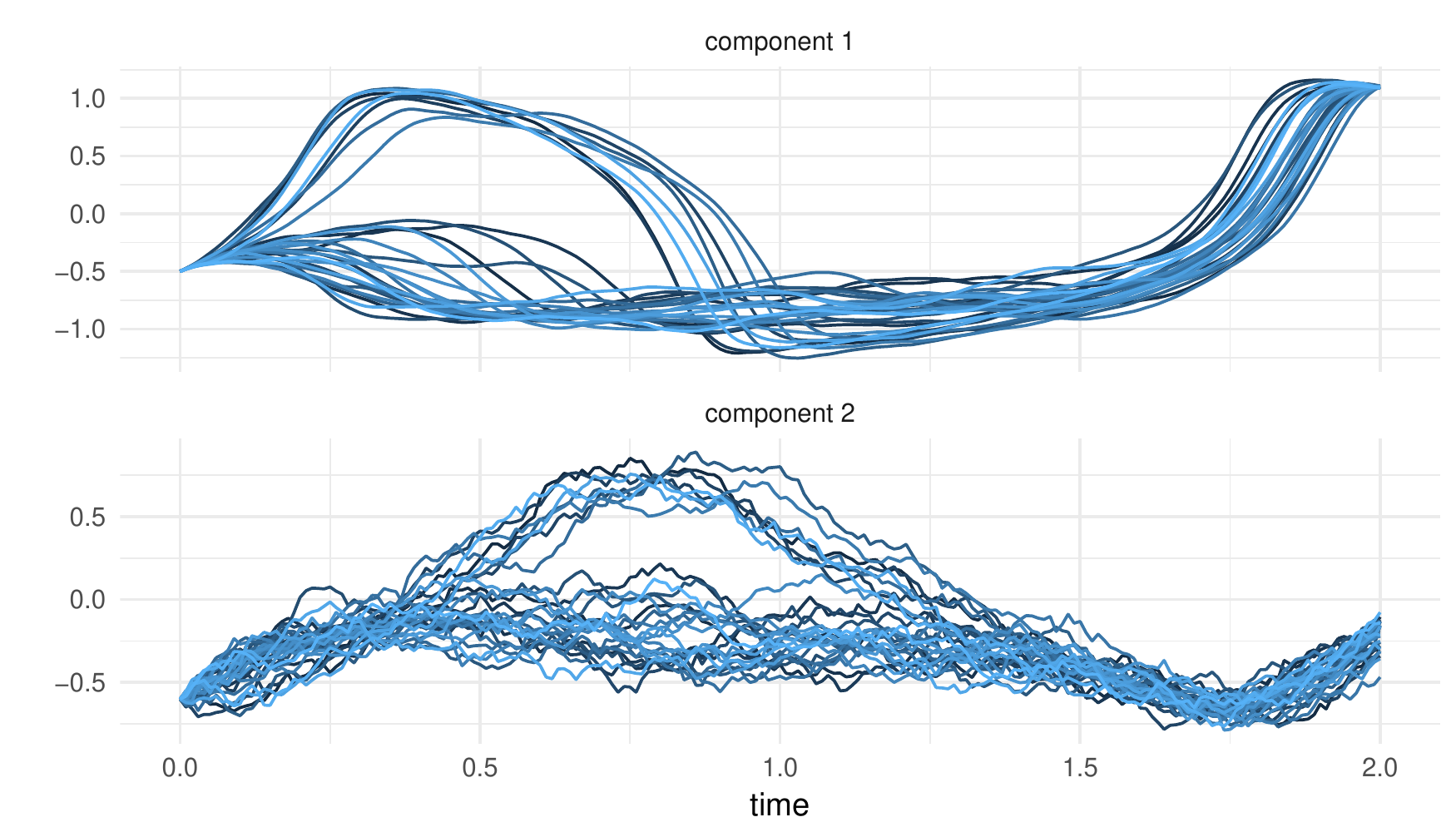}
	\end{center}
		\caption{Realisations of $30$ forward sampled paths for the FitzHugh-Nagumo model as specified in Equation \eqref{eq:fitz}, with parameter values as in \eqref{eq:fitz-parvalues}. Only those paths are kept for which  $|L x_T-v| <0.01$, where $v=1.1$ (the conditioning for the ``extreme'' case).   \label{fig:bridges_extreme_blindforward}}
\end{figure}
	
\end{ex}

\begin{rem}
We have chosen for $50.000$ iterations in the chosen examples. However, qualitatively the same figures of simulating bridges can be obtained by reducing the  number of iterations to approximately  $10.000$.
\end{rem}

\subsection{Numerical checks on the validity of guided proposals}
In this section we first investigate the quality of guided proposals over long time spans. Next, we empirically demonstrate that the conditions of our main theorem, especially Assumption \ref{ass:sigma_const}, is stronger than actually needed. In each numerical experiment we compare two histogram estimators for $v \mapsto \rho(0,x_0; T,v)$. The first estimator is obtained by making a histogram of a large number of forward simulations of the unconditioned diffusion process. Denote by $\{A_k\}$ the bins of this histogram.   A second estimator is obtained by using the equality
\[ \rho(0,x_0; T, v) =   \tilde \rho(0, x_0; T, v) \expec{\Psi_T(X^{\circ, T, v}) }
\]
which is a direct consequence of Theorem \ref{thm:main}. 
Note that we  extended the notation to highlight that  $\rho$, $\tilde \rho$ and $X^\circ$ depend on $T$ and $v$.
We use the relation in the previous display as follows: for each bin $A_k$
\[ 
\int_{A_k} \rho(0,x_0; T, v) \dd v =\expec{\ind_{A_k}(\tilde V) \frac{\tilde \rho(0, x_0; T, \tilde V)}{q( \tilde V)}  \Psi_T(X^{T, \tilde V,\circ}) }
\]
where $\tilde V$ is sampled from the density $q$. 
Hence, $\int_{A_k} \rho(0,x_0; T, v) \dd v$ can be approximated using importance sampling where repeatedly first the endpoint $v$ is sampled from $q$ and subsequently  a guided proposal is simulated that is conditioned to hit $v$ at time $T$. In our experiments we took the importance sampling density $q$ to be the  Gaussian density with mean and covariance obtained from the unconditioned forward simulated endpoint values. 

Note that the setup is such that this is feasible, at least when estimating the entire histogram, but of course it would be prohibitively expensive to use forward sampling to compute the density in a single small bin or at a single point.

\begin{ex}
Consider the non-linear hypo-elliptic 2d system
determined by drift $b(t, x) = B x \,+\, \beta(t) \,+\, \Bm 0; \, \tfrac12 \sin(x_2) \Em$
with $B =\tfrac{1}{10} \Bm -1 & 1; 0 & {-1}\Em $, $\beta(t) = \Bm 0 & \tfrac12\sin(t/4) \Em$, and dispersion $\sigma \equiv \Bm 0;\; 2\Em$ (with a semicolon separating matrix rows).
Starting at $X_0 = \Bm 0 ;\,  {-\pi/2}\Em$,  we assume to observe $V = L X_T + Z$ with  $L = \Bm 1 & 1\Em$, 
 $Z \sim N(0, 10^{-6})$. We consider both $T=4\pi$ and $T=40\pi$, the latter to check how guided proposals perform over a very long time span.
We take guided proposals derived from  $\tilde b(t, x) = B x + \beta(t)$ and $\tilde\sigma=\sigma$. 

In  Figure \ref{fig:dens-est}  the two histograms are contrasted.  Interestingly, the results show no degradation in performance when increasing $T$ by an order.
For the simulations we took $K = 70$ bins and $100\,000$ samples of $V$ respective draws from $\tilde V$ (thus on average approximately $1\,500$ draws per bin) and time grid $t_i =  s_i (2 -  s_i/T)$ with $s_i = h \, i$, $h= 0.01$, therefore decreasing step-size towards $T$ while keeping the number of grid points equal to $T/h$, as suggested in \cite{vdm-s-estpaper}. The implementation is based on our Julia package \cite{BridgeSDEInference} with package co-author Marcin Mider. The figures also serve to verify the correctness of the implementation.
\end{ex}

\begin{ex}\label{example:sigma(x)}
It is interesting to ask if -- numerically speaking -- the change of measure is successful in cases where  $\sigma$ depends on $x$ and the fourth inequality of Assumption \ref{ass:controll} cannot be verified. For that purpose, we slightly adjust the setting of the previous example by now taking $L = \Bm 1 & 0\Em$ and 
$\sigma(t, x) = \Bm 0; 2 + \tfrac12\cos(x_2)  \Em$ and  repeating the experiment.
In this case we chose $\tilde \sigma = \sigma(0,  \Bm 0 ; 0\Em)$. 
As the problem is more difficult, we took less bins ($K = 50$)  and set $h = 0.005$ (otherwise keeping our previous choices.)
The resulting Figure \ref{fig:dens-est2} shows  no indication of lack of absolute continuity or loss of probability mass. This strongly indicates that guided proposals can perform perfectly fine for the present complex setting that includes state-dependent diffusion coefficient and hypo-ellipticity. 

However, care is needed, in Figure \ref{fig:dens-est3} we show the result for the same experiment, but with $L$ changed to  $L = \Bm 1 & 1\Em$. Here,  the loss of probability mass indicates violation of absolute continuity. We conjecture that $L a(T,v) L^\T = L \tilde{a}(T) L^\T$ may be the ``right'' restriction on choosing $\tilde{a}(T)$. To obtain empirical evidence, we redid the experiment with $L = \Bm 1 & 1\Em$ but now $\sigma(t, x) = \Bm 0; 2 + \tfrac12\cos(Lx)  \Em$. In this case one can match the diffusivity at time $T$ by taking $\tilde\sigma = \Bm 0; 2 + \tfrac12\cos(v)  \Em$. The resulting figure (Figure \ref{fig:dens-ll1match}) indicates no loss of absolute continuity, supporting the conjecture.

\end{ex}

\begin{figure}
\begin{center}
\includegraphics[width=0.7\linewidth]{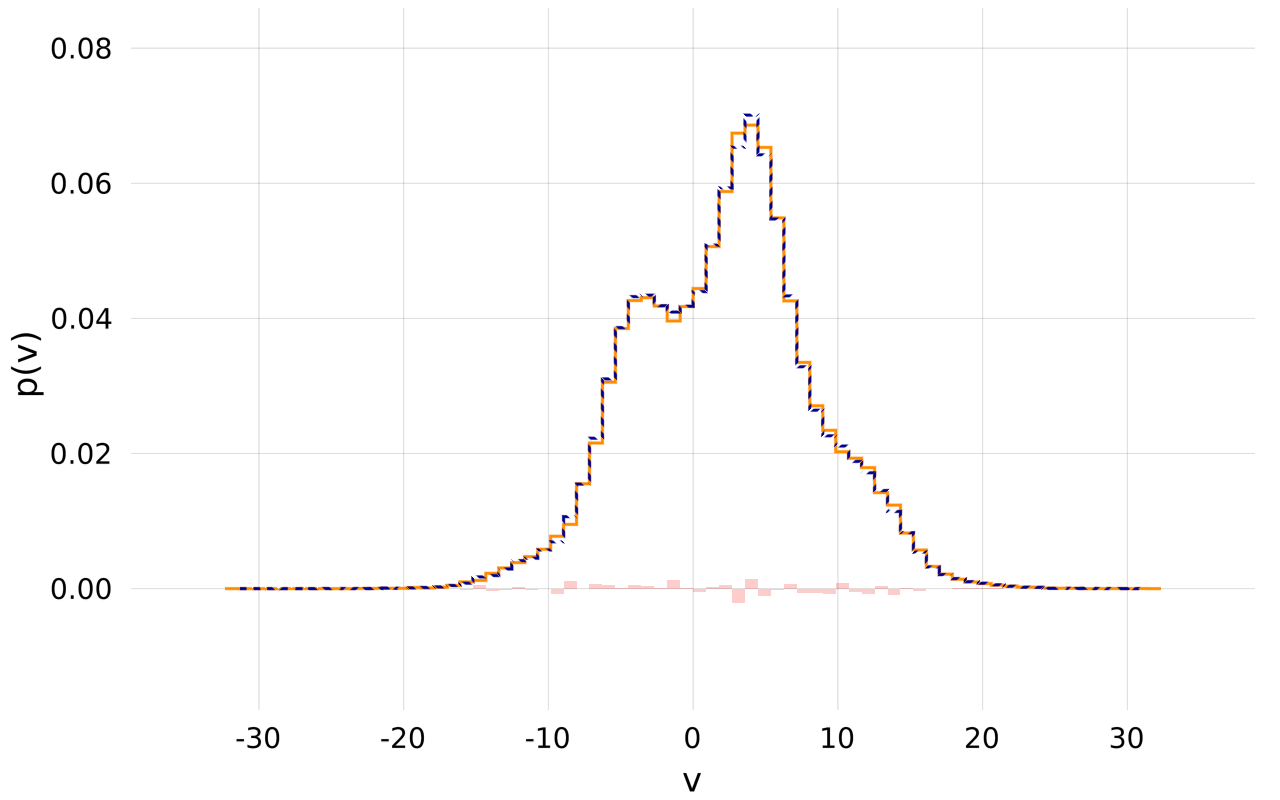}

\includegraphics[width=0.7\linewidth]{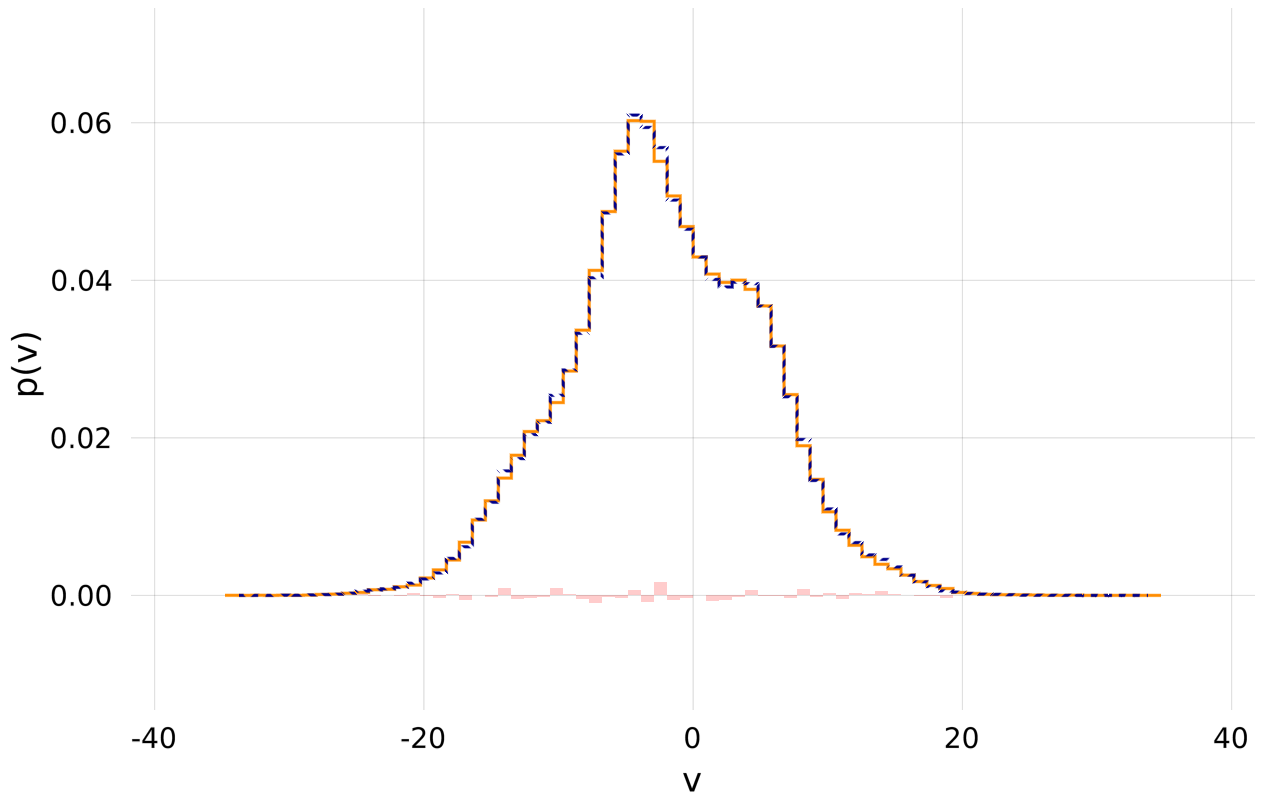}
\caption{Dark orange: Histogram baseline estimate of the density of observation  $V = L X_T + Z$, $Z \sim N(0, 10^{-6})$ from forward simulation. Dashed blue: observation density estimate using weighted histogram 
of points $\tilde V$ sampled from Gaussian distribution weighted with
importance weights from guided proposals steered towards those points. Top: $T = 4\pi$. Bottom: $T = 40\pi$.  Pink: difference between histograms.}
\label{fig:dens-est}
\end{center}

\end{figure}

\begin{figure}
\begin{center}
\includegraphics[width=0.7\linewidth]{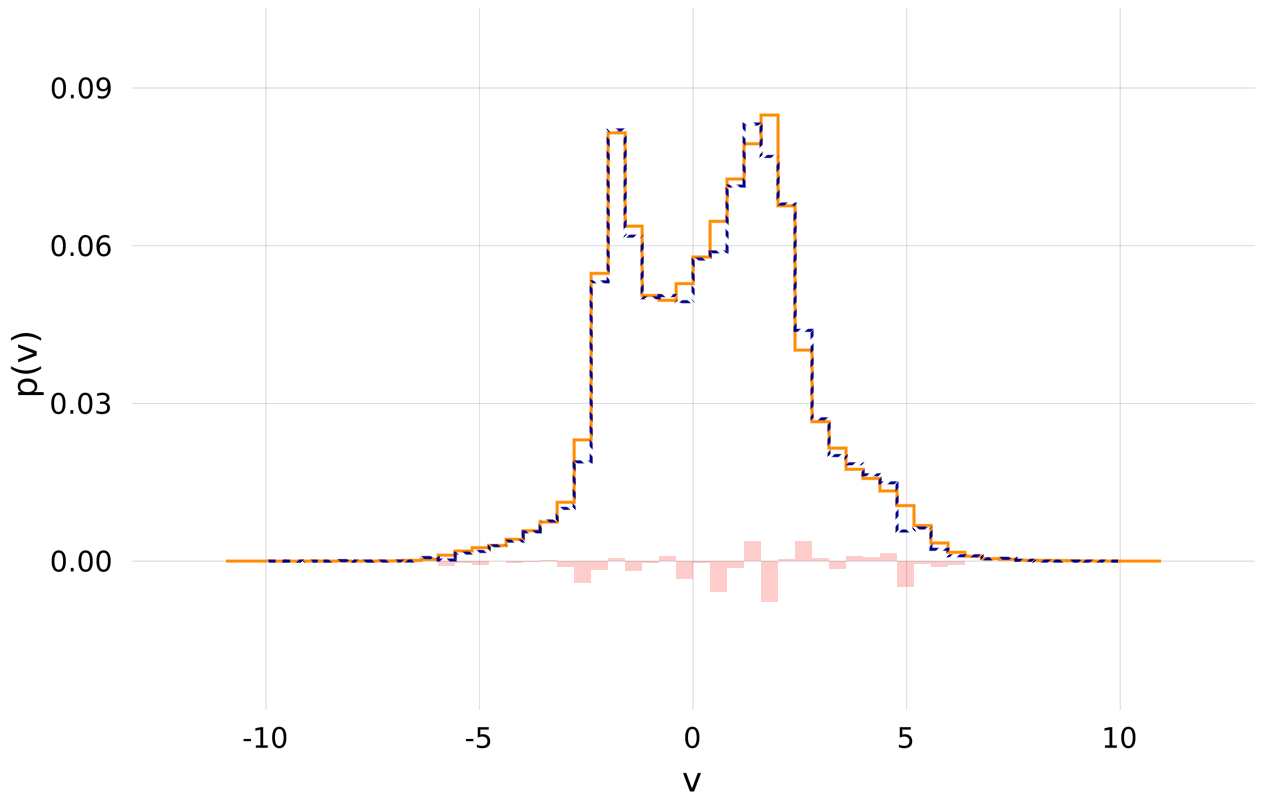}
\caption{As Figure \ref{fig:dens-est}, but estimates for the model with observation operator $L =\left[1 \; 0\right]$ and $\sigma(t, x) = \left[ 0; 2 + \tfrac12\cos(x_2) \right]$, at $T = 4\pi$.}
\label{fig:dens-est2}
\end{center}

\end{figure}

\begin{figure}
\begin{center}
\includegraphics[width=0.7\linewidth]{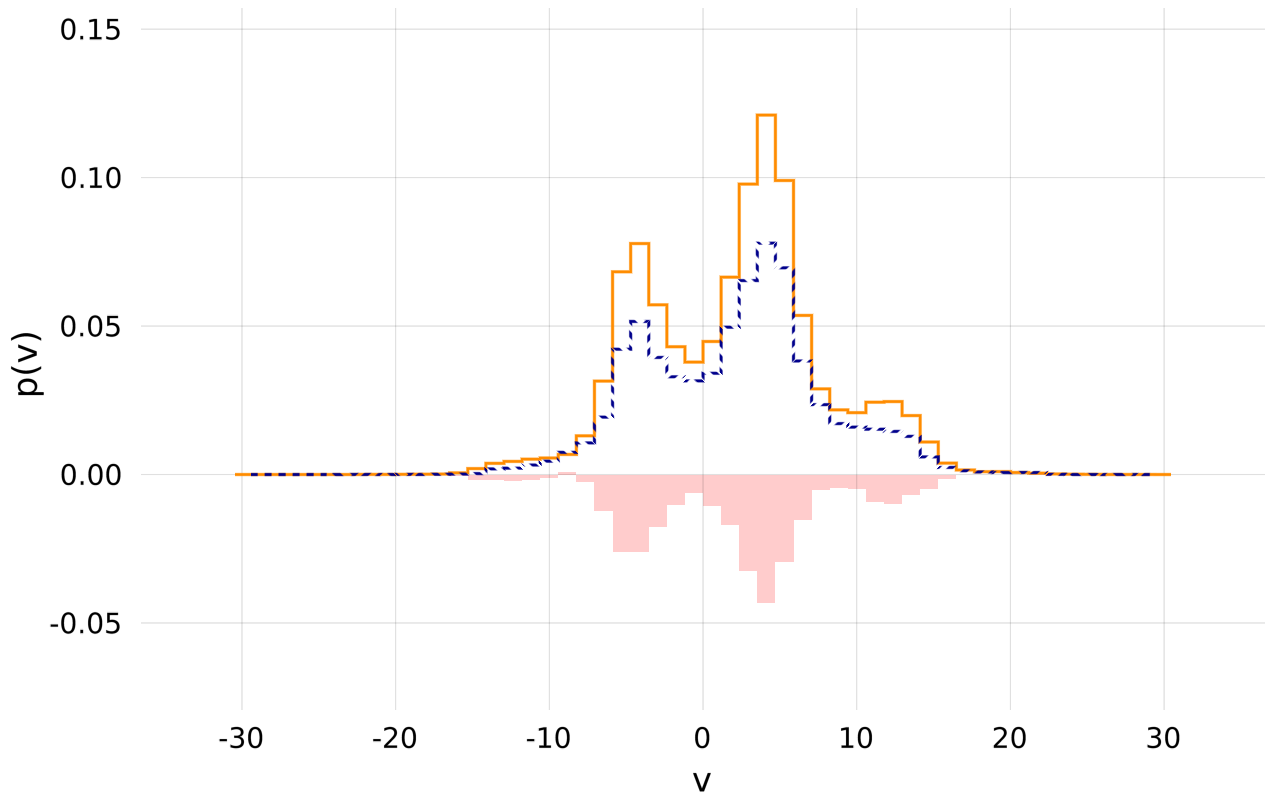}
\caption{As Figure \ref{fig:dens-est}, but estimates for the model with observation operator $L =\left[1 \; 1\right]$ and $\sigma(t, x) = \left[ 0; 2 + \tfrac12\cos(x_2) \right]$, at $T = 4\pi$.
Note the loss of probability mass indicating lack of absolute continuity.
}
\label{fig:dens-est3}
\end{center}

\end{figure}

\begin{figure}
\begin{center}
\includegraphics[width=0.7\linewidth]{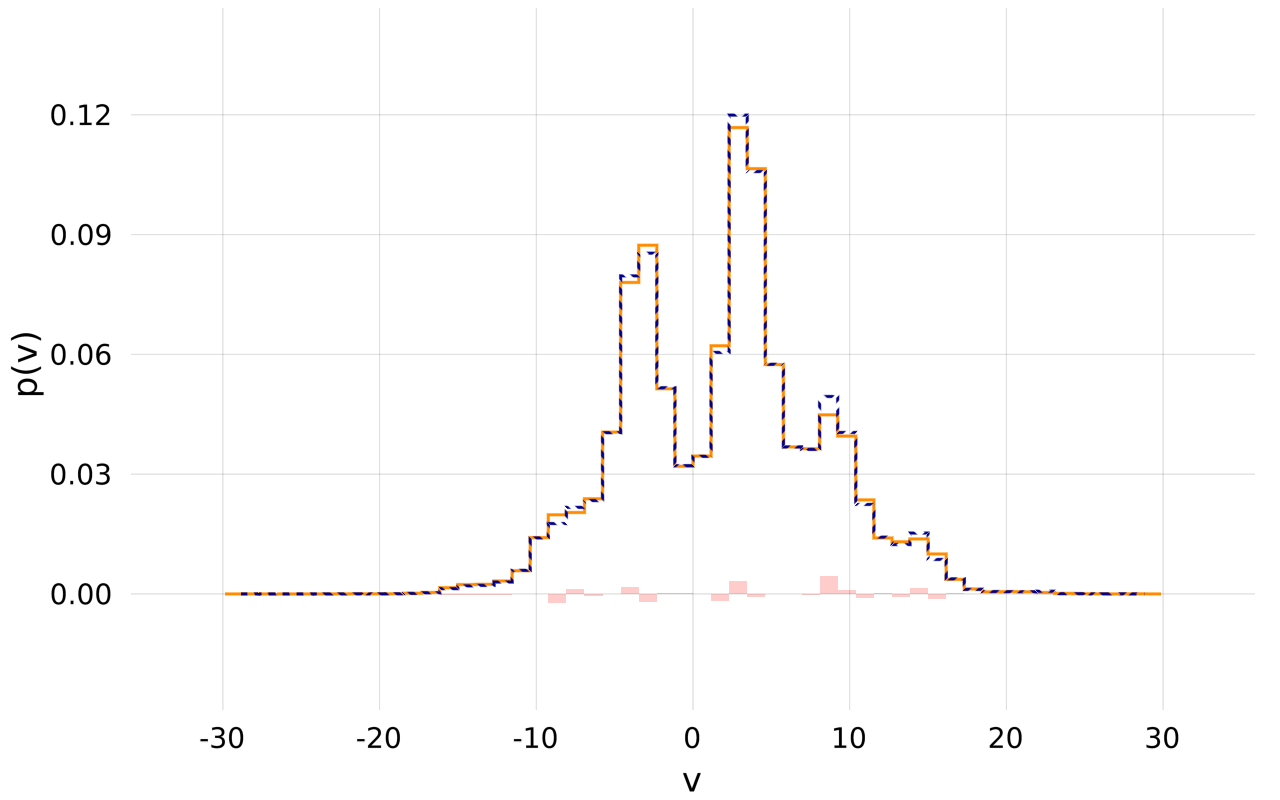}
\caption{As Figure \ref{fig:dens-est}, but estimates for the model with observation operator $L =\left[1 \; 1\right]$ and $\sigma(t, x) = \left[ 0; 2 + \tfrac12\cos(x_1 + x_2) \right]$, at $T = 4\pi$.
}
\label{fig:dens-ll1match}
\end{center}

\end{figure}

%%%%%%%%%%%%%%%%%%%%%%%%%%%%%%%%%%%%%%%%%%%
\section{Proofs of Proposition \ref{thm:limitU} and Corollary \ref{cor:limit-behav-ue}}
 \label{sec:behaviour-near-endpoint}
 
 In this section we give proofs of the results from Section \ref{sec:behav} on the behaviour of guided proposals near the conditioning point. 
For clarity, the proof   of Proposition \ref{thm:limitU} is split up over subsections \ref{subsec:centringscaling}, \ref{subsec:recap} and \ref{subsec:prooflimiting}. The proof of  Corollary \ref{cor:limit-behav-ue}  is in section \ref{subsec:proofcor}. 
\subsection{Centring and scaling of the guided proposal} \label{subsec:centringscaling}
To reduce notational overhead, we write  $a_t\equiv a(t,X^\circ_t)$. Then $\tilde{b}_t$, $b_t$ and $\si_t$ are defined similarly. Our starting point is the expression for $\tilde{r}$ in \eqref{eq:tilder}. 

\begin{lem}\label{lem:sde-Z}
If we define
\[	Z_t = v- \mu(t)-L(t) X^\circ_t ,\] then
\[ \dd Z_t = L(t) \left(\tilde{b}_t-b_t\right)\dd t + L(t)\si_t \dd W_t  - L(t)a_t L(t)^\T M(t) Z_t \dd t.
\]
\end{lem}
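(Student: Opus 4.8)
The statement is an application of Itô's formula to the linear (in $X^\circ$) expression $Z_t = v - \mu(t) - L(t)X^\circ_t$. The plan is to differentiate each of the three $t$-dependent pieces and collect terms. First I would recall the SDE for $X^\circ$, namely $\dd X^\circ_t = b^\circ(t,X^\circ_t)\dd t + \si_t \dd W_t$ with $b^\circ(t,x) = b(t,x) + a(t,x)\tilde r(t,x)$, and substitute the closed form of the guiding term from Lemma \ref{lem:form-rtilde}, $\tilde r(t,x) = L(t)^\T M(t)(v - \mu(t) - L(t)x) = L(t)^\T M(t) Z_t$ when evaluated along $X^\circ$. Hence $a_t\tilde r(t,X^\circ_t) = a_t L(t)^\T M(t) Z_t$, which will supply the $-L(t)a_t L(t)^\T M(t)Z_t\,\dd t$ term after multiplication by $-L(t)$.

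**Key steps.** Second, I would compute the deterministic derivatives of $\mu$ and $L$. From the backward ODEs recorded later in the paper (or directly from the definitions $\mu(t) = \int_t^T L(s)\tilde\beta(s)\dd s$ and $L(t) = L\Phi(T,t)$, using $\frac{\dd}{\dd t}\Phi(T,t) = -\Phi(T,t)\tilde B(t)$), one gets $\dot\mu(t) = -L(t)\tilde\beta(t)$ and $\dot L(t) = -L(t)\tilde B(t)$. Then
\[
\dd Z_t = -\dot\mu(t)\dd t - \dot L(t) X^\circ_t\,\dd t - L(t)\,\dd X^\circ_t = L(t)\tilde\beta(t)\dd t + L(t)\tilde B(t) X^\circ_t\,\dd t - L(t)\,\dd X^\circ_t.
\]
Third, I would substitute the SDE for $X^\circ$ and use $\tilde b(t,x) = \tilde B(t)x + \tilde\beta(t)$, so that $L(t)\tilde\beta(t)\dd t + L(t)\tilde B(t)X^\circ_t\,\dd t = L(t)\tilde b_t\,\dd t$. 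This yields
\[
\dd Z_t = L(t)\tilde b_t\,\dd t - L(t)b_t\,\dd t - L(t)a_t L(t)^\T M(t) Z_t\,\dd t - L(t)\si_t\,\dd W_t.
\]
Finally, combining the first two drift terms into $L(t)(\tilde b_t - b_t)\dd t$ gives the claimed identity, up to the sign of the martingale part; since $W$ is a Wiener process, $-L(t)\si_t\,\dd W_t$ has the same law as $+L(t)\si_t\,\dd W_t$, and in this paper the statement is understood in the (weak) sense in which only the law of $Z$ matters, so the sign is immaterial. (Alternatively one simply re-reads the computation keeping the $-L(t)\,\dd X^\circ_t$ sign straight; the stated SDE is correct as written if one absorbs the sign into a redefinition $W \mapsto -W$.)

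**Main obstacle.** There is no real analytic difficulty here — the map is affine in $X^\circ$, so Itô's formula degenerates to the ordinary product/chain rule with no second-order correction. The only point that requires care is bookkeeping the signs and confirming $\dot L(t) = -L(t)\tilde B(t)$, $\dot\mu(t) = -L(t)\tilde\beta(t)$ from the (backward-in-time) definitions; the sign of the stochastic integral is the one place where I would double-check consistency with the convention used elsewhere in the paper, and I expect the authors simply identify $-L(t)\si_t\,\dd W_t$ with $L(t)\si_t\,\dd W_t$ in distribution since only the law of $Z$ (equivalently of $\ZD{t} = \Delta(t)Z_t$) enters Proposition \ref{thm:limitU}.
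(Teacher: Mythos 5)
Your proof is correct and follows exactly the paper's own route: differentiate the affine map, use $\dot L(t)=-L(t)\tilde B(t)$ and $\dot\mu(t)=-L(t)\tilde\beta(t)$ to identify $-\dot L(t)X^\circ_t-\dot\mu(t)=L(t)\tilde b_t$, then substitute the SDE for $X^\circ$ and the closed form $\tilde r(t,X^\circ_t)=L(t)^\T M(t)Z_t$. Your remark about the sign of the martingale term is also well taken: the literal computation gives $-L(t)\si_t\,\dd W_t$, so the $+$ in the lemma's displayed SDE is a harmless sign slip (it only ever enters subsequent arguments through the quadratic variation of the stochastic integral, as in the bound on $N_t-\tfrac12[N]_t$), and can equivalently be absorbed by replacing $W$ with $-W$.
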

\begin{proof}
We have
\[ \dd Z_t = -\left(\frac{\dd}{\dd t} L(t)\right) X^\circ_t \dd t -\frac{\dd}{\dd t} \mu(t)-L(t) \dd X^\circ_t. \]
The results now follows because the first two terms on the right-hand-side together equal $L(t) \tilde{b}(t,X^\circ_t)$.
\end{proof}

\begin{lem}\label{lem:ZMZ}
We have
\begin{align*}
	\frac12 \dd \left(Z_t^\T M(t) Z_t\right)  = &\frac12 Z_t^\T M(t) L(t) \left(\tilde{a}(t)-a_t\right) L(t)^\T M(t) Z_t \dd t \\ & +
	Z_t^\T M(t) L(t) \left(\tilde{b}_t-b_t\right) \dd t  \\& + Z_t^\T M(t) L(t) \si_t \dd W_t -\frac12 Z_t^\T M(t) L(t) a_t L(t)^\T M(t) Z_t \dd t \\ & +
	\tr\left(L(t) a_t L(t)^\T M(t)\right)\dd t.  
\end{align*}
\end{lem}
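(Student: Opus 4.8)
The plan is to apply Itô's formula to the scalar process $t \mapsto Z_t^\T M(t) Z_t$, using the SDE for $Z_t$ established in Lemma~\ref{lem:sde-Z} together with the fact that $M(t) = [\Md(t)]^{-1}$ is differentiable with a known derivative. First I would record that, by differentiating the identity $M(t)\Md(t) = I$, we get $\dot M(t) = -M(t)\dot{\Md}(t) M(t) = M(t) L(t)\tilde a(t) L(t)^\T M(t)$, since $\dot{\Md}(t) = -L(t)\tilde a(t) L(t)^\T$ from the definition of $\Md$. This accounts for the ``$+\tilde a(t)$'' part of the first term on the right-hand side of the claimed identity.

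Next I would write $Z_t^\T M(t) Z_t$ as a function $F(t, Z_t)$ with $F(t,z) = z^\T M(t) z$ and apply the multidimensional Itô formula. The drift of $Z_t$ is $L(t)(\tilde b_t - b_t) - L(t) a_t L(t)^\T M(t) Z_t$ and the diffusion coefficient is $L(t)\si_t$, so the quadratic variation contributes $\dd\langle Z\rangle_t = L(t)\si_t \si_t^\T L(t)^\T \dd t = L(t) a_t L(t)^\T \dd t$ (recall $a_t = \si_t\si_t^\T$). Collecting terms: the $\partial_t F$ term gives $Z_t^\T \dot M(t) Z_t = Z_t^\T M(t) L(t)\tilde a(t) L(t)^\T M(t) Z_t$; the $\nabla_z F \cdot \dd Z_t$ term gives $2 Z_t^\T M(t)\,\dd Z_t$, i.e. $2Z_t^\T M(t) L(t)(\tilde b_t - b_t)\dd t + 2 Z_t^\T M(t) L(t)\si_t \dd W_t - 2 Z_t^\T M(t) L(t) a_t L(t)^\T M(t) Z_t \dd t$; and the second-order term $\tfrac12 \tr(\nabla_z^2 F \cdot \dd\langle Z\rangle_t) = \tfrac12 \tr(2M(t)\, L(t) a_t L(t)^\T)\dd t = \tr(M(t) L(t) a_t L(t)^\T)\dd t = \tr(L(t) a_t L(t)^\T M(t))\dd t$ by cyclicity of the trace.

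Finally I would multiply through by $\tfrac12$ and match terms. The $\partial_t F$ contribution $\tfrac12 Z_t^\T M(t) L(t)\tilde a(t) L(t)^\T M(t) Z_t \dd t$ combined with half of the last piece of the $\nabla_z F\cdot\dd Z_t$ term, namely $-\tfrac12\cdot 2\cdot\tfrac12 Z_t^\T M(t) L(t) a_t L(t)^\T M(t) Z_t\,\dd t$, produces exactly $\tfrac12 Z_t^\T M(t) L(t)(\tilde a(t) - a_t) L(t)^\T M(t) Z_t\,\dd t - \tfrac12 Z_t^\T M(t) L(t) a_t L(t)^\T M(t) Z_t\,\dd t$, matching lines one and four of the claim; the remaining three summands ($Z_t^\T M(t) L(t)(\tilde b_t - b_t)\dd t$, the stochastic integral, and $\tr(L(t) a_t L(t)^\T M(t))\dd t$) match lines two, three and five directly. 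There is no real obstacle here: the only point requiring care is the bookkeeping on how the $-2 Z_t^\T M(t) L(t) a_t L(t)^\T M(t) Z_t$ term from the drift of $Z_t$ splits, after multiplication by $\tfrac12$, into one copy that cancels against part of the $\dot M$ term (to make the $\tilde a - a$ difference) and one copy that survives as the standalone fourth line. I would also note in passing that all quantities are well defined on $[0,t]$ for any $t<T$ since $M$, $L$ are continuous there and $X^\circ$ has a strong solution by the lemma preceding Section~\ref{sec:behav}.
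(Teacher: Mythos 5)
Your proof takes exactly the paper's route: compute $\dot M(t)=M(t)L(t)\tilde a(t)L(t)^\T M(t)$ by differentiating $M(t)\Md(t)=I$, apply It\^o's formula to the quadratic form $z\mapsto z^\T M(t)z$, substitute the SDE for $Z_t$ from Lemma~\ref{lem:sde-Z}, and reorganise. There is, however, a bookkeeping slip in your final paragraph. A clean application of It\^o gives
\[
\dd\left(Z_t^\T M(t)Z_t\right)=Z_t^\T\dot M(t)Z_t\,\dd t+2\,Z_t^\T M(t)\,\dd Z_t+\tr\left(M(t)L(t)a_tL(t)^\T\right)\dd t,
\]
so once you ``multiply through by $\tfrac12$'' the trace contribution becomes $\tfrac12\tr\left(L(t)a_tL(t)^\T M(t)\right)\dd t$; it does \emph{not} ``match line five directly'' as you assert, but differs from the statement's final term by a factor of two. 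This is not a defect peculiar to your write-up: the lemma as printed, and the paper's own one-line proof of it, carry the same missing $\tfrac12$ on the trace term, so your derivation faithfully reproduces what the paper wrote while inheriting its slip. The discrepancy is immaterial for what follows (the trace term only enters the subsequent exponential-martingale and Gronwall estimates up to multiplicative constants), but it is worth being explicit about rather than letting it pass silently in the ``matching'' step.
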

\begin{proof}
By Ito's lemma
\[
	\frac12 \dd \left(Z_t^\T M(t) Z_t\right)  = \frac12 Z_t^\T \frac{\dd M(t)}{\dd t} Z_t \dd t  +
	Z_t^\T M(t) \dd Z_t   +
	\tr\left(L(t) a_t L(t)^\T M(t)\right)\dd t.  
\]
Next, substitute the SDE for $Z_t$ from lemma
\ref{lem:sde-Z} and use that
\[ \frac{\dd M(t)}{\dd t} = -M(t) \frac{\dd M(t)^{-1}}{\dd t} M(t) =M(t) L(t) \tilde{a}(t) L(t)^\T M(t).\]
The final equality follows from the fact that $\Md(t)=M(t)^{-1}$ satisfies the ordinary differential equation 
$\dd \Md(t)=- L(t) \tilde{a}(t) L(t)^\T\dd t.$
 The result follows upon reorganising terms. 
\end{proof}

Whereas in the uniformly elliptic case all elements of $Z_t$ and $M(t)$ behave in the same way as a function of $T-t$, this is not the case in the hypo-elliptic case. For this reason, we introduce a diagonal scaling matrix $\Delta(t)$. 
\begin{lem}\label{eq:U'PU}
Let $\Delta(t)$ be an invertible $m\times m$ diagonal matrix. If  $\ZD{t}$, $\LD(t)$ and $\MD(t)$ are as defined  in Equations \eqref{eq:defU} and \eqref{eq:def-tildeC-P} then 
\begin{equation}\label{eq;sde-quadratform-U}
\begin{split}
	\frac12 \dd &\left(\ZD{t}^\T \MD(t) \ZD{t}\right)  = \frac12 \ZD{t}^\T \MD(t) \LD(t) \left(\tilde{a}(t)-a_t\right) \LD(t)^\T \MD(t) \ZD{t} \dd t \\ & +
	\ZD{t}^\T \MD(t) \LD(t) \left(\tilde{b}_t-b_t\right) \dd t  \\& + \ZD{t}^\T \MD(t) \LD(t) \si_t \dd W_t -\frac12 \ZD{t}^\T \MD(t) \LD(t) a_t \LD(t)^\T \MD(t) \ZD{t} \dd t \\ & +
	\tr\left(\LD(t) a_t \LD(t)^\T \MD(t)\right)\dd t.  
\end{split}
\end{equation}
Moreover, 
\begin{equation}\label{eq:tilder-PC}
	 \tilde{r}(t,X^\circ_t)=\LD(t)^\T \MD(t)
\ZD{t} \end{equation} 
\end{lem}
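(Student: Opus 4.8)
The plan is to reduce the whole statement to Lemma~\ref{lem:ZMZ} by exploiting the fact that the quadratic form $\ZD{t}^\T \MD(t) \ZD{t}$ does not actually depend on the scaling matrix $\Delta(t)$. Indeed, since $\Delta(t)$ is diagonal and hence symmetric, and $\ZD{t}=\Delta(t) Z_t$, $\MD(t)=\Delta(t)^{-1} M(t) \Delta(t)^{-1}$ with $Z_t=v-\mu(t)-L(t)X^\circ_t$ as in Lemma~\ref{lem:sde-Z}, one has
\[
\ZD{t}^\T \MD(t) \ZD{t}= Z_t^\T \Delta(t)\, \Delta(t)^{-1} M(t) \Delta(t)^{-1}\, \Delta(t) Z_t = Z_t^\T M(t) Z_t .
\]
Consequently the left-hand side of \eqref{eq;sde-quadratform-U} is literally $\tfrac12 \dd\!\left(Z_t^\T M(t) Z_t\right)$, for which Lemma~\ref{lem:ZMZ} already supplies a closed expression. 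So no new stochastic calculus is needed; what remains is purely algebraic rewriting.

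The second step is to recast each of the five terms on the right-hand side of Lemma~\ref{lem:ZMZ} in terms of $\ZD{t}$, $\LD(t)$ and $\MD(t)$. I would do this by repeatedly inserting the identity as $\Delta(t)^{-1}\Delta(t)$ and using once more that $\Delta(t)^\T=\Delta(t)$. Concretely,
\[
Z_t^\T M(t) L(t) = Z_t^\T \Delta(t)\cdot \Delta(t)^{-1} M(t)\Delta(t)^{-1}\cdot \Delta(t) L(t) = \ZD{t}^\T \MD(t)\LD(t),
\]
and symmetrically $L(t)^\T M(t) Z_t = \LD(t)^\T\MD(t)\ZD{t}$; this takes care of the first four terms (the drift-matching term, the diffusivity-mismatch term, the martingale term and the pull-back term) verbatim. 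For the trace term one observes $\LD(t) a_t \LD(t)^\T \MD(t) = \Delta(t) L(t) a_t L(t)^\T M(t)\Delta(t)^{-1}$, so that by cyclicity of the trace $\tr\!\left(\LD(t) a_t \LD(t)^\T \MD(t)\right)=\tr\!\left(L(t)a_t L(t)^\T M(t)\right)$. Substituting these identities into the expression from Lemma~\ref{lem:ZMZ} yields exactly \eqref{eq;sde-quadratform-U}.

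Finally, \eqref{eq:tilder-PC} follows immediately: by Lemma~\ref{lem:form-rtilde} one has $\tilde{r}(t,X^\circ_t)=L(t)^\T M(t)\left(v-\mu(t)-L(t)X^\circ_t\right)=L(t)^\T M(t) Z_t$, and inserting $\Delta(t)^{-1}\Delta(t)$ as above gives $L(t)^\T M(t) Z_t=\LD(t)^\T\MD(t)\ZD{t}$.

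I do not expect a genuine obstacle here; the only point requiring care is the bookkeeping of the $\Delta(t)^{\pm 1}$ factors, together with the observation that it is precisely diagonality (hence symmetry) of $\Delta(t)$ that makes the transposes behave as claimed. Were $\Delta(t)$ merely invertible rather than diagonal, a transpose $\Delta(t)^\T$ would have to be carried wherever $\Delta(t)$ sits to the left of a transposed factor, and the statement would need to be adjusted accordingly.
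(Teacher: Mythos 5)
Your proposal is correct and takes essentially the same route as the paper, which simply states that the first identity is a ``straightforward consequence of Lemma~\ref{lem:ZMZ}'' and the second follows from equation~\eqref{eq:tilder}; you have filled in the $\Delta(t)^{\pm1}$-insertion bookkeeping that the paper leaves implicit. The observation that $\ZD{t}^\T\MD(t)\ZD{t}=Z_t^\T M(t) Z_t$, together with symmetry of $\Delta(t)$ and cyclicity of the trace, is exactly what makes that reduction work.
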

\begin{proof}
	This is a straightforward consequence of Lemma \ref{lem:ZMZ}. The expression for $\tilde{r}$ follows from equation \eqref{eq:tilder}.
\end{proof}

%-----------------------------
\subsection{Recap on notation and results}\label{subsec:recap}
 For clarity we summarise our notation, some of which was already defined in Section \ref{sec:notation}. The auxiliary process is defined by the SDE $\dd \tilde{X}_t = (\tilde{B}(t) \tilde{X}_t + \tilde\beta(t)) \dd t + \tilde\si(t) \dd W_t$. The matrix $\Phi(t)$ satisfies the ODE $\dd \Phi(t) =\tilde{B}(t) \Phi(t) \dd t$ and we set $\Phi(T,t)=\Phi(T) \Phi(t)^{-1}$. A realisation $v$ of $V=LX_T$ is observed. 
The scaled process is defined by 
\[\ZD{t}=\Delta(t) \left( v- \int_t^T L(\tau) \tilde\beta(\tau) \dd \tau-L(t) X^\circ_t  \right),\]
where
$ L(t)= L \Phi(T,t) \qquad \text{and} \qquad 
 \LD(t)=\Delta(t)L(t). $
 \sloppy Furthermore, we defined 
$  M(t)= \left(\int_t^T L(\tau) \tilde{a}(\tau) L(\tau)^\T \dd \tau\right)^{-1} \qquad \text{and}\qquad  \MD(t) = \Delta(t)^{-1} M(t) \Delta(t)^{-1},$
where $\tilde{a}(t)=\tilde\si(t) \tilde\si(t)^\T$. 
Finally, the guiding term in the SDE for the guided proposal $X^\circ_t$ is given by 
$a(t,X^\circ_t) \LD(t)^\T \MD(t) \ZD{t}$. 
The process $t\mapsto \ZD{t}$ is the key object to be studied in this section.
%-----------------------------

\subsection{Proof of Proposition \ref{thm:limitU}}\label{subsec:prooflimiting}

The line of proof is exactly as suggested in \cite{mao1992} (page 341):
\begin{enumerate}
  \item Start with the Lyapunov function $V(t,\ZD{t})=\tfrac12 \ZD{t}\T \MD(t) \ZD{t}$.
  \item Apply Ito's lemma to $V(t,\ZD{t})$.
  \item Use martingale inequalities to bound the stochastic integral.
  \item Apply a Gronwall type inequality.
\end{enumerate}

We bound all terms appearing in equation \eqref{eq;sde-quadratform-U}. Note that the first term on the right-hand-side vanishes. We start with the Wiener integral term. To this end, fix $t_0 \in [0,T)$ and 
 let 
\[ N_t= \int_{t_0}^t \ZD{s}^\T \MD(s) \LD(s) \si \dd W_s.\]
Then 
\[ \int_{t_0}^t \ZD{s}^\T \MD(s) \LD(s) \si_s \dd W_s -\frac12 \int_{t_0}^t \ZD{s}^\T \MD(s) \LD(s)\, a_s\, \LD(s)^\T \MD(s) \ZD{s} \dd s =N_t -\frac12[N]_t.\]
Now $N_t$ can be bounded using an exponential martingale inequality. 
Let $\{\ga_n\}$ be a sequence of positive numbers. Define for $n\in \NN$, $t_n=T-1/n$ and 
\[ E_n=\left\{ \sup_{0\le t \le t_{n+1}} \left( N_t- \frac{1}{2} [N]_t \right) > \ga_n\right\}. \]
By the exponential martingale inequality of Theorem 1.7.4 in \cite{Mao}, we obtain that $P(E_n) \le \e^{-\ga_n}$. 
If we assume $\sum_{n=1}^\infty \e^{-\ga_n}<\infty$, then by the Borel-Cantelli lemma 
$ P\left(\limsup_{n\to \infty} E_n\right)=0. $
Hence, for almost all $\omega$, $\exists\, n_0(\omega)$ such that for all $n\ge n_0(\omega)$
\begin{equation}\label{eq:martingale-partt} \sup_{t_0\le t \le t_{n+1}} \left( N_t -\frac{1}{2}\int_0^t [N]_t  \right)\le\ga_n. \end{equation}
Let $\eps>0$. Upon taking $\ga_n=(1+2\eps) \log n$ we get $\sum_{n=1}^\infty \e^{-\ga_n}=\sum_{n=1}^\infty n^{-1-2\eps} <\infty$.
Since $\MD(t)$ is strictly positive de\-fi\-nite
\[ \lmin(\MD(t)) \|\ZD{t}\|^2 \le  \ZD{t}^\T \MD(t) \ZD{t}. \]
Assume $t_0 < t< t_{n+1}$. Combining the inequality of the preceding display  with Lemma \ref{eq:U'PU} and substituting the bound in \eqref{eq:martingale-partt}, we obtain that for any $\eps>0$
\begin{align*} \frac12\lmin(\MD(t)) \|\ZD{t}\|^2 &\le \frac12 \ZD{t_0} \MD(t_0) \ZD{t_0}\\&+\int_{t_0}^t \|\ZD{s}\| \left\|  \MD(s)\right\| \left\| \LD(s) \left(\tilde{b}_s-b_s\right)\right\| \dd s \\ & \qquad +\ga_n +
\int_{t_0}^t \tr\left(\LD(s)\, a_s\, \LD(s)^\T  \MD(s)\right) \dd s \\ & \qquad +  \frac12 \int_{t_0}^t\ZD{s}^\T \MD(s) \LD(s) \left(\tilde{a}(s)-a_s\right) \LD(s)^\T \MD(s) \ZD{s} \dd s 
\end{align*}	
Recall that  for po\-sitive semi\-de\-finite matrices $A$ and $C$ we have $|\tr(AC)| \le \tr(A) \tr(C)\le \tr(A) p \la_{\rm max}(C)$, if $C \in \RR^{p\times p}$.
Hence, 
\begin{equation}\label{eq:boundtraceterm} \tr\left(\LD(s)\, a_s\, \LD(s)^\T  \MD(s)\right) \le 
\tr\left(\LD(s)\, a_s\, \LD(s)^\T\right)  m  \la_{\rm max}(\MD(s))
\end{equation}
Furthermore, as \begin{equation}\label{eq:boundnorm-by-lmax}
\|\MD(s)\|=\sqrt{ \la_{\rm max}(\MD(s)^2)} =  \la_{\rm max}(\MD(s))\end{equation} we can combine the preceding three inequalities to obtain
\begin{equation}\label{eq:start-gronwall-argument}
\begin{split}
 \frac12\lmin(\MD(t)) \|\ZD{t}\|^2 &\le \frac12 \ZD{t_0} \MD(t_0) \ZD{t_0}\\&+\int_{t_0}^t \|\ZD{s}\|  \la_{\rm max}(\MD(s)) \left\| \LD(s) \left(\tilde{b}_s-b_s\right)\right\| \dd s \\ & \qquad +\ga_n + m
\int_{t_0}^t \tr\left(\LD(s)\, a_s\, \LD(s)^\T\right)\la_{\rm max}(\MD(s))  \dd s\\ & \qquad +  \frac12 \int_{t_0}^t\ZD{s}^\T \MD(s) \LD(s) \left(\tilde{a}(s)-a_s\right) \LD(s)^\T \MD(s) \ZD{s} \dd s .
	\end{split}	
\end{equation}

Upon substituting the bounds in \eqref{eq:bounds-thm}
 we get, for certain positive constants $C_0$,  $C_1$, $C_2$, $C_3$ and $C_4$ that 
\begin{equation}\label{eq:bound-ZD}
\begin{split}	
 (T-t)^{-1} \|\ZD{t}\|^2 &\le C_0+ C_1 \int_{t_0}^t \|\ZD{s}\|  (T-s)^{-1} \dd s \\ \qquad &+C_2 \ga_n +
C_3 \int_{t_0}^t (T-s)^{-1} \dd  + C_4 \int_{t_0}^t \|\ZD{s}\|^2 (T-s)^{\alpha-2}.
\end{split}
\end{equation}
 If we define $\xi_t=(T-t)^{-1} \|\ZD{t}\|^2$, then this inequality can be rewritten as
\begin{equation}\label{eq:ineq-xi}
\begin{split} \xi_t &\le C_0 +C_2 \ga_n  + C_3 \log\left(\frac{T-t_0}{T-t}\right)  \\& \qquad +  C_1 \int_{t_0}^t   (T-s)^{-1/2} \sqrt{\xi_s}\dd s + C_4 \int_{t_0}^t (T-s)^{\alpha-1} \xi_s \dd s .
\end{split}
\end{equation}
By Lemma \ref{lem:gronwall2} in the appendix this implies
\begin{align*}
 \xi_t & \leq  \left( \sqrt{C_0 +C_2 \ga_n  + C_3 \log\left(\frac{T-t_0}{T-t}\right)} + \frac12 C_1 \left( \sqrt{T- t_0} - \sqrt{T-t} \right)\right)^2 \\ & \qquad \qquad \qquad  \times  \exp\left(C_4 \int_{t_0}^t (T-s)^{\alpha-1}\right).
 \end{align*}
Now divide both sides of this inequality by $\log(1/(T-t))$ and consider $t_n < t < t_{n+1}$. Then $\log n \le \log(1/(T-t))$.  It then follows that 
\[ \limsup_{t\uparrow T} \frac{\|\ZD{t}\|^2}{(T-t)\log(1/(T-t))} \le C_2(1+2\eps) +C_3.\]
Now let $\eps\downarrow 0$.

\subsection{Proof of Corollary \ref{cor:limit-behav-ue}}\label{subsec:proofcor}
 As $\Delta(t)=I_m$ it is easy to see that  $M(t)=\scr{O}(1/(T-t)$ and $\LD(t)=\scr{O}(1)$. This behaviour of $M(t)$ is also contained in the first inequality of Lemma 8 in \cite{Schauer} (note that in that paper, $\tilde{H}$ corresponds to $M$ as defined in this paper). Now it is easy to see that the conditions of theorem \ref{thm:limitU} are satisfied.

%%%%%%%%%%% 
%%%%%%%%%%%%%%%%%%%%%%%%%
\section{Absolute continuity with respect to the guided proposal distribution}\label{sec:absolute-continuity}

\subsection{Proof of Theorem \ref{thm:main}}

We start with a result that gives the Radon-Nikodym derivative of $\PP^\star_t$ relative to $\PP^\circ_t$ for $t<T$. 
\begin{prop}\label{prop-abscont-beforeT}
For $t<T$ we have
\begin{equation}\label{eq:LR-before-limit} \frac{\dd \PP_t^\star}{\dd \PP_t^\circ}(X^\circ) = \frac{\tilde \rho(0,x_0)}{ \rho(0,x_0)}
\frac{ \rho(t,X^\circ_t)}{\tilde \rho(t,X^\circ_t)} \Psi_t(X^\circ), \end{equation}
where $\Psi_t$ is defined in \eqref{eq:Psi}. 
\end{prop}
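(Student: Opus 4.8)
The plan is to obtain the Radon--Nikodym derivative $\dd\PP^\star_t/\dd\PP^\circ_t$ by writing both derivatives relative to the law $\PP_t$ of the unconditioned diffusion $X$ and taking the quotient. First I would record the Doob $h$-transform representation: since $X^\star$ solves \eqref{eq:xstar} with guiding drift $a(s,x)r(s,x)$ where $r=\nabla_x\log\rho$, the standard enlargement-of-filtration / $h$-transform argument (referenced in Section \ref{sec:Doob-h}) gives
\[ \frac{\dd\PP^\star_t}{\dd\PP_t}(X) = \frac{\rho(t,X_t)}{\rho(0,x_0)}, \]
since $\rho(s,X_s)$ is a $\PP$-martingale (this uses Assumption \ref{ass:p} so that $\rho$ is smooth and the Kolmogorov backward equation applies). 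This is the partially-observed analogue of the usual bridge $h$-transform; the only subtlety is that $\rho$ is an integrated transition density over $\ker L$, but it still satisfies the backward equation in $(t,x)$ and hence $\rho(t,X_t)$ is a local martingale, made a true martingale on $[0,t]$ with $t<T$ by the positivity and boundedness in Assumption \ref{ass:p}.

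Second, I would compute $\dd\PP^\circ_t/\dd\PP_t$ by Girsanov's theorem. By \eqref{eq:eta} the drift difference $b^\circ-b = a\tilde r = \si(\si^\T\tilde r)$ lies in the column space of $\si$, so with $\eta^\circ(s,x)=\si(s,x)^\T\tilde r(s,x)$ (or any measurable solution of $\si\eta^\circ = a\tilde r$) Girsanov gives
\[ \frac{\dd\PP^\circ_t}{\dd\PP_t}(X) = \exp\left(\int_0^t (\si_s^\T\tilde r_s)^\T \dd W_s - \tfrac12\int_0^t \|\si_s^\T\tilde r_s\|^2\dd s\right), \]
valid on $[0,t]$ for $t<T$ because $\tilde r(s,\cdot)$ is affine with continuous coefficients bounded on $[0,t]$ (Lemma \ref{lem:form-rtilde}), so Novikov's condition holds. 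Dividing, $\dd\PP^\star_t/\dd\PP^\circ_t = (\dd\PP^\star_t/\dd\PP_t)/(\dd\PP^\circ_t/\dd\PP_t)$, and evaluated along $X^\circ$ this becomes $\frac{\rho(t,X^\circ_t)}{\rho(0,x_0)}$ divided by the Girsanov exponential evaluated at $X^\circ$.

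Third — and this is the main computational step — I would show that the resulting expression equals the claimed formula, i.e. that
\[ \frac{\rho(t,X^\circ_t)}{\rho(0,x_0)} \Big/ \exp\left(\cdots\right)\Big|_{X^\circ} = \frac{\tilde\rho(0,x_0)}{\rho(0,x_0)}\frac{\rho(t,X^\circ_t)}{\tilde\rho(t,X^\circ_t)}\Psi_t(X^\circ). \]
After cancelling $\rho(t,X^\circ_t)/\rho(0,x_0)$ this reduces to identifying the Girsanov exponential with $\tilde\rho(t,X^\circ_t)/(\tilde\rho(0,x_0)\Psi_t(X^\circ))$. The natural route is to apply Itô's formula to $\log\tilde\rho(s,X^\circ_s)$ along the solution of \eqref{eq:prop-sde}: using that $\tilde\rho$ solves the backward Kolmogorov equation for the \emph{auxiliary} process $\tilde X$ (not for $X^\circ$), the drift terms produce exactly the discrepancy $\scr G(s,x)$ — the term $(b-\tilde b)^\T\tilde r$ arising from the mismatch in first-order coefficients and the trace term $-\tfrac12\tr((a-\tilde a)(\tilde H-\tilde r\tilde r^\T))$ from the mismatch in second-order coefficients, where $\tilde H = L^\T M L = -(\nabla^2\log\tilde\rho)$ for the Gaussian $\tilde\rho$. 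This is where I would invoke the explicit Gaussian form of $\tilde\rho$ from Lemma \ref{lem:form-rtilde} to get $\nabla_x\log\tilde\rho = \tilde r$ and $\nabla_x^2\log\tilde\rho = -\tilde H$ in closed form. Collecting the $\dd W$ terms reproduces the stochastic integral in the Girsanov exponent, and the bounded-variation terms assemble into $\int_0^t\scr G(s,X^\circ_s)\dd s = \log\Psi_t(X^\circ)$, completing the identification. The main obstacle is bookkeeping in this Itô expansion — correctly matching the second-order/trace contributions and confirming all local martingales are true martingales on $[0,t]$ for $t<T$ (which again follows from the affine structure of $\tilde r$ and continuity of $M,\mu$ on compacts away from $T$); the $t\uparrow T$ limit is deferred to the subsequent parts of Section \ref{sec:absolute-continuity} and is not needed here.
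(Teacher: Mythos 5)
Your outline matches the paper's approach: the paper's proof of this proposition reduces to a citation of Proposition 1 of \cite{Schauer} (which carries out precisely the $h$-transform/Girsanov/It\^o-on-$\log\tilde\rho$ calculation you describe), plus a verification of the one new ingredient in the partially-observed setting, namely that $\tilde\rho$ satisfies Kolmogorov's backward equation for $\tilde X$. You correctly single out this last fact as the crux of the It\^o bookkeeping but treat it as given, whereas the paper actually proves it --- by showing via the tower property that $\tilde\rho(t,\tilde X_t)$ is an $\tilde{\scr F}_t$-martingale and then invoking the characterisation of space-time harmonic functions (Revuz--Yor, Ch.~VII, Prop.~1.7) --- an argument that sidesteps differentiating the explicit formula for $\tilde\rho$, which is a degenerate (sub-dimensional) Gaussian when $m<d$.
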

\begin{proof}
 Although this result is not a special case of proposition 1 in \cite{Schauer} (where it is assumed that $L=I$ and that the diffusion is uniformly elliptic), the arguments for deriving the likelihood ratio of $\PP^\star_t$ with respect to $\PP^\circ_t$ are the same and therefore omitted. The only thing that needs to be checked is that  $\tilde\rho(t,x)$ satisfies the Kolmogorov backward equation  associated to $\tilde X$.
 This can be proved along the lines of Lemma 3.4 and Corollary 3.5 of \cite{vdm-schauer}. Let $\tilde{\scr{F}}_t =\si\left(\tilde{X}_s,\, 0\le s \le t\right)$ and set $\tilde{Y}_t =\tilde\rho(t,\tilde{X}_t)$. Now 
 \begin{align*}
 	\expec{\tilde{Y}_t \mid \tilde{\scr{F}}_s} & = \int_{\RR^d} \tilde\rho(t,x) \tilde{p}(s,\tilde{X}_s; t,x) \dd x \\ & = \int_{\RR^d} \tilde{p}(s,\tilde{X}_s; t,x) \int_{\RR^{d-m}} \left(\tilde{p}t,x; T, \sum_{j=1}^d \xi_j f_j\right) \dd \xi_{m+1},\cdots, \dd \xi_d \\ & = 
 	\int_{\RR^{d-m}} \tilde{p}\left(s,\tilde{X}_s; T, \sum_{j=1}^d \xi_j f_j\right)\dd \xi_{m+1},\cdots, \dd \xi_d = \tilde\rho(s,\tilde{X}_s)=\tilde{Y}_s.
 \end{align*}
 That is, $(\tilde{Y}_t, \tilde{\scr{F}}_t)$ is a martingale. If $\tilde{\scr{L}}$ denotes the infinitesimal generator of $\tilde{X}_t$, then $\scr{K}=\partial / (\partial t) + \tilde{\scr{L}}$ is the infinitesimal generator of the space time process $(t,\tilde{Y}_t)$. Since $\tilde{Y}_t$ is a martingale, the mapping $(t,x) \mapsto \tilde\rho(t,x)$ is space-time harmonic. Then by Proposition 1.7 in chapter VII of 
 \cite{RevuzYor} 
 $\scr{K} \tilde\rho(t,x)=0$. That is, $\tilde{\rho}(t,x)$ satisfies Kolmogorov's backward equation. 
\end{proof}

This absolute continuity result is only useful for simulating conditioned diffusions if it can be shown to hold in the limit $t\uparrow T$ as well. 
The main line of proof is  the same as in the proof of Theorem 1 in \cite{Schauer}, where at various places $p$ and $\tilde{p}$ need to be replaced with $\rho$ and $\tilde\rho$. However, some of the auxiliary results that are used  require new arguments in the present setting. Moreover, the assumed Aronson type bounds are not suitable for hypo-elliptic diffusions.

\subsection{Proof of Theorem \ref{thm:main}} \label{sec:mainproof-abscont}
We start with introducing some notation.
Define the mapping  $g_\Delta\colon [0,\infty) \times \RR^d \to \RR^m$ by 
\[ g_\Delta(t,x) = \Delta(t) \left( v-\mu(t) - L(t) x\right) \]
and note that $\ZD{t} = g_\Delta(t,X^\circ_t)$. 
 For a diffusion process $Y$ we define the stopping time
\[  \si_k(Y) = T \wedge \inf_{t\in [0,T]} \left\{ \|g_\Delta(t,Y_t)\| \ge k \sqrt{(T-t) \log(1/(T-t))}\right\}, \]
where $k\in \NN$.
We write
\[ \si_k^\circ = \si_k(X^\circ) \qquad \si_k = \si_k(X)\qquad \si_k^\star = \si_k(X^\star). \]

Define $\bar\rho = \tilde\rho(0,x_0)/\rho(0,x_0)$. 
By Proposition \ref{prop-abscont-beforeT} , for any $t<T$ and bounded, $\mathcal F_t$-measurable $f$, we have
\begin{equation}\label{likeliexp}
 \expec{ f(X^\star) \frac{ \tilde  \rho(t, X^\star_t)}{\rho(t, X^\star_t)}} =
\expec{ f(X^\circ)  \bar{\rho} \: \Psi_t(X^\circ)}.
\end{equation}
By taking $f_t(x)=\ind\{t\le \si_k(x)\}$, we get
\begin{equation}\label{eq:llsigma} \bar\rho\, \EE\left[ \Psi_t(X^\circ) \ind\{t\le \si^\circ_k\}\right] = \EE\left[ \frac{\tilde\rho(t,X^\star_t)}{\rho(t,X^\star_t)} \ind\{t\le \si^\star_k\}\right].
\end{equation}
Next, we take $\lim_{k\to\infty} \lim_{t\uparrow T}$ on both sides. We start with the left-hand-side. 
By  Lemma \ref{lem:boundlikelihood}, for each $k\in\NN$,  $\sup_{0\le t\le T}\Psi_t(X^\circ)$ is
uniformly  bounded  on the event $\{T=\si_k^\circ\}$. Hence, by the dominated convergence theorem we obtain 
\[	\lim_{k\to\infty} \lim_{t\uparrow T}\EE\left[ \Psi_t(X^\circ) \ind\{t\le \si^\circ_k\}\right] = \lim_{k\to \infty} \EE\left[ \Psi_T(X^\circ) \ind\{T\le \si^\circ_k\}\right]. \]
Since by definition $\si^\circ_k\le T$, we have $\{T\le \si^\circ_k\}=\{T= \si^\circ_k\}$. Furthermore, 
\[ \ind\{T=\si^\circ_k\} = \ind\left\{\|Z^\circ_{\Delta,t}\| \le k \sqrt{(T-t) \log(1/(T-t))} \right\} \uparrow 1 \quad \text{as}\quad  k \to \infty, \]
by Proposition \ref{thm:limitU}. Therefore, by monotone convergence
\[	\lim_{k\to\infty} \lim_{t\uparrow T}\EE\left[ \Psi_t(X^\circ) \ind\{t\le \si^\circ_k\}\right] =  \EE\left[ \Psi_T(X^\circ)\right]. \]
It remains to show that the right-hand-side of \eqref{eq:llsigma} tends to $1$. We write 
\begin{align*} \rho(0,x_0)  \EE\left[ \frac{\tilde\rho(t,X^\star_t)}{\rho(t,X^\star_t)} \ind\{t\le \si^\star_k\}\right] &= \EE\left[ \tilde\rho(t,X_t) \ind\{t\le \si_k\}\right] \\ &  =\EE\left[ \tilde\rho(t,X_t)\right] - \EE\left[ \tilde\rho(t,X_t) \ind\{t> \si_k\}\right]  
\end{align*}
By Lemma \ref{lem:tildepoverp} the first of the terms on the right-hand-side  tends to $\rho(0,x_0)$ when $t\uparrow T$. The second term tends to zero by Lemma \ref{lem:expec_greater_1}.

To complete the proof we note that by 
 equation (\ref{likeliexp})  and Lemma \ref{lem:tildepoverp} we have  $\bar \rho\, \expec{\Psi_t(X^\circ)} \to 1$ as $t\uparrow T$. 
 In view of the preceding and Scheff\'e's Lemma this implies that 
$\Psi_t(X^\circ) \to \Psi_T(X^\circ)$ in $L^1$-sense as $t \uparrow T$.  
Hence for $s<  T$ and a bounded,  $\mathcal{F}_s$-measurable, continuous functional $g$, 
\[
\expec{g(X^\circ) \bar \rho \Psi_T(X^\circ)} = \lim_{t \uparrow T} \expec{g(X^\star)
\frac{\tilde \rho(t, X^\star_{t})}{\rho(t, X^\star_{t})}}.
\]
By Lemma \ref{lem:tildepoverp} this converges to $\EE\, g (X^\star)$ as $t \uparrow T$ and we find that 
$\EE\, g(X^\circ) \bar \rho \Psi_T(X^\circ) = \EE\, g (X^\star)$.

%\subsection{Lemmas for verifying the limiting argument}

\begin{lem}\label{lem:boundlikelihood}
Under Assumption \ref{ass:sigma_const} there exists a positive constant $K$ (not depending on $k$) such that
		\[ \Psi_t(X^\circ) \ind_{t\le \si^\circ_m} \le \exp(Kk^2). \]
\end{lem}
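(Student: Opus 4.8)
The goal is to bound $\Psi_t(X^\circ) = \exp\bigl(\int_0^t \scr{G}(s,X^\circ_s)\dd s\bigr)$ on the event $\{t \le \si^\circ_k\}$, uniformly over $t\in[0,T]$ and with a constant that does not depend on $k$. Since the exponential is monotone it suffices to bound $\int_0^t \scr{G}(s,X^\circ_s)\dd s$ from above by $Kk^2$ on this event. Recalling
\[
\scr{G}(s,x) = (b(s,x)-\tilde b(s,x))^\T \tilde r(s,x) - \tfrac12 \tr\bigl([a(s,x)-\tilde a(s)][\tilde H(s) - \tilde r(s,x)\tilde r(s,x)^\T]\bigr),
\]
the plan is to rewrite everything in terms of the scaled quantities $\ZD{s}$, $\LD(s)$, $\MD(s)$ using the identity $\tilde r(s,X^\circ_s) = \LD(s)^\T \MD(s)\ZD{s}$ from Lemma \ref{eq:U'PU}, and then invoke the four inequalities of Assumption \ref{ass:sigma_const} together with the event bound $\|\ZD{s}\| \le k\sqrt{(T-s)\log(1/(T-s))}$ that holds for $s\le\si^\circ_k$.

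First I would treat the drift term. Write $(b_s-\tilde b_s)^\T\tilde r(s,X^\circ_s) = -(\tilde b_s - b_s)^\T \LD(s)^\T\MD(s)\ZD{s}$, so by Cauchy--Schwarz its absolute value is at most $\|\LD(s)(\tilde b_s - b_s)\|\cdot\|\MD(s)\ZD{s}\| \le c_1 \lmax(\MD(s))\|\ZD{s}\|$. Using the first inequality of \eqref{eq:bounds-thm}, $\lmax(\MD(s)) \le \overline c\,(T-s)^{-1}$, and the event bound on $\|\ZD{s}\|$, this is $\lesssim k\,(T-s)^{-1/2}\sqrt{\log(1/(T-s))}$, which is integrable on $[0,T]$; its integral is $\lesssim k$, hence $\lesssim k^2$ for $k\ge1$ (or simply absorbed). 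Next, the $\tilde H$ term: $\tilde H(s) = L(s)^\T M(s) L(s)$, so $\tr([a_s-\tilde a_s]\tilde H(s)) = \tr(\LD(s)^\T\MD(s)\LD(s)(a_s-\tilde a_s))$ after inserting $\Delta(s)^{-1}\Delta(s)$ factors; bounding via $|\tr(AC)|\le \tr(A)\,m\,\lmax(C)$-type estimates and using the third and fourth inequalities of \eqref{eq:bounds-thm} together with the first, one gets a bound like $c_2\cdot m\cdot\overline c\,(T-s)^{-1}$ — wait, this is \emph{not} integrable, so here I must be more careful and instead pair $\|a_s - \tilde a_s\|$-type control from the fourth inequality (which gives a factor $(T-s)^\alpha$) against $\|\LD(s)\cdot\LD(s)^\T\MD(s)\|$-type factors; the net power of $(T-s)$ should be $\alpha - 1 > -1$, hence integrable, contributing a constant independent of $k$.

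The genuinely $k$-dependent contribution comes from the quadratic-in-$\tilde r$ piece: $\tfrac12\tr([a_s-\tilde a_s]\tilde r\tilde r^\T) = \tfrac12 \tilde r^\T(a_s-\tilde a_s)\tilde r = \tfrac12\ZD{s}^\T\MD(s)\LD(s)(a_s-\tilde a_s)\LD(s)^\T\MD(s)\ZD{s}$. I would bound its absolute value by $\tfrac12\|\MD(s)\ZD{s}\|^2 \cdot \|\LD(s)(a_s-\tilde a_s)\LD(s)^\T\| \le \tfrac12 \lmax(\MD(s))^2\|\ZD{s}\|^2 \cdot c_3(T-s)^\alpha$ using the fourth inequality. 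With $\lmax(\MD(s))\le\overline c(T-s)^{-1}$ and $\|\ZD{s}\|^2\le k^2(T-s)\log(1/(T-s))$ on the event, this is $\lesssim k^2 (T-s)^{\alpha-1}\log(1/(T-s))$, which is integrable on $[0,T]$ for $\alpha>0$. Integrating yields a bound of the form $K'k^2$. Collecting the three contributions gives $\int_0^t\scr{G}(s,X^\circ_s)\dd s \le Kk^2$ on $\{t\le\si^\circ_k\}$, and exponentiating gives the claim.

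The main obstacle is bookkeeping the powers of $(T-s)$ so that every term is integrable near $s=T$: the $\tilde H$ term in particular must be handled by combining the \emph{smallness} from the fourth inequality (factor $(T-s)^\alpha$) with the singular factors, rather than using the third inequality alone, and one must make sure the $\ZD{s}$-dependence is only used where the event bound genuinely controls it (i.e. on $[\,t_0, T)$ from Assumption \ref{ass:sigma_const}), while on $[0,t_0]$ all quantities are continuous hence bounded by a $k$-free constant. A minor typo note: the statement writes $\si^\circ_m$ where it should read $\si^\circ_k$, matching the conclusion $\exp(Kk^2)$.
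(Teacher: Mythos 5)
Your proposal matches the paper's proof essentially line for line: rewrite $\scr{G}$ via $\tilde r(s,X^\circ_s)=\LD(s)^\T\MD(s)\ZD{s}$ and $\tilde H(s)=\LD(s)^\T\MD(s)\LD(s)$, invoke the event bound on $\|\ZD{s}\|$, and then treat the three terms exactly as you do — in particular, the paper also handles the $\tilde H$ term by pairing the fourth inequality's factor $(T-s)^\alpha$ against $\lmax(\MD(s))\lesssim(T-s)^{-1}$ to get an integrable, $k$-free contribution, and the quadratic-in-$\tilde r$ term gives the $K'k^2$ from $(T-s)^{\alpha-1}\log(1/(T-s))$. Your observation that $\si^\circ_m$ in the statement should read $\si^\circ_k$, and your remark that on $[0,t_0]$ one falls back on continuity rather than Assumption \ref{ass:sigma_const}, are both correct refinements that the paper leaves implicit.
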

\begin{proof}
To bound $\Psi_t(X^\circ)$, we will first rewrite $\scr{G}(s,X^\circ)$ in terms of $\ZD{t}$, $\LD(t)$ and $\MD(t)$, as defined in \eqref{eq:defU} and \eqref{eq:def-tildeC-P}. By display \eqref{eq:tilder-PC}, we have
\[ \tilde{r}(t,X^\circ_t)=\LD(t)^\T \MD(t)\ZD{t}  \quad \text{and} \quad \tilde{H}(t)=\LD(t)^\T \MD(t) \LD(t).\]
Here, the expression for $\tilde{H}(t)$ was obtained from  \[\tilde{H}(t)= -\DD L(t)^\T M(t) (v-\mu(t)-L(t)x)=\DD (L(t)^\T M(t) L(t) x)= L(t)^\T M(t) L(t).\] Hence,
\begin{equation}\label{eq:scrG-rewritten}
\begin{split}\scr{G}(s,X^\circ_s) &= (b(s,X^\circ_s) - \tilde b(s,X^\circ_s))^\T \LD(s)^\T \MD(s)\ZD{s}  \\ &  -  \frac12 \tr\left(\left[a_s - \tilde a(s)\right]  \LD(s)^\T \MD(s) \LD(s)\right)\\ &  +\frac12 \ZD{s}^\T \MD(s) \LD(s)  \left[a_s - \tilde a(s)\right] \LD(s)^\T \MD(s)\ZD{s} .
\end{split}
\end{equation}
On the event $\{t \le \si_k^\circ\}$ we have
\[ \|\ZD{t}\| \le k \sqrt{(T-t) \log(1/(T-t))}. \]
The absolute value of the first term of $\scr{G}$ can be bounded by
\begin{align*}
	 &  \left\| \MD(s)\right\| \left\| \LD(s)\left( \tilde{b}(s,X^\circ_s)-b(s,X^\circ_s) \right)\right\| \|\ZD{s}\| \\& \qquad  \le
  (T-s)^{-1} \|\ZD{s}\|  \\ &\qquad \le c_1 m  (T-s)^{-1/2} \sqrt{\log(1/(T-s))}.
\end{align*}
Here we bounded $\|\MD(t)\| \le \lmax(\MD(t))$, as in \eqref{eq:boundnorm-by-lmax}. 
The absolute value of twice the  second term of $\scr{G}$ can be bounded by 
\[ \tr\left(\LD(s) (a_s-\tilde{a}(s)) \LD(s)^\T \right)  k  \la_{\rm max}(\MD(s)), \]
just as in \eqref{eq:boundtraceterm}. As for a $p\times p$ matrix $A$ we have $\tr(A)\le p \lmax(A) = p \|A\|^2$ (recall we assume the spectral norm on matrices throughout), this can be bounded by 
\[ m\|\LD(s) (a_s-\tilde{a}(s)) \LD(s)^\T\|^2  m  \la_{\rm max}(\MD(s)) \le m^2 c_3 \bar{c}(T-s)^{2\alpha-1} \]
The absolute value of twice the  third term of $\scr{G}$ can be bounded by 
\begin{align*} & \|\ZD{s}\|^2 \|\MD(s)\|^2 \|\LD(s) (a(s)-\tilde{a}(s)) \LD(s)^\T\| \\& \qquad \le k^2 (T-s) \log(1/(T-s)) \bar{c}^2 (T-s)^{-2} c_3 (T-s)^\alpha\\& \qquad  \le k^2 \bar{c}^2 c_3 (T-s)^{\alpha-1} \log(1/(T-s)).
\end{align*}
We conclude that all three terms in $\scr{G}$ are integrable on $[0,T]$. 
	\end{proof}

\begin{lem}\label{lem:lim-ptilde}
For all bounded, continuous $f\colon [0,T] \times \RR^d \to \RR$ 
\[  \lim_{t\uparrow T} \int f(t,x) \tilde{p}(t,x; T,v) \dd x = f(T,v). \]
\end{lem}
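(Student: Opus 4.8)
The statement asserts that the transition density $\tilde{p}(t,\cdot\,;T,v)$, viewed as a sub-probability kernel in its third argument, converges weakly (tested against bounded continuous $f$) to the Dirac mass at $v$ as $t\uparrow T$. Since $\tilde{X}$ is a linear (Gaussian) process, the plan is to use the explicit representation of $\tilde{p}$ rather than any general short-time asymptotics. From the proof of Lemma \ref{lem:form-rtilde}, $\tilde{X}_T$ started from $\tilde{X}_t=x$ is Gaussian with mean $m(t,x)=\Phi(T,t)x+\int_t^T \Phi(T,s)\tilde\beta(s)\dd s$ and covariance $Q(t)=\int_t^T \Phi(T,s)\tilde{a}(s)\Phi(T,s)^\T\dd s$. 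As $t\uparrow T$ we have $\Phi(T,t)\to I$, so $m(t,x)\to x$ uniformly on compacts, and $Q(t)\to 0$ (with $Q(t)$ strictly positive definite for $t<T$ by Assumption \ref{ass:controll}). Thus $\tilde{p}(t,x;T,v)$ is the value at $v$ of the density of $N(m(t,x),Q(t))$.

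The cleanest route is to rewrite the integral $\int f(t,x)\tilde{p}(t,x;T,v)\dd x$ by the substitution that interprets it through the \emph{reverse} Gaussian kernel: since the Gaussian density is symmetric in the roles of its two spatial arguments up to a change of the linear map, I would instead substitute $x = m(t,\cdot)^{-1}(y)$, i.e. use that $\tilde{p}(t,x;T,v)\dd x$, as a measure in $x$ with $v$ fixed, is (after applying the affine bijection $x\mapsto m(t,x)=\Phi(T,t)x+\mu_\beta(t)$) proportional to a Gaussian measure in the transformed variable centred near $v$ with covariance shrinking to $0$. Concretely, writing $x=\Phi(T,t)^{-1}(y-\mu_\beta(t))$ with $\mu_\beta(t)=\int_t^T\Phi(T,s)\tilde\beta(s)\dd s$, the integral becomes
\[
\int f\!\left(t,\Phi(T,t)^{-1}(y-\mu_\beta(t))\right)\,\varphi_{Q(t)}(v-y)\,\dd y,
\]
where $\varphi_{Q(t)}$ is the centred Gaussian density with covariance $Q(t)$ and the Jacobian $|\Phi(T,t)|^{-1}$ has been absorbed using $|\Phi(T,t)|\,\tilde{p}(t,x;T,v)=\varphi_{Q(t)}(v-y)$ — this last identity being exactly the statement that $\tilde{X}_T\mid\tilde{X}_t=x$ is $N(m(t,x),Q(t))$. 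Now $\varphi_{Q(t)}(v-\cdot)$ is a probability density concentrating at $v$ as $t\uparrow T$, the argument $\Phi(T,t)^{-1}(y-\mu_\beta(t))\to v$ as $t\uparrow T$, $y\to v$ (using $\mu_\beta(t)\to 0$ and $\Phi(T,t)\to I$), and $f$ is bounded and continuous; a standard approximate-identity argument (split into a small ball around $v$ where continuity of $f$ controls the integrand, and its complement where the Gaussian mass vanishes) gives the limit $f(T,v)$.

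The main obstacle is bookkeeping rather than anything deep: one must be careful that the affine change of variables is genuinely a bijection (it is, since $\Phi(T,t)$ is invertible), that the Jacobian is handled correctly so that a genuine probability density appears, and that the dependence of $f$ on its first argument $t$ — which is simultaneously the variable tending to $T$ — is absorbed using joint continuity of $f$ on $[0,T]\times\RR^d$ together with the uniform-on-compacts convergence of the spatial argument. I would present the approximate-identity step as: given $\eps>0$ choose $\delta>0$ with $|f(s,z)-f(T,v)|<\eps$ whenever $|s-T|+\|z-v\|<\delta$; for $t$ close enough to $T$ the map $y\mapsto\Phi(T,t)^{-1}(y-\mu_\beta(t))$ sends $\{\|y-v\|<\delta/2\}$ into $\{\|z-v\|<\delta\}$, and $\int_{\|y-v\|\ge\delta/2}\varphi_{Q(t)}(v-y)\dd y\to 0$ since $\lmax(Q(t))\to 0$; combining these with $\|f\|_\infty<\infty$ yields $\big|\int f(t,x)\tilde p(t,x;T,v)\dd x - f(T,v)\big|\le\eps + 2\|f\|_\infty\cdot o(1)$, and letting $t\uparrow T$ then $\eps\downarrow 0$ finishes the proof.
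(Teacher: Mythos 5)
The paper gives no argument of its own here, referring to Lemma~7 of \cite{Schauer}; your proof spells out what must be essentially the same Gaussian approximate-identity computation, and the overall plan is sound. There is, however, a Jacobian slip in the change of variables. With $y=\Phi(T,t)x+\mu_\beta(t)$ one has $\dd x=|\Phi(T,t)|^{-1}\dd y$, and the correct Gaussian identity is simply $\tilde p(t,x;T,v)=\varphi_{Q(t)}(v-y)$ with \emph{no} extra determinant --- that is precisely the statement that $\tilde X_T\mid \tilde X_t=x$ is $N(m(t,x),Q(t))$. The transformed integral is therefore
\[
|\Phi(T,t)|^{-1}\int f\bigl(t,\Phi(T,t)^{-1}(y-\mu_\beta(t))\bigr)\,\varphi_{Q(t)}(v-y)\,\dd y,
\]
with a prefactor your display omits, and the identity ``$|\Phi(T,t)|\,\tilde p(t,x;T,v)=\varphi_{Q(t)}(v-y)$'' that you invoke to absorb it is not correct. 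Equivalently, $\int\tilde p(t,x;T,v)\dd x=|\Phi(T,t)|^{-1}$, so the kernel in $x$ is not a probability kernel --- its total mass only \emph{tends} to $1$.

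The slip is harmless: $|\Phi(T,t)|^{-1}\to 1$ as $t\uparrow T$, so your final estimate just acquires an additional vanishing term $(|\Phi(T,t)|^{-1}-1)f(T,v)$ and the limit is unchanged. Carry the prefactor explicitly and the rest of the argument --- the concentration bound $\int_{\|y-v\|\ge r}\varphi_{Q(t)}(v-y)\dd y\le \tr Q(t)/r^2\to 0$, joint continuity of $f$ via the $\eps$--$\delta$ split, and the uniform-on-compacts convergence $\Phi(T,t)^{-1}(y-\mu_\beta(t))\to y$ --- goes through as you describe.
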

\begin{proof}
The proof is just as in Lemma 7 of \cite{Schauer}. 
\end{proof}

\begin{lem}
\label{lem:tildepoverp}
If  Assumption \ref{ass:p<=tildep} holds true, $0<t_1 < t_2 < \dots < t_N< t < T$ and  $g \in C_b(\RR^{Nd})$, then
\[ \lim_{t \uparrow T}  \expec{g(X^\star_{t_1}, \dots, X^\star_{t_N}) \frac{\tilde \rho(t, X^\star_{t})}{ \rho(t, X^\star_{t})}}  = \expec{g(X^\star_{t_1}, \dots, X^\star_{t_N})}.\]
\end{lem}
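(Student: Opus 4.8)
The plan is to transfer the statement from the conditioned process $X^\star$ to the unconditioned process $X$ by means of the Doob $h$-transform, and then to exploit the Markov property of $X$ together with Assumption \ref{ass:p<=tildep} and Lemma \ref{lem:lim-ptilde}. Recall from Section \ref{sec:Doob-h} that, since $(s,x)\mapsto\rho(s,x)$ is space-time harmonic for $X$, we have $\dd\PP^\star_s/\dd\PP_s(X)=\rho(s,X_s)/\rho(0,x_0)$ for every $s<T$. Applying this first with $s=t$ to the $\mathcal F_t$-measurable functional $\omega\mapsto g(\omega_{t_1},\dots,\omega_{t_N})\tilde\rho(t,\omega_t)/\rho(t,\omega_t)$, and then with $s=t_N$ to $\omega\mapsto g(\omega_{t_1},\dots,\omega_{t_N})$, the assertion reduces (after multiplication by $\rho(0,x_0)$) to showing that
\[ \lim_{t\uparrow T}\expec{g(X_{t_1},\dots,X_{t_N})\,\tilde\rho(t,X_t)} = \expec{g(X_{t_1},\dots,X_{t_N})\,\rho(t_N,X_{t_N})}. \]
Finiteness of the left-hand expectation, needed for this reduction, follows from $\int p(0,x_0;t,x)\tilde\rho(t,x)\dd x\le C\int\tilde p(0,x_0;t,x)\tilde\rho(t,x)\dd x=C\tilde\rho(0,x_0)$, using Assumption \ref{ass:p<=tildep} and the fact, established in the proof of Proposition \ref{prop-abscont-beforeT}, that $\tilde\rho$ is space-time harmonic for $\tilde X$.

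Next, conditioning on $\mathcal F_{t_N}$ and using the Markov property of $X$, the left-hand side equals $\expec{g(X_{t_1},\dots,X_{t_N})\,\psi_t(X_{t_N})}$ with $\psi_t(y):=\int p(t_N,y;t,x)\tilde\rho(t,x)\dd x$. By Assumption \ref{ass:p<=tildep} and the Chapman--Kolmogorov equation for $\tilde X$, $\psi_t(y)\le C\int\tilde p(t_N,y;t,x)\tilde\rho(t,x)\dd x=C\tilde\rho(t_N,y)$ for all $t\in(t_N,T)$, and similarly $\expec{\tilde\rho(t_N,X_{t_N})}\le C\tilde\rho(0,x_0)<\infty$. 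Hence $|g(X_{t_1},\dots,X_{t_N})\psi_t(X_{t_N})|\le\|g\|_\infty\,C\,\tilde\rho(t_N,X_{t_N})$ is dominated by a fixed integrable random variable, so by dominated convergence it suffices to prove the pointwise convergence $\psi_t(y)\to\rho(t_N,y)$ as $t\uparrow T$; the displayed identity then follows, and tracing back through the $h$-transform identities completes the proof.

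For the pointwise convergence, write $\tilde\rho(t,x)=\int_{\RR^{d-m}}\tilde p(t,x;T,w_\xi)\dd\xi_{m+1}\cdots\dd\xi_d$ with $w_\xi=\sum_{i=1}^d\xi_i f_i$ and $\xi_1,\dots,\xi_m$ fixed by \eqref{eq:fix-first-mxis} (with the inner integral absent and $w_\xi=v$ when $m=d$). By Tonelli, $\psi_t(y)=\int_{\RR^{d-m}}\big(\int p(t_N,y;t,x)\tilde p(t,x;T,w_\xi)\dd x\big)\dd\xi_{m+1}\cdots\dd\xi_d$. For $t$ close to $T$, $(t,x)\mapsto p(t_N,y;t,x)$ is bounded and continuous by Assumption \ref{ass:p}, so Lemma \ref{lem:lim-ptilde} — which holds verbatim with $v$ replaced by any fixed endpoint $w_\xi$ — yields $\int p(t_N,y;t,x)\tilde p(t,x;T,w_\xi)\dd x\to p(t_N,y;T,w_\xi)$ for each $\xi$. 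Since this inner integral is dominated by $C\tilde p(t_N,y;T,w_\xi)$ with $\int_{\RR^{d-m}}\tilde p(t_N,y;T,w_\xi)\dd\xi_{m+1}\cdots\dd\xi_d=\tilde\rho(t_N,y)<\infty$, a second application of dominated convergence, now in the $\xi$-variables, gives $\psi_t(y)\to\int_{\RR^{d-m}}p(t_N,y;T,w_\xi)\dd\xi_{m+1}\cdots\dd\xi_d=\rho(t_N,y)$, as needed. I expect the main obstacle to be precisely this last part: in the partially observed case $m<d$ one must interchange the limit $t\uparrow T$ with the integral over the $d-m$ unobserved directions and invoke Lemma \ref{lem:lim-ptilde} at endpoints $w_\xi\neq v$, and it is the uniform domination supplied by Assumption \ref{ass:p<=tildep} that makes both interchanges legitimate.
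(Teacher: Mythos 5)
Your proof is correct and follows essentially the same route as the paper's: expand the expectation via the Doob $h$-transform (equivalently, via the product of transition densities as the paper does), reduce to the pointwise convergence of $\psi_t(y)=\int p(t_N,y;t,x)\tilde\rho(t,x)\,\dd x$ to $\rho(t_N,y)$ via Lemma \ref{lem:lim-ptilde}, and pass to the limit under the integral sign by domination from Assumption \ref{ass:p<=tildep}. If anything you supply a bit more detail than the paper in the final step — the explicit Tonelli decomposition in the $\xi$-variables and the second application of dominated convergence needed when $m<d$, which the paper leaves implicit in "By Lemma \ref{lem:lim-ptilde} it follows that $F(t;t_N,x_N)\to\rho(t_N,x_N)$."
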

\begin{proof}
The joint density $q$ of $(X_{t_1}, \dots, X_{t_N})$, conditional on $X_{t_0}=x_0$ is given by
$	q(x_1,\ldots, x_N) = \prod_{i=1}^N p(t_{i-1}, x_{i-1}; t_i, x_i) .$
Hence,
\begin{equation}\label{eq:limitlem}
\begin{split}
& \expec{g(X^\star_{t_1}, \dots, X^\star_{t_N}) \frac{\tilde \rho(t, X^\star_{t})}{ \rho(t, X^\star_{t})}} 
\\ & \qquad = \int g(x_1,\ldots,x_N) \frac{\tilde{ \rho}(t,x)}{ \rho(t,x)} q(x_1,\ldots,x_N) \frac{p(t_N,x_N; t,x)  \rho(t,x)}{ \rho(0,x_0)} \dd x_1\ldots \dd x_N \dd x\\  &  \qquad = \frac{1}{\rho(0,x_0)} \int g(x_1,\ldots,x_N) q(x_1,\ldots, x_N) F(t; t_N, x_N)  \dd x_1\ldots \dd x_N, 
\end{split}	
\end{equation}
where for $t_N<t<T$
\[	F(t; t_N, x_N)= 
 \int p(t_N, x_N; t,x) \tilde{\rho}(t,x) \dd x. \]
We can assume $t\ge (T+t_N)/2$. 
For fixed $t_N$ and $x_N$, the mapping $(t,x) \mapsto p(t_N, x_N; t,x)$ is continuous and bounded, for $t$ bounded away from $t_N$. By Lemma \ref{lem:lim-ptilde} it follows  that $F(t;t_N, x_N) \to \rho(t_N,x_N)$ when $t\uparrow T$. The argument is finished by taking the limit $t\uparrow T$ on both sides of equation \eqref{eq:limitlem}, interchanging limit and integral on the right-hand-side  and noting that the limit on the right-hand-side coincides with $\expec{g(X^\star_{t_1}, \dots, X^\star_{t_N})}$. 

The interchange is permitted by dominated convergence. To see this, first note that $g$ is assumed to be bounded.  Next, %if  $\eps = (t-t_N) \wedge \min\{t_i-t_{i-1},\, 1\le i\le N\}$, then 
%\[ \int \left(\prod_{i=1}^n p(t_{i-1}, x_{i-1}; t_i, x_i)\right) p(t_N,x_N; t,x) \tilde\rho(t,x) \dd x   \dd x_1\ldots \dd x_N \le C_{\eps}^{N+1} \tilde\rho(t_0,x_0), \]
\[ \int \left(\prod_{i=1}^n p(t_{i-1}, x_{i-1}; t_i, x_i)\right) p(t_N,x_N; t,x) \tilde\rho(t,x) \dd x   \dd x_1\ldots \dd x_N \le C^{N+1} \tilde\rho(t_0,x_0), \]
which follows from repeated application of Assumption \ref{ass:p<=tildep}.

\end{proof}

\begin{lem}\label{lem:expec_greater_1}
Assume that there exists a positive $\delta$ such that $|\Delta(t)| \lesssim (T-t)^{-\delta}$.
If Assumption \ref{ass:p<=tildep} holds true, then
\[ \lim_{k\to \infty}  \lim_{t \uparrow T}\expec{  \tilde\rho(t,X_t) \ind\{t> \si_k\}} .
\]
\end{lem}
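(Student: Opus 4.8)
The plan is to eliminate the parameter $t$ from the expectation by conditioning at the stopping time $\si_k$, and then to exploit that on the exit event $\{\si_k<T\}$ the value $\tilde\rho(\si_k,X_{\si_k})$ is super-exponentially small in $k$. First I would fix $t<T$ and condition on $\mathcal F_{\si_k}$. Since $\{t>\si_k\}=\{\si_k<t\}$ is $\mathcal F_{\si_k}$-measurable, the strong Markov property of $X$ gives
\[ \expec{\tilde\rho(t,X_t)\ind\{t>\si_k\}} = \expec{\ind\{\si_k<t\}\int_{\RR^d} p(\si_k,X_{\si_k};t,y)\,\tilde\rho(t,y)\dd y}. \]
On $\{\si_k<t\}$ one has $\si_k<t<T$, so Assumption \ref{ass:p<=tildep} yields $p(\si_k,X_{\si_k};t,y)\le C\tilde p(\si_k,X_{\si_k};t,y)$ for every $y$; since $\tilde\rho\ge0$ and $\int\tilde p(s,x;t,y)\tilde\rho(t,y)\dd y=\tilde\rho(s,x)$ for $s<t<T$ (the martingale identity for $\tilde\rho$ established in the proof of Proposition \ref{prop-abscont-beforeT}), the inner integral is at most $C\tilde\rho(\si_k,X_{\si_k})$. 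Hence
\[ \expec{\tilde\rho(t,X_t)\ind\{t>\si_k\}}\;\le\; C\,\expec{\tilde\rho(\si_k,X_{\si_k})\,\ind\{\si_k<T\}}, \]
and crucially the right-hand side no longer depends on $t$.

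Next I would establish a quantitative Gaussian upper bound for $\tilde\rho$. Because $\tilde X$ is linear, $L\tilde X_T$ given $\tilde X_t=x$ is Gaussian with mean $\mu(t)+L(t)x$ and covariance $\Md(t)$, so
\[ \tilde\rho(t,x)=(2\pi)^{-m/2}|\Md(t)|^{-1/2}\exp\!\Big(-\tfrac12\big(v-\mu(t)-L(t)x\big)^\T M(t)\big(v-\mu(t)-L(t)x\big)\Big). \]
Writing $v-\mu(t)-L(t)x=\Delta(t)^{-1}g_\Delta(t,x)$, the quadratic form equals $g_\Delta(t,x)^\T\MD(t)g_\Delta(t,x)\ge\lmin(\MD(t))\|g_\Delta(t,x)\|^2\ge\underline c\,(T-t)^{-1}\|g_\Delta(t,x)\|^2$ by the lower half of the first inequality in \eqref{eq:bounds-thm}, while $|\Md(t)|^{-1/2}=|\Delta(t)|\,|\MD(t)|^{1/2}\le|\Delta(t)|\,\lmax(\MD(t))^{m/2}\lesssim(T-t)^{-\delta-m/2}$ by the upper half of that inequality together with the hypothesis $|\Delta(t)|\lesssim(T-t)^{-\delta}$. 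Setting $\kappa=\delta+m/2$, there is thus a constant $C$ with
\[ \tilde\rho(t,x)\;\le\; C\,(T-t)^{-\kappa}\exp\!\Big(-\tfrac{\underline c}{2}(T-t)^{-1}\|g_\Delta(t,x)\|^2\Big),\qquad t<T,\ x\in\RR^d. \]

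Finally I would evaluate this at $(\si_k,X_{\si_k})$ on $\{\si_k<T\}$, where the path of $X$ has left the tube, so that (by continuity of $t\mapsto L(t),\mu(t),\Delta(t)$ and of the sample paths, the value at the first exit time dominates the radius) $\|g_\Delta(\si_k,X_{\si_k})\|^2\ge k^2(T-\si_k)\log(1/(T-\si_k))$. Substituting into the previous bound,
\[ \tilde\rho(\si_k,X_{\si_k})\,\ind\{\si_k<T\}\;\le\; C\,(T-\si_k)^{-\kappa}\exp\!\Big(-\tfrac{\underline c k^2}{2}\log\tfrac1{T-\si_k}\Big)\ind\{\si_k<T\}\;=\;C\,(T-\si_k)^{\underline c k^2/2-\kappa}\,\ind\{\si_k<T\}. \]
Assuming (without loss of generality, after a deterministic time change) that $T<1$, for every $k$ with $\underline c k^2/2>\kappa$ the right-hand side is bounded by the deterministic constant $C\,T^{\underline c k^2/2-\kappa}$. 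Taking expectations and combining with the two displays from the first step,
\[ \lim_{k\to\infty}\lim_{t\uparrow T}\expec{\tilde\rho(t,X_t)\ind\{t>\si_k\}}\;\le\;\lim_{k\to\infty}C^2\,T^{\underline c k^2/2-\kappa}=0, \]
which is the claim. I expect the Gaussian bound to be the crux: its exponential decay must be quantified finely enough in powers of $T-t$ so that, once the tube radius $k\sqrt{(T-t)\log(1/(T-t))}$ is inserted, the gain $(T-t)^{\underline c k^2/2}$ dominates the normalising blow-up $|\Md(t)|^{-1/2}\lesssim(T-t)^{-\delta-m/2}$ — which is exactly where $|\Delta(t)|\lesssim(T-t)^{-\delta}$ is used — and the strong-Markov reduction is what makes this usable, by collapsing the whole expectation onto the single random time $\si_k$ and removing the $t$-dependence; a secondary point to check carefully is that $\|g_\Delta(\si_k,X_{\si_k})\|$ genuinely attains (or exceeds) the tube radius at the first exit time.
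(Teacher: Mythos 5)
Your proof is correct and follows essentially the same route as the paper: reduce via the strong Markov property and Assumption~\ref{ass:p<=tildep} to bounding $C\,\EE[\tilde\rho(\si_k,X_{\si_k})\ind\{\si_k<T\}]$, write $\tilde\rho$ explicitly as an $m$-dimensional Gaussian density in $v-\mu(t)-L(t)x$ with covariance $\Md(t)$, and then combine the eigenvalue bounds on $\MD$ from Assumption~\ref{ass:sigma_const} and the hypothesis $|\Delta(t)|\lesssim(T-t)^{-\delta}$ with the tube-boundary value $\|g_\Delta(\si_k,X_{\si_k})\|= k\eta(\si_k)$ to get a bound of order $(T-\si_k)^{\underline c k^2/2-\kappa}$ that vanishes as $k\to\infty$. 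The only cosmetic difference is that you evaluate $\tilde\rho$ directly at the stopping time and invoke the strong Markov property explicitly, whereas the paper bounds by $\sup_{(t,y)\in\scr{A}_k}\tilde\rho(t,y)$ over the boundary set and defers the first reduction to Lemma~5 of \cite{Schauer}; both arguments implicitly restrict attention to $T-t$ small (equivalently take $T<1$ after rescaling), as you note.
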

\begin{proof}
As in the proof of Lemma 5 in \cite{Schauer}, it suffices to show that 
\begin{equation}\label{eq:tobeshown}   \lim_{k\to \infty} \lim_{t \uparrow T} \expec{ \ind_{\{t >\sigma_k\}}    \int p(\sigma_k,X_{\sigma_k}; t,z) \tilde \rho(t ,z) \dd z } =0 .\end{equation}
Applying Assumption \ref{ass:p<=tildep} and using the Chapman-Kolmogorov relations, we obtain \[  \int p(\sigma_k,X_{\sigma_k}; t,z) \tilde \rho(t ,z) \dd z \le C \tilde\rho(\sigma_k,X_{\sigma_k}).
\]
Define $\tilde{Z}_t=v-\mu(t) - L(t) \tilde{X}_t$. If we denote its transition density by $\tilde{q}$, then \[  \tilde\rho(t,y) = \tilde{q}(t,v-\mu(t)-L(t) y; T, 0), \qquad y \in \RR^d \quad \text{and}\quad t\in [0,T),
 \]
since $\tilde{r}(t,x)$ depends on $x$ only via $L(t)x$.
Define the set
\[ \scr{A}_k = \{(t,y) \in [0,T) \times \RR^d\,:\, \|\Delta(t)(v-\mu(t)-L(t)y)\| = k \eta(t)\}, 
\] where $\eta(t) = \sqrt{(T-t)\log(1/(T-t))}$. Then 
\[ \expec{ \ind_{\{t >\sigma_k\}}    \int p(\sigma_k,X_{\sigma_k}; t,z) \tilde \rho(t ,z) \dd z } \le \expec{\sup_{(t,y) \in \scr{A}_k}\tilde\rho(t,y)},   \]
since by definition of $\si_k$, $\|\Delta(\si_k)(v-\mu(\si_k)-L(\si_k)X_{\si_k})\| = k \eta(\si_k)$. The expectation on the right-hand-side is now superfluous. 
It is easily derived that $\tilde{Z}_t$ satisfies the SDE
\[ \dd \tilde{Z}_t = L(t)\tilde\si(t) \dd W_t \]
and hence  for $x\in \RR^m$
\[ \tilde{q}(t,x; T,0) = \phi_m(0; x, \Md(t)), \]
where we denote the density of the multivariate normal distribution in $\RR^m$ with mean vector $\nu$ and covariance matrix $\Upsilon$, evaluated in $u$ by $\phi_m(u;\nu,\Upsilon)$.  
Hence, stitching the previous derivations together we obtain
\[	\expec{ \ind_{\{t >\sigma_k\}}    \int p(\sigma_k,X_{\sigma_k}; t,z) \tilde \rho(t ,z) \dd z }  \le   \sup_{(t,y) \in \scr{A}_k} \phi_m(0; x, \Md(t)). \]
The right-hand-side, multiplied with $(2\pi)^{m/2}$ equals \[ |M(t)|^{1/2} \exp\left( -\frac12(v-\mu(t)-L(t)y)^\T M(t)(v-\mu(t)-L(t)y)\right)\]
which can be further bounded by
\begin{align*}
&  \sup_{(t,y) \in \scr{A}_k}  |\Delta(t)| |\MD(t)|^{1/2}%\\&
\exp\left( -\frac12(v-\mu(t)-L(t)y)^\T \Delta(t) \MD(t) \Delta(t) (v-\mu(t)-L(t)y)\right)\\ 
&\quad \le \sup_{(t,y) \in \scr{A}_k}  |\Delta(t)| ( \lmax(\MD(t))^{m/2} \exp\left(-\frac12\|\Delta(t)(v-\mu(t)-L(t)y))\|^2 \lmin(\MD(t))\right)\\ 
& \quad\le \sup_{t\in [0,T)} |\Delta(t)| \left(\frac{\bar{c}}{T-t}\right)^{m/2} \exp\left(-\frac{\underline{c} k^2\eta(t)^2}{2(T-t)}\right)\\ 
& \quad \lesssim \sup_{t\in [0,T)}  (T-t)^{-\delta-m/2} \exp\left(-\frac{\underline{c} k^2\eta(t)^2}{2(T-t)}\right). 
 \end{align*}
Next, the maximum can be bounded, followed by taking the limit
$k\to \infty$, to see that this tends to zero. This is exactly as in the proof of Lemma 5 in \cite{Schauer}.

\end{proof}

\subsection{Proof of Lemma \ref{lem:verif-p<=tildep}}
By absolute continuity of the laws of $\tilde X$ and $X$ and the abstract Bayes' formula, for bounded $\scr{F}_T$-measurable $f$ we have
\[ \expec{f(X) \mid X_T=v} = \frac{\tilde{p}(0,x_0; T,v)}{p(0,x_0; T,v)} \expec{f(\tilde{X}) \frac{\dd \PP_T}{\dd \tilde{\PP}_T}(\tilde{X} ) \;\middle|\; \tilde{X}_T=v}. \]
Hence, upon taking $f\equiv 1$ and applying Girsanov's theorem we get
\[ p(0,x_0; T,v) = \tilde{p}(0,x_0; T,v) \expec{\exp\left(\int_0^T \eta(\tilde{X}_s)^\T \dd W_s - \frac12 \int_0^T\|\eta(\tilde{X}_s)\|^2 \dd s \right) \middle| \tilde{X}_T=v}.\]
Since $\eta$ is bounded this implies
\[ p(0,x_0; T,v) \propto \tilde{p}(0,x_0; T,v) \expec{\exp\left(\int_0^T \eta(\tilde{X}_s)' \dd W_s  \right) \middle| \tilde{X}_T=v}.\]
Upon defining $\tau_j =\int_0^T \eta_j(\tilde{X}_s)^2 \dd s $, the Dambis-Dubins-Schwarz theorem implies that the expectation on the right-hand-side equals 
\[ \expec{\e^{-\sum_{j=1}^{d'} \int_0^T \eta_j(\tilde{X}_s) \dd W_s^j} \;\middle|\; \tilde{X}_T=v} =
	\expec{\e^{\sum_{j=1}^{d'} {W}_{\tau_j}^j} \;\middle|\;   \tilde{X}_T=v} .\]
By boundedness of $\eta$ there exists constants $\{K_j\}_{j=1}^{d'}$ such that   $\tau_j\le K_j$. Hence the right-hand-side of the preceding display can be bounded by 
	\[	\expec{\e^{\sum_{j=1}^{d'} \sup_{0\le s \le K_j} 
{W}_s^j } \;\middle|\; \tilde{X}_T=v}  =	\expec{\e^{\sum_{j=1}^{d'} \sup_{0\le s \le K_j} {W}_s^j}} = \prod_{j=1}^{d'}  \expec{\e^{\sup_{0\le s \le K_j} {W}_s^j}},
 \]
 where the final equality follows from the components of $\bar{W}$ being independent. The expectation on the right-hand-side is finite, the constant only depending on $T$. To see this: if $B_t$ is a one-dimensional Brownian motion, then $\bar{B}_t =\sup_{0\le s\le t} B_s$ has density $f_{\bar{B}_t}(x)=\sqrt{2/(\pi t)} \e^{-x^2/(2t)}\ind_{[0,\infty)}(x)$, which implies that $\expec{\exp(\bar{B}_t)}<\infty$. 

The statement of the theorem now follows by considering the processes $X$ and $\tilde{X}$ started in $x$ at time $s$ and noting that the  derived constant only depends on $T$.

\section{Discussion}\label{sec:discussion}

\subsection{Extending the approach in \cite{Marchand} to hypo-elliptic diffusions.}

 A potential advantage of the approach in \cite{Marchand} is that, at least in the uniformly elliptic case, there is no matching condition for the diffusion coefficient to be satisfied.  Inspecting the guiding term in \eqref{eq:pull-marchand},  it can be seen  that it is also well defined  when ${\ker }(\si(t,x)^\T L^\T)=\{0\}$, since this assures that the inverse  of $La(t,x) L^\T$ exists for all $t\ge 0$ and $x\in \RR^d$. Unfortunately, this excludes for example the case where the smooth component of an integrated diffusion process is observed (Example \ref{ex:integrated-diffusion-partial1}). Here, the guiding term is given by 
\[ {\rm guid}_1(t,x):= a L(t)^\T \left(\int_t^T L(\tau) \tilde{a} L(\tau)^\T \dd \tau\right)^{-1} (v-L(t)x). \]
   Now it is tempting to adjust the proposals by \cite{Marchand} in Equation \eqref{eq:pull-marchand} in the same way as was done for guided proposals, by replacing $L$ by $L(t)$. This  leads to the guiding term
\[ {\rm guid}_2(t,x):=a L(t)^\T (L(t) a L(t)^\T)^{-1} \frac{v-L(t)x}{T-t}. \]
This guiding term  will  {\it not} give correct bridges though. To see this, if $\underline\beta\equiv 0$ then $X^\circ=X^\star$ but 
\begin{equation}\label{eq:pull1-pull2} {\rm guid}_2(t,x)=\frac13 {\rm guid}_1(t,x) \quad \text{with} \quad  {\rm guid}_1(t,x)=\Bm 0 \\ 3\frac{v-x_1-(T-t)x_2}{(T-t)^2}\Em \end{equation}
(here $x_i$ denotes the $i$-th component of the vector $x$).  We stress that ${\rm guid}_2(t,x)$ has never been proposed in \cite{Marchand} and that the guiding term in Equation \eqref{eq:pull-marchand} is perfectly valid in the uniformly elliptic case. The point we make here, is that it is far from straightforward to generalise \cite{Marchand}'s work to the hypo-elliptic setting. Possibly, the correct generalisation of \cite{Marchand}'s work to the hypo-elliptic case is to take the guiding term of the form
\[ a(t,X^\circ_t) L(t)^\T \left(\int_t^T L(\tau) a(\tau,X^\circ_\tau) L(\tau)^\T \dd \tau\right)^{-1} (v-L(t)X^\circ_t). \] This term is however  unattractive from a computational point of view.

\subsection{State-dependent diffusion coefficient}
We have formulated our results for  state-dependent diffusion coefficients $\si$.  The main difficulty however resides in checking the fourth inequality of Assumption \ref{ass:sigma_const}. We conjecture that the ``right'' way to deal with this term is to bound 
\[ \| \LD(t) (\tilde{a}(t)-a(t,X^\circ_t)) \LD(t)^\T\| \lesssim \|\ZD{t}\|. \]
Then the final term in inequality \eqref{eq:bound-ZD}
would be replaced with $C_4 \int_{t_0}^t (T-s)^{-2} \|\ZD{s}\|^3 \dd s$. The conjecture is motivated by the  proof of Theorem 2 in \cite{Schauer}. Obtaining such an inequality is not straightforward, as is the corresponding Gronwall type argument. We postpone such investigations to future research.

\section*{Acknowledgement}
We thank O.\ Papaspiliopoulos (Universitat Pompeu Fabra Barcelona),    S.\ Sommer (University of Copenhagen) and M.\ Mider (University of Warwick) for stimulating discussions on diffusion bridge simulation.

\appendix

\section{Existence of $\tilde{r}$ if $L=I$}\label{sec:app-control}
In case $L=I$, the existence problem of transition densities has  been studied in control theory as well. 
\begin{defn}
The pair  $(\tilde{B}, \tilde{\si})$ is called  {\it completely controllable} at $s$ if, for any $t>s$ and $x, y \in \RR^d$, there exists a function $v \in L^2[s,t]$ and corresponding solution $Y$ of  
\[ d Y(u) = \left( \tilde{B}(u) Y(u) + \tilde{\si}(u) v(u) \right) d u, \qquad Y(s)=x\] such that $Y(t)=y$.
\end{defn}
The following lemma is proved in  \cite{Hermes-LaSalle}.
\begin{lem}
The following are equivalent:	
\begin{enumerate}
\item $(\tilde{B},\tilde\si)$ is completely controllable at $s$;
\item Non degenerate gaussian transition densities $\tilde{p}(s,x; t,y)$ exist;
\item For arbitrary gaussian initial data $\tilde{X}_s$, the random vector $\tilde{X}_t$ is non degenerate Gaussian for $t>s$. 
\end{enumerate}
\end{lem}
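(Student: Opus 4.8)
The plan is to reduce all three statements to a single algebraic condition on the controllability Gramian
\[
  Q(s,t) \;=\; \int_s^t \Phi(t,u)\,\tilde\si(u)\tilde\si(u)^\T\,\Phi(t,u)^\T \,\dd u ,
\]
namely that $Q(s,t)$ is strictly positive definite for every $t>s$. First I would recall, as already used in the proof of Lemma~\ref{lem:form-rtilde} (see \cite{LiptserShiryayevI}, Theorem~4.10), that the solution of \eqref{eq:auxlin} started at $\tilde X_s=x$ is
\[
  \tilde X_t = \Phi(t,s)x + \int_s^t \Phi(t,u)\tilde\beta(u)\,\dd u + \int_s^t \Phi(t,u)\tilde\si(u)\,\dd W_u ,
\]
so that, conditionally on $\tilde X_s=x$, the vector $\tilde X_t$ is Gaussian with mean $\Phi(t,s)x+\int_s^t\Phi(t,u)\tilde\beta(u)\,\dd u$ and covariance $Q(s,t)$. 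Since a Gaussian law on $\RR^d$ possesses a Lebesgue density if and only if its covariance is nonsingular (in which case the density is automatically $C^\infty$ and strictly positive), statement~(2) is precisely the assertion that $Q(s,t)\succ 0$ for all $t>s$.

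For the equivalence with~(1) I would invoke the standard description of the reachable set of the control system $\dot Y(u)=\tilde B(u)Y(u)+\tilde\si(u)v(u)$, $Y(s)=x$. By variation of constants $Y(t)=\Phi(t,s)x+\Lambda_{s,t}v$, where $\Lambda_{s,t}\colon L^2[s,t]\to\RR^d$ is the bounded operator $v\mapsto\int_s^t\Phi(t,u)\tilde\si(u)v(u)\,\dd u$, whose Hilbert-space adjoint sends $\xi\in\RR^d$ to the function $u\mapsto\tilde\si(u)^\T\Phi(t,u)^\T\xi$; hence $\Lambda_{s,t}\Lambda_{s,t}^{*}=Q(s,t)$ and $\im\Lambda_{s,t}=\im Q(s,t)$. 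Consequently the set of states reachable at time $t$ is $\Phi(t,s)x+\im Q(s,t)$, which equals $\RR^d$ for every $x$ exactly when $Q(s,t)$ is invertible. To make the implication ``$Q(s,t)\succ 0\Rightarrow$ controllable'' fully explicit one can exhibit the steering control $v(u)=\tilde\si(u)^\T\Phi(t,u)^\T Q(s,t)^{-1}\bigl(y-\Phi(t,s)x\bigr)$ and verify $Y(t)=y$ by substitution; for the converse, if $\xi\neq 0$ lies in $\ker Q(s,t)$ then $\xi^\T\Phi(t,u)\tilde\si(u)=0$ for a.e.\ $u\in[s,t]$, so every reachable state lies in the fixed hyperplane $\{\,y:\xi^\T y=\xi^\T\Phi(t,s)x\,\}$, contradicting controllability.

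Finally, for~(3): if $\tilde X_s$ is Gaussian with covariance matrix $\Sigma$ (possibly degenerate) then $\tilde X_t$ is Gaussian with covariance $\Phi(t,s)\Sigma\Phi(t,s)^\T+Q(s,t)$; since $\Phi(t,s)\Sigma\Phi(t,s)^\T\succeq 0$, this is nonsingular for \emph{every} admissible $\Sigma$ if and only if $Q(s,t)\succ 0$ (necessity: take $\Sigma=0$, i.e.\ a deterministic initial value, which is a degenerate Gaussian). This closes the cycle $(1)\Leftrightarrow(2)\Leftrightarrow(3)$. The only points that require genuine care are bookkeeping ones: the quantifier ``for all $t>s$'' must be carried through each equivalence, and one should record the identity $Q(s,t)=\Phi(t,t_0)Q(s,t_0)\Phi(t,t_0)^\T+Q(t_0,t)$ for $s<t_0<t$, which shows nonsingularity of the Gramian propagates forward in $t$, so the three conditions are genuinely conditions ``at~$s$''. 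I do not anticipate a substantial obstacle here — the content is classical linear-systems theory — beyond carefully matching the $L^2$-control formulation in the definition with the finite-dimensional Gramian criterion.
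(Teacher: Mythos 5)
Your proof is correct. The paper itself supplies no argument here: it states the equivalence and refers the reader to Hermes--LaSalle, so there is no ``paper's proof'' to compare against, and your write-up is exactly the self-contained version of the classical linear-systems argument that the reference would give. The reduction of all three statements to positive definiteness of the Gramian $Q(s,t)=\int_s^t\Phi(t,u)\tilde\sigma(u)\tilde\sigma(u)^\T\Phi(t,u)^\T\dd u$ is the right pivot: the transition law of $\tilde X_t$ given $\tilde X_s=x$ has covariance $Q(s,t)$, so (2) is literally $Q(s,t)\succ 0$; the reachable set of the control system is the affine translate $\Phi(t,s)x+\im Q(s,t)$, so (1) is $Q(s,t)\succ 0$; and a Gaussian initial law with covariance $\Sigma\succeq 0$ yields covariance $\Phi(t,s)\Sigma\Phi(t,s)^\T+Q(s,t)$, which is nonsingular for \emph{every} such $\Sigma$ (including the degenerate $\Sigma=0$) iff $Q(s,t)\succ 0$. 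Two points you handle that are easy to slip on and worth keeping: (i) the identity $\im\Lambda_{s,t}=\im(\Lambda_{s,t}\Lambda_{s,t}^{*})$ needs the codomain to be finite-dimensional (ranges closed), which you implicitly rely on but then sidestep cleanly by writing down the explicit steering control and verifying it by substitution; and (ii) the propagation identity $Q(s,t)=\Phi(t,t_0)Q(s,t_0)\Phi(t,t_0)^\T+Q(t_0,t)$ together with invertibility of $\Phi$ is what makes ``for all $t>s$'' consistent across the three formulations. One caveat about wording, not about your argument: item~(3) is only equivalent to the other two if ``arbitrary Gaussian'' is read to include degenerate Gaussians (point masses); for a strictly nondegenerate $\Sigma\succ 0$, the resulting covariance is automatically nonsingular because $\Phi(t,s)$ is invertible, so (3) would be vacuous. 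You make this reading explicit by taking $\Sigma=0$, which is the correct interpretation and the one needed for the lemma to be a genuine equivalence.
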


If  $\tilde{B}$, $\tilde\si$ in \eqref{eq:auxlin} are constant matrices, complete controllability is equivalent to $\mbox{rank}(C)=d$, where the  controllability matrix $C$ is defined by 
\[ C:= [\tilde\si,\: \tilde{B}\tilde\si,\ldots, \tilde{B}^{d-1} \tilde
\si ]\]
(see page 74 in \cite{Hermes-LaSalle} or proposition 6.5 in \cite{Karatzas-Shreve}). 
This provides an easily verifiable condition for complete controllability.

\section{Gronwall type inequality}

In the proof of Theorem \ref{thm:limitU} we used the following Gronwall type inequality:

\begin{lem}\label{lem:gronwall2}
Assume $t\mapsto \zeta(t)$ is continuously differentiable and nonnegative on $[t_0,t_1)$. Assume $t\mapsto f_1(t)$ and $t\mapsto f_2(t)$ are continuous and nonnegative on $[t_0,t_1)$. Suppose $t\mapsto u(t)$ is a continuous and nonnegative function on $[t_0, t_1)$ satisfying the inequality
\[ u(t) \le \zeta(t)+ \int_{t_0}^t f_1(s) \sqrt{u(s)} \dd s +\int_{t_0}^t f_2(s) u(s) \dd s  , \qquad t\in [t_0, t_1). \]
 Then
\[ u(t) \le  \left(\sqrt{\zeta(t_0) + \int_{t_0}^t |\zeta'(s)| \dd s}+ \frac12\int_{t_0}^t f_1(s) \dd s \right)^2 \exp\left(\int_{t_0}^t f_2(s) \dd s\right). \]
\end{lem}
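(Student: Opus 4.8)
The plan is to reduce the mixed $\sqrt{u}$-and-$u$ integral inequality to a standard linear Gronwall inequality by a substitution. First I would introduce the nondecreasing function
\[
w(t) = \zeta(t_0) + \int_{t_0}^t |\zeta'(s)|\dd s + \int_{t_0}^t f_1(s)\sqrt{u(s)}\dd s + \int_{t_0}^t f_2(s) u(s)\dd s,
\]
so that $u(t)\le w(t)$ on $[t_0,t_1)$ (using $\zeta(t)\le \zeta(t_0)+\int_{t_0}^t|\zeta'(s)|\dd s$), and $w$ is continuously differentiable with $w(t_0)=\zeta(t_0)$. Differentiating,
\[
w'(t) = |\zeta'(t)| + f_1(t)\sqrt{u(t)} + f_2(t) u(t) \le |\zeta'(t)| + f_1(t)\sqrt{w(t)} + f_2(t) w(t),
\]
where the inequality uses $u\le w$ and monotonicity of $x\mapsto\sqrt x$ and $x\mapsto x$ together with nonnegativity of $f_1,f_2$.

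The key step is then to set $\phi(t) = \sqrt{w(t)}$. Since $w$ is nonnegative, continuously differentiable, and (as is easily checked) bounded below by the positive-or-zero quantity $\zeta(t_0)$, one has $2\phi(t)\phi'(t) = w'(t)$ wherever $w>0$; the borderline case $w\equiv 0$ on a subinterval is trivial since then $u\equiv0$ there. Dividing the differential inequality for $w'$ by $2\phi(t) = 2\sqrt{w(t)}$ gives
\[
\phi'(t) \le \frac{|\zeta'(t)|}{2\sqrt{w(t)}} + \frac12 f_1(t) + \frac12 f_2(t)\sqrt{w(t)} \le \frac{|\zeta'(t)|}{2\sqrt{w(t)}} + \frac12 f_1(t) + \frac12 f_2(t)\phi(t).
\]
To handle the first term cleanly I would instead argue directly at the level of $w$: from $w' \le |\zeta'| + f_1\sqrt w + f_2 w$ and $\sqrt{w}\ge \sqrt{\zeta(t_0)}$ is not quite what is needed; rather, I note that $\frac{\dd}{\dd t}\bigl(\sqrt{w(t)}\bigr) = \frac{w'(t)}{2\sqrt{w(t)}}$, and the contribution of the $|\zeta'|$ term is bounded by observing that $w(t)\ge \zeta(t_0)+\int_{t_0}^t|\zeta'(s)|\dd s =: W_0(t)$, so $\frac{|\zeta'(t)|}{2\sqrt{w(t)}} \le \frac{|\zeta'(t)|}{2\sqrt{W_0(t)}} = \frac{\dd}{\dd t}\sqrt{W_0(t)}$. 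Hence
\[
\frac{\dd}{\dd t}\sqrt{w(t)} \le \frac{\dd}{\dd t}\sqrt{W_0(t)} + \frac12 f_1(t) + \frac12 f_2(t)\sqrt{w(t)}.
\]

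Finally I would apply the classical (linear, scalar) Gronwall inequality to $\psi(t) := \sqrt{w(t)}$, which satisfies $\psi(t) \le \psi(t_0) + \bigl(\sqrt{W_0(t)} - \sqrt{W_0(t_0)}\bigr) + \frac12\int_{t_0}^t f_1(s)\dd s + \frac12\int_{t_0}^t f_2(s)\psi(s)\dd s$ with $\psi(t_0)=\sqrt{\zeta(t_0)}=\sqrt{W_0(t_0)}$, so the non-integral part is exactly $\sqrt{W_0(t)} + \frac12\int_{t_0}^t f_1(s)\dd s = \sqrt{\zeta(t_0)+\int_{t_0}^t|\zeta'(s)|\dd s} + \frac12\int_{t_0}^t f_1(s)\dd s$. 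Gronwall then yields
\[
\sqrt{w(t)} \le \left(\sqrt{\zeta(t_0)+\int_{t_0}^t|\zeta'(s)|\dd s} + \frac12\int_{t_0}^t f_1(s)\dd s\right)\exp\left(\frac12\int_{t_0}^t f_2(s)\dd s\right),
\]
and squaring, together with $u(t)\le w(t)$, gives the claimed bound. The only delicate point is the division by $\sqrt{w(t)}$ when $w$ can vanish; I expect this to be the main obstacle, and it is handled either by the observation that $w(t)\equiv 0$ forces $u\equiv 0$ on that set (so the bound holds trivially), or by first replacing $\zeta$ with $\zeta+\epsilon$, running the argument, and letting $\epsilon\downarrow 0$ at the end.
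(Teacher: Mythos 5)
Your proof is correct, and it takes a genuinely different (and more self-contained) route than the paper. The paper disposes of Lemma~\ref{lem:gronwall2} in one line by invoking Theorem~2.1 of \cite{AGARWAL2005}, a general nonlinear Gronwall result of Bihari type, simply identifying the parameters $n=2$, $w_1(x)=\sqrt x$, $W_1(x)=2\sqrt x$, $w_2(x)=x$, $W_2(x)=\log x$. You instead give a direct elementary derivation: majorize $u$ by the nondecreasing $C^1$ function
\[
w(t)=\zeta(t_0)+\int_{t_0}^t|\zeta'(s)|\dd s+\int_{t_0}^t f_1(s)\sqrt{u(s)}\dd s+\int_{t_0}^t f_2(s)u(s)\dd s,
\]
differentiate, divide by $2\sqrt{w}$, absorb the $|\zeta'|/(2\sqrt{w})$ term using $w\ge W_0:=\zeta(t_0)+\int_{t_0}^\cdot|\zeta'|$ so that it is dominated by $\frac{\dd}{\dd t}\sqrt{W_0(t)}$, and then apply the classical linear Gronwall inequality to $\sqrt{w}$ with nondecreasing forcing. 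Squaring recovers exactly the claimed bound, including the factor $\tfrac12$ in front of $\int f_1$ and the exponential of $\int f_2$ (not $\tfrac12\int f_2$, since the $\tfrac12$ comes out after squaring). Your handling of the degenerate case $w(t)=0$ is also sound: monotonicity of $w$ forces $u\equiv 0$ on $[t_0,t]$, so the bound holds trivially, or alternatively one perturbs $\zeta\mapsto\zeta+\epsilon$ and sends $\epsilon\downarrow0$. What each approach buys: the paper's is shorter but opaque to a reader who does not have \cite{AGARWAL2005} at hand; yours is longer but makes the mechanism (why the square root appears and why the exponential factor carries $\int f_2$ rather than $\tfrac12\int f_2$) completely transparent, and it would be a reasonable replacement if one wanted the paper to be self-contained.
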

\begin{proof}
	This is a special case of theorem 2.1 in \cite{AGARWAL2005}. In their notation, we have $n=2$, $w_1(x)=\sqrt{x}$, $W_1(x)=2\sqrt{x}$ (taking  $u_1=0$), $w_2(x)=x$, $W_2(x)=\log x$ (taking $u_2=1$). 
\end{proof}

%%%%%

%%%%%%%%%%
\section{Hypoellipticity}
\label{sec:hormander}
\begin{prop}
\label{prop:hormander}
Consider the diffusion~\eqref{eq:sde}
with
$ b(t,x) = B x + \beta(t,x)$ for $B \in \RR^{d \times d}$ and $\beta \in C^{\infty}([0,T] \times \R^d, \R^d)$, and with $\sigma \in C^{\infty}([0,T] \times \R^d, \R^{d \times d'})$.
Suppose that, for all $(t,x) \in (0,T) \times \R^d$, the pair  $(B, \sigma(t,x))$ is controllable, i.e. the rank of the matrix concatenation
\[ \begin{bmatrix} \sigma(t,x) & B \sigma(t,x) & \dots & B^{d-1} \sigma(t,x) \end{bmatrix}\] 
is equal to $d$. Furthermore suppose that for all $(t,x) \in (0,T) \times \R^d$ and all tuples $(n_0, n_1, \dots, n_d) \in \{0, 1, \dots, d-1\}^{d+1}$,
\[  (\partial_{t})^{n_0} \prod_{i=1}^d (\partial_{x_i})^{n_i} \beta(t,x) \in \Col \sigma(t,x)\] and 
\[ \Col \left( (\partial_{t})^{n_0} \prod_{i=1}^d (\partial_{x_i})^{n_i} \sigma(t,x) \right) \subset \Col \sigma(t,x),\] i.e. the column spaces of all partial derivatives of $\beta(t,x)$ and $\sigma(t,x)$, including $\beta(t,x)$ itself, belong to the column space of $\sigma(t,x)$.
Finally suppose there exists at most one strong solution to~\eqref{eq:sde} (which is the case if e.g. $\beta$ and $\sigma$ satisfy a linear growth condition).
Then for all initial conditions $x_0$ and all $t \geq 0$, the distribution of $X_t$ admits a density function $p(t,x,y)$:
\[ \EE_{x_0}[f(X_t)] = \int_{\R^d} p(t,x_0,y) f(y) \, d y, \quad f \in C_0(\R^d),\]
and $p$ is a smooth (infinitely often continuously differentiable) function on $(0,\infty) \times \R^d \times \R^d$.
\end{prop}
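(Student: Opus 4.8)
The plan is to verify Hörmander's hypoellipticity condition for the time--inhomogeneous diffusion generated by \eqref{eq:sde} and then invoke Hörmander's theorem in the form quoted in the text, see \cite{Williams1981}, together with the standard refinement that the resulting transition kernel is smooth jointly in all of its arguments.

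As a preliminary step I would pass to Stratonovich form. The law of $X_t$ is then governed by the diffusion vector fields $A_j(t,x)=\sum_i\sigma_{ij}(t,x)\,\partial_{x_i}$, $j=1,\dots,d'$ (the columns of $\sigma$), together with the drift field $A_0(t,x)=\partial_t+\sum_i\hat b_i(t,x)\,\partial_{x_i}$, where $\hat b=b-\tfrac12\sum_j(A_j\cdot\nabla)A_j=Bx+\hat\beta$ and $\hat\beta=\beta-\tfrac12\sum_j(A_j\cdot\nabla)A_j$. The Stratonovich correction is a combination of first directional derivatives of columns of $\sigma$, so for any $w\in\R^d$ the map $(w\cdot\nabla)\hat\beta$ is a combination of $\partial_{x_i}\beta$ and of second derivatives of $\sigma$, all of which lie in $\Col\sigma(t,x)$ by hypothesis; in other words $\hat\beta$ satisfies exactly the same structural assumption as $\beta$, and I may work with $\hat\beta$ throughout.

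The core of the proof is a Lie bracket computation. For a field $Y=\sum_l y_l(t,x)\,\partial_{x_l}$ without a $\partial_t$--component one has $[A_0,Y]=\sum_l\big((\partial_t+\hat b\cdot\nabla)y_l-(y\cdot\nabla)\hat b_l\big)\,\partial_{x_l}$. Taking $Y=A_j$ and using $\hat b=Bx+\hat\beta$, the spatial part of $[A_0,A_j]$ is $(\partial_t+\hat b\cdot\nabla)\sigma_{\cdot j}-B\sigma_{\cdot j}-(\sigma_{\cdot j}\cdot\nabla)\hat\beta$. Since $(\partial_t+\hat b\cdot\nabla)\sigma_{\cdot j}$ is a linear combination of $\partial_t\sigma$ and $\partial_{x_i}\sigma$ columns, and $(\sigma_{\cdot j}\cdot\nabla)\hat\beta$ is a combination of $\partial_{x_i}\hat\beta$, the hypotheses give
\[ [A_0,A_j]\equiv -B\sigma_{\cdot j}\pmod{\Col\sigma(t,x)}. \]
Set $Y_j^{(0)}=A_j$ and $Y_j^{(k)}=[A_0,Y_j^{(k-1)}]$. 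An induction on $k$ then shows that the spatial part of $Y_j^{(k)}$ equals $(-1)^kB^k\sigma_{\cdot j}$ modulo $\Col\sigma+B\,\Col\sigma+\dots+B^{k-1}\Col\sigma$: indeed, at each step the new ``error'' terms arise by applying $\partial_t$ or a directional derivative to an $\R^d$--valued map with values in $\Col\sigma$, or a directional derivative to $\hat\beta$, and by the assumption on partial derivatives of $\beta$ and $\sigma$ — which is stated for enough orders (up to $d-1$ in each of the $d+1$ variables) to carry the induction through $k=d-1$ — these stay inside $\Col\sigma$. Hence the Lie algebra generated at $(t,x)$ by $A_1,\dots,A_{d'}$ and the brackets $Y_j^{(k)}$ contains every vector $B^k\sigma_{\cdot j}(t,x)$ with $0\le k\le d-1$; by the assumed controllability of $(B,\sigma(t,x))$, i.e.\ $\rank[\sigma,\,B\sigma,\,\dots,\,B^{d-1}\sigma](t,x)=d$, these span $\R^d$. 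Thus Hörmander's condition for the density of $X_t$ holds at every $(t,x)\in(0,T)\times\R^d$.

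Given the assumed solvability of \eqref{eq:sde} (so that the law of $X_t$ is well defined), Hörmander's theorem now yields that this law admits a density $p(t,x_0,\cdot)\in C^\infty(\R^d)$ for every $t>0$; since $\rho$ is an integral of $p$ over an affine subspace, $\rho$ is $C^\infty$ in its spatial argument as well. Because the bracket condition holds at every base point, the same argument applied to the adjoint (backward) operator, or the Malliavin-calculus version of Hörmander's theorem in the spirit of Kusuoka--Stroock, gives that $p$ is in fact $C^\infty$ in $(t,x_0,y)$ jointly, which is the asserted smoothness in the initial condition. The one genuinely delicate point is the bookkeeping in the induction: one must check that at each bracketing step the contributions coming from $\partial_t$, from $\hat\beta$, and from derivatives of $\sigma$ of growing order all remain inside $\Col\sigma(t,x)$, and this is exactly what the somewhat elaborate hypothesis on the partial derivatives of $\beta$ and $\sigma$ is designed to guarantee; the remaining steps are standard.
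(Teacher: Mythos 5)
Your proof follows essentially the same route as the paper's: pass to Stratonovich form, observe that the corrected drift's nonlinear part still lies in $\Col\sigma$, compute iterated Lie brackets of the diffusion fields with the drift field to extract $B^k\sigma_{\cdot j}$ modulo $\Col\sigma + B\Col\sigma + \cdots$, invoke controllability of $(B,\sigma)$ to get spanning, and then apply H\"ormander's theorem for the space--time process. The only cosmetic differences are the sign convention on the bracket and that the paper explicitly records that adjoining $\mathcal A_0$ yields dimension $d+1$ on $(0,T)\times\R^d$, which you leave implicit in the phrase ``H\"ormander's condition holds.''
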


\begin{proof} 
Write $(\sigma_j)_{j=1}^{d'}$ for the columns of $\sigma$ so that
\[ \sigma(t,x) = \begin{bmatrix} \sigma_1(t,x) & \dots & \sigma_{d'}(t,x) \end{bmatrix}.\]
The Stratonovich form of~\eqref{eq:sde} is given by
\[ \dd X_t = \widetilde b(t,X_t) \dd t + \si(t,X_t) \circ \dd W_t,\qquad X_0=x_0,\qquad t\in [0,T],\] 
where
$\widetilde b(t,x) = B x + \widetilde \beta(t,x)$ with coordinates of $\widetilde \beta$ given by \[ \widetilde \beta^i(t,x) = \beta^i(t,x) - \tfrac 1 2 \sum_{j=1}^{d'} \sum_{l=1}^d \sigma_j^l (\partial_l \sigma^i_j)(t,x).\]
Observe that $\widetilde \beta(t,x) \in \Col \sigma(t,x)$, just like $\beta(t,x)$.

In particular, the generator of the diffusion~\eqref{eq:sde} can be given in terms of the first order differential operators
\[ \mathcal A_0 f(t,x) = \partial_t f(t,x) + \langle \widetilde b(t,x), \nabla_x f(t,x) \rangle, \quad \mathcal A_j f(t,x) = \langle \sigma_j(t,x), \nabla_x f(t,x) \rangle, \quad j = 1, \dots, d', \]
as $\mathcal L = \mathcal A_0 + \tfrac 1 2 \sum_{j=1}^{d'} \mathcal A_j^2$. In this proof we will use  without further comment (i) Einstein's summation convention and (ii) the canonical identification of first order partial differential operators $\mathcal A = a^i \partial_i = a^0(t,x) \partial_t + \sum_{i=1}^d a^i(x) \partial_{x_i}$ (acting on functions $f : [0,\infty) \times \R^d \rightarrow \R$) with vector fields $\begin{bmatrix} a^0(t,x) & a^1(t,x) & \dots & a^d(t,x) \end{bmatrix}^T \in C^{\infty} ([0,\infty) \times \R^d;\R \times \R^d)$ without further comment. 
The commutator $[\mathcal U_1, \mathcal U_2]$ of two vector fields $\mathcal U_1$, $\mathcal U_2$ is as usual defined by 
\[ [ \mathcal U_1, \mathcal U_2] f(t,x) = \mathcal U_1 \mathcal U_2 f(t,x) - \mathcal U_2 \mathcal U_1 f(t,x).\]
For $l = 0, \dots, d-1$, write 
\[ V^l :=\Col \begin{bmatrix} \sigma(t,x) & B \sigma(t,x) & \dots & B^l \sigma(t,x) \end{bmatrix}.\]
Write $[\cdot, \mathcal A_0]^l$ for taking the Lie bracket with $\mathcal A_0$ repeatedly, i.e. recursively we define 
\[  [\mathcal U,\mathcal A_0]^0 f = \mathcal U f \quad \mbox{and} \quad [\mathcal U, \mathcal A_0]^{l+1} = \left[ [\mathcal U, \mathcal A_0]^l, \mathcal A_0 \right], \quad l=0,1,2, \dots.\]
We first compute
\begin{align*} [\mathcal A_j, \mathcal A_0] f & = \sigma_j^k B_k^i \partial_i f -(\partial_t \sigma_j^k)\partial_k f - B^i_l x^l (\partial_i \sigma_j^k) \partial_k f   - \widetilde \beta^i(\partial_i \sigma_j^k)\partial_k f + \sigma_j^k (\partial_k \widetilde \beta^i) \partial_i f.
\end{align*}
Observe that the first term represents the operator $\langle B \sigma_j, \nabla f\rangle$, and the remaining terms assume values in $V^0 = \Col \sigma(t,x)$.
By iterating we obtain $[\mathcal A_j, \mathcal A_0]^l = B^{l} \sigma + \mathcal U$, where $\mathcal U(t,x) \in V^{l-1}$ for all $(t,x)$.
By the controllability assumption on $B$ and $\sigma$, the vectors
\[ \{ [\mathcal A_j, \mathcal A_0]^l(t,x) : l = 1, \dots, d-1,  j = 1, \dots, d' \}  \]
span $\R^d$ for all $(t,x)$. Adding $\mathcal A_0$ to the collection of vectors gives that
\[ \operatorname{span} \{ \mathcal A_0, [\mathcal A_j, \mathcal A_0]^l(t,x) : l = 1, \dots, d-1, j = 1, \dots, d' \}  \]
has dimension $d+1$, for all $(t,x) \in (0,T) \times \R^d$.
The result now follows from H\"ormander's theorem lifted to $(0,T) \times \R^d$, e.g. \cite[Corollary 5.8]{Williams1981}.
\end{proof}

%%% 
\section{Derivation of the conditioned process}\label{sec:Doob-h}
The SDE for the conditioned process, given in \eqref{eq:xstar}, can be derived using Doob's $h$-transform. 
\begin{ass}\label{ass:smooth-rho}
The mapping $ \rho \colon \RR_+ \times \RR^d \to \RR$ is $C^{1,2}$ and strictly positive. 
\end{ass}
Suppose  $0\le s<t<T$. By the Chapman-Kolmogorov equations, for a compactly supported $C^\infty$-function $f\colon \RR^d \to \RR$ we have
\[ \EE\left[ f(X_t) \mid X_s=x, LX_T=v\right] = \int f(y) p(s,x; t,y) \frac{\rho(t,y)}{\rho(s,x)}\dd y. \]
Define $g(t,x)=f(x) \rho(t,x)$. Using the preceding display we find that the infinitesimal generator of the conditioned process, say $\scr{L}^\star$, equals
\begin{align*} \scr{L}^\star f(x)&=\lim_{\Delta \downarrow 0} \Delta^{-1}\left( \EE\left[ f(X_{s+\Delta}) \mid X_s=x, LX_T=v\right] - f(x) \right)\\&=\frac1{\rho(s,x)} \lim_{\Delta \downarrow 0} \Delta^{-1}\left( \int g(s+\Delta,y) p(s,x;s+\Delta,y) \dd y -g(s,x)
\right)\\&=\frac1{\rho(s,x)} \lim_{\Delta \downarrow 0} \Delta^{-1}\left( \EE \left[ g(s+\Delta,X_{s+\Delta}) \mid X_s=x\right] -g(s,x)
\right).
\end{align*}
By Assumption \ref{ass:smooth-rho}, $g$ is a compactly supported $C^\infty$-function in the domain of the infinitesimal generator  $\scr{K}$ of the space -time process $(t,X_t)$. Therefore,
\[ \scr{L}^\star f(x) = \frac1{\rho(s,x)} \left(\scr{K}g\right)(s,x), \] 
where 
\[ \scr{K} \phi(s,x)= \frac{\partial}{\partial s} \phi(s,x) + \sum_i b_i(s,x) \DD_i \phi(s,x) +\frac12 \sum_{i,j} a_{ij}(s,x) \DD^2_{ij} \phi(s,x). \]
Here (and in the following) all summations run over $1,\ldots, d$, $\DD_i=\frac{\partial}{\partial x_i}$ and 
$\DD^2_{ij}=\frac{\partial^2}{\partial x_i\partial x_j}$. 
Using the definition of $g$ we get 
\begin{align*}
\scr{L}^\star f(x)&=  \sum_i\left(b_i(s,x) + \sum_j a_{ij}(s,x) \frac{\DD_j \rho(s,x)}{\rho(s,x)}\right) \DD_i f(x) \\ & \quad + \frac12 \sum_{i,j} a_{ij}(s,x) \DD^2_{ij} f(x)+\frac{f(x)}{\rho(s,x)} \left(\scr{K}\rho\right)(s,x) . 
\end{align*}
We claim $\left(\scr{K}\rho\right)(s,x)=0$ (that is, $\rho(t,x)$ satisfies Kolmogorov's backward equation). The drift and diffusion coefficients of the conditioned process can then be identified from the infinitesimal generator $\scr{L}^\star$. 
To verify the claim, first note that $Z_t=\rho(t,X_t)$ defines a martingale: if $\scr{F}_s$ is the natural filtration of $X$, then
\[
	\EE [Z_t \mid \scr{F}_s]  = \int p(s,X_s; t,x) \rho(t,x) \dd x =Z_s
\]
where we used the Chapman-Kolmogorov equations. Therefore, $(t,x)\mapsto \rho(t,x)$ is space-time harmonic and then the claim follows from  proposition 1.7 of Chapter VII in \cite{RevuzYor}.

\bibliographystyle{harry}

\bibliography{lit}

\end{document}